\definecolor{Myblue}{rgb}{0,0,0.6}
\newtheorem{theorem}{Theorem}[section]
\newtheorem{proposition}[theorem]{Proposition}
\newtheorem{lemma}[theorem]{Lemma}
\newtheorem{corollary}[theorem]{Corollary}
\theoremstyle{definition}
\newtheorem{definition}[theorem]{Definition}
\newtheorem{example}[theorem]{Example}
\newtheorem{remark}[theorem]{Remark}
\numberwithin{equation}{section}
\def\res{\operatorname{Res}}
\def\inc{\operatorname{inc}}
\def\Im{\operatorname{Im}}
\def\can{\operatorname{can}}
\def\K{\mathbf{K}}
\def\Hom{\operatorname{Hom}}
\def\Grmodd{\operatorname{GrMod}}
\DeclareMathOperator{\hmf}{hmf}
\DeclareMathOperator{\HMF}{HMF}
\DeclareMathOperator{\HF}{HF}
\DeclareMathOperator{\At}{At}
\begin{document}

% Commands
\def\Res{\res\!}
\newcommand{\cat}[1]{\mathcal{#1}}
\newcommand{\lto}{\longrightarrow}
\newcommand{\xlto}[1]{\stackrel{#1}\lto}
\newcommand{\mf}[1]{\mathfrak{#1}}
\newcommand{\md}[1]{\mathscr{#1}}
\newcommand{\intvar}{\bs{x}_{\textup{int}}}
\newcommand{\extvar}{\bs{x}_{\textup{ext}}}
\newcommand{\qderu}[2]{\mathbf{D}^{#1}(#2)}
\newcommand{\ud}{\mathrm{d}}
\def\l{\,|\,}
\def\cf{\boldsymbol{cf}}
\def\bx{\boldsymbol{x}}
\def\by{\boldsymbol{y}}
\def\ba{\boldsymbol{a}}
\def\bb{\boldsymbol{b}}
\def\totimes{\otimes}
\def\di{Q}
\newcommand{\cotimes}[1]{\,\widehat{\otimes}_{#1}\,}
\def\QQ{\mathds{Q}}
\def\krc{C}
\def\diffm{d}
\def\diffh{d_{\chi}}
\def\redh{\overline{H}}
\def\ZZ{\mathds{Z}}
\def\bs{\boldsymbol}
\def\Ztwo{\mathds{Z}_2}
\def\mdual{^{\vee}}
\def\KR{\operatorname{KR}}
\def\I{\!\operatorname{i}\!}
\def\E{\operatorname{e}\!}
\def\sln{\mathfrak{sl}(N)}
\def\nN{\mathds{N}}
\def\nZ{\mathds{Z}}
\def\nQ{\mathds{Q}}
\def\nR{\mathds{R}}
\def\nC{\mathds{C}}
\def\idem{\epsilon}
\def\Xcirc{%
\begin{tikzpicture}[inner sep=0mm]
\node (X) at (0,0) {$X$};
\node (0) at (0,0) [circle,inner sep=0.99pt, thin,draw=black,fill= white] {};
\end{tikzpicture}%
}
\def\Xbul{%
\begin{tikzpicture}[inner sep=0mm]
\node (X) at (0,0) {$X$};
\node (0) at (0,0) [circle,inner sep=0.99pt, thin,draw=black,fill= black] {};
\end{tikzpicture}%
}

\title{Computing Khovanov-Rozansky homology \\ and defect fusion}
\author{Nils Carqueville}
\email{nils.carqueville@physik.uni-muenchen.de}
\address{Arnold Sommerfeld Center for Theoretical Physics, LMU M\"unchen \& Excellence Cluster Universe}

\author{Daniel Murfet}
\email{daniel.murfet@math.ucla.edu}
\address{Department of Mathematics, UCLA}

\classification{57M27}

\begin{abstract}
We compute the categorified $\sln$ link invariants as defined by Khovanov and Rozansky, for various links and values of~$N$. This is made tractable by an algorithm for reducing tensor products of matrix factorisations to finite rank, which we implement in the computer algebra package Singular. %This process can also be interpreted as the fusion of defects in B-twisted Landau-Ginzburg models.
\end{abstract}

\maketitle

\section{Introduction}

In this paper we present the first method to directly compute Khovanov and Rozansky's $\sln$ link homology for arbitrary links. This is made possible by our implementation in the computer algebra system Singular of a technique for explicitly fusing topological defects in Landau-Ginzburg models or, what is the same, reducing tensor products of matrix factorisations to finite-rank. %As a special case we are able to compute the action of kernel functors on categories of matrix factorisations.

Before explaining our approach, we recall the definition of Khovanov and Rozansky's $\sln$ link homology. The starting point is the link diagram $D$ of an oriented link $L$. If there are~$m$ strands, then one introduces ``potentials'' $x_{i}^{N+1}$, $i\in\{1,\ldots,m\}$, one for each strand. In the example of the Hopf link we can decorate the link diagram $D$ as follows:
$$
\begin{tikzpicture}[scale=1.0,baseline, inner sep=1mm, >=stealth]
\node (1) at (-2.9,0) [circle,inner sep=1.5pt, draw=white,fill= white] {{\normalsize $x^{N+1}_{1}$}};
\node (2) at (-1.5,0) [circle,inner sep=1.5pt, draw=white,fill= white] {{\normalsize $x^{N+1}_{2}$}};
\node (3) at (0.15,0) [circle,inner sep=1.5pt, draw=white,fill= white] {{\normalsize $x^{N+1}_{3}$}};
\node (4) at (1.55,0) [circle,inner sep=1.5pt, draw=white,fill= white] {{\normalsize $x^{N+1}_{4}$}};
\draw[ ->, thick, shift={(-1.4,0)} ]  (-41:1) arc (-41:310:1); 
\draw[ ->, thick ] (140:1) arc (-220:130:1); 
\end{tikzpicture}
$$
Next one ``resolves'' crossings by replacing
\begin{minipage}{0.48cm}
\begin{tikzpicture}[scale=0.23,baseline, inner sep=0.1mm, >=stealth]
\node (0) at (0,0) [circle,inner sep=1.5pt, draw=white,fill= white] {};
\node (bl) at (-1,-1) [circle,draw=white,fill= white] {};
\node (br) at (1,-1) [circle,draw=white,fill= white] {};
\node (tl) at (-1,1) [circle,draw=white,fill= white] {};
\node (tr) at (1,1) [circle,draw=white,fill= white] {};
\draw[-] (bl) -- (0); 
\draw[->] (br) -- (tl); 
\draw[->] (0) -- (tr); 
\end{tikzpicture}
\end{minipage}
 and 
\begin{minipage}{0.48cm}     
\begin{tikzpicture}[scale=0.23,baseline, inner sep=0.1mm, >=stealth]
\node (0) at (0,0) [circle,inner sep=1.5pt, draw=white,fill= white] {};
\node (bl) at (-1,-1) [circle,draw=white,fill= white] {};
\node (br) at (1,-1) [circle,draw=white,fill= white] {};
\node (tl) at (-1,1) [circle,draw=white,fill= white] {};
\node (tr) at (1,1) [circle,draw=white,fill= white] {};
\draw[->] (bl) -- (tr); 
\draw[-] (br) -- (0); 
\draw[->] (0) -- (tl); 
\end{tikzpicture}
\end{minipage} 
by certain combinations of the local diagrams %\footnote{We do not use the (representation theoretically more appropriate) ``wide edge'' depiction of~\cite{kr0401268} for the second diagram. This is done to facilitate the interpretation in terms of Landau-Ginzburg models below.} 
\begin{minipage}{0.36cm}
\begin{tikzpicture}[scale=0.23,baseline, inner sep=0.1mm, >=stealth]
\draw[ ->, shift={(-2.5,0)} ]  (-50:1) arc (-50:50:1); 
\draw[ -> ] (230:1) arc (230:130:1); 
\end{tikzpicture}
\end{minipage}
 and 
 \begin{minipage}{0.48cm}
\begin{tikzpicture}[scale=0.23,baseline, inner sep=0.1mm, >=stealth]
\node (0) at (0,0) [circle,inner sep=0.7pt, thin,draw=black,fill= black] {};
\node (bl) at (-1,-1) [circle] {};
\node (br) at (1,-1) [circle] {};
\node (tl) at (-1,1) [circle] {};
\node (tr) at (1,1) [circle] {};
\draw[-] (bl) -- (0); 
\draw[-] (br) -- (0); 
\draw[->] (0) -- (tl); 
\draw[->] (0) -- (tr); 
\end{tikzpicture}
\end{minipage}. \footnote{We do not use the (representation theoretically more appropriate) ``wide edge'' depiction of~\cite{kr0401268} for the second diagram. This is done to facilitate the interpretation in terms of Landau-Ginzburg models below.}
More precisely, one associates complexes of matrix factorisations to each crossing: the construction of~\cite{kr0401268} provides an assignment
\begin{equation}
\label{DeltaX}
\begin{tikzpicture}[scale=0.7,baseline, inner sep=1mm, >=stealth]
\node (1) at (-2.5,1.0) [circle,inner sep=1.5pt, draw=white,fill= white] {$x^{N+1}_{i}$};
\node (2) at (0.1,1.0) [circle,inner sep=1.5pt, draw=white,fill= white] {$x^{N+1}_{j}$};
\node (3) at (0.1,-1.1) [circle,inner sep=1.5pt, draw=white,fill= white] {$x^{N+1}_{k}$};
\node (3) at (-2.5,-1.1) [circle,inner sep=1.5pt, draw=white,fill= white] {$x^{N+1}_{l}$};
\draw[ ->, thick, shift={(-2.5,0)} ]  (-50:1) arc (-50:50:1); 
\draw[ ->, thick ] (230:1) arc (230:130:1); 
\end{tikzpicture}
\widehat = \,
\Xcirc \, ,
\qquad 
\begin{tikzpicture}[scale=0.7,baseline, inner sep=1mm, >=stealth]
\node (1) at (-1.1,1.0) [circle,inner sep=1.5pt, draw=white,fill= white] {$x^{N+1}_{i}$};
\node (2) at (1.5,1.0) [circle,inner sep=1.5pt, draw=white,fill= white] {$x^{N+1}_{j}$};
\node (3) at (1.5,-1.2) [circle,inner sep=1.5pt, draw=white,fill= white] {$x^{N+1}_{k}$};
\node (3) at (-1.1,-1.2) [circle,inner sep=1.5pt, draw=white,fill= white] {$x^{N+1}_{l}$};
\node (0) at (0,0) [circle,inner sep=1.5pt, thin,draw=black,fill= black] {};
\node (bl) at (-1,-1) [circle] {};
\node (br) at (1,-1) [circle] {};
\node (tl) at (-1,1) [circle] {};
\node (tr) at (1,1) [circle] {};
\draw[-,  thick] (bl) -- (0); 
\draw[-,  thick] (br) -- (0); 
\draw[->,  thick] (0) -- (tl); 
\draw[->,  thick] (0) -- (tr); 
\end{tikzpicture}
\widehat = \,
\Xbul
\end{equation}
where $\Xcirc$ and $\Xbul$ are certain $(4\times 4)$-matrices with entries in $\nQ[x_{i},x_{j},x_{k},x_{l}]$ such that $\Xcirc^2=\Xbul^2 = (x^{N+1}_{i}+x^{N+1}_{j}-x^{N+1}_{k}-x^{N+1}_{l})\cdot\operatorname{id}_{4\times 4}$. In other words, $\Xcirc$ and~$\Xbul$ are matrix factorisations of the sum of outgoing potentials minus the sum of incoming potentials.

The ``resolution'' assigns two-term complexes of matrix factorisations to over- and under-crossings: 
\begin{equation}
\label{resolutions}
\begin{tikzpicture}[scale=0.5,baseline, inner sep=1mm, >=stealth]
\node (0) at (0,0) [circle,inner sep=1.5pt, draw=white,fill= white] {};
\node (bl) at (-1,-1) [circle,draw=white,fill= white] {};
\node (br) at (1,-1) [circle,draw=white,fill= white] {};
\node (tl) at (-1,1) [circle,draw=white,fill= white] {};
\node (tr) at (1,1) [circle,draw=white,fill= white] {};
\draw[-, thick] (bl) -- (0); 
\draw[->, thick] (br) -- (tl); 
\draw[->, thick] (0) -- (tr); 
\end{tikzpicture}
\,\widehat =\,
\Big(
\xymatrix{%
0 \vphantom{\underline{X}} \ar[r] & \underline{\Xcirc} \ar[r] & \Xbul \vphantom{\underline{X}} \ar[r] & 0 \vphantom{\underline{X}} \Big) 
}
,  \qquad
\begin{tikzpicture}[scale=0.5,baseline, inner sep=1mm, >=stealth]
\node (0) at (0,0) [circle,inner sep=1.5pt, draw=white,fill= white] {};
\node (bl) at (-1,-1) [circle,draw=white,fill= white] {};
\node (br) at (1,-1) [circle,draw=white,fill= white] {};
\node (tl) at (-1,1) [circle,draw=white,fill= white] {};
\node (tr) at (1,1) [circle,draw=white,fill= white] {};
\draw[->, thick] (bl) -- (tr); 
\draw[-, thick] (br) -- (0); 
\draw[->, thick] (0) -- (tl); 
\end{tikzpicture}
\,\widehat =\,
\Big(
\xymatrix{%
0 \vphantom{\underline{X}} \ar[r] & \Xbul \vphantom{\underline{X}} \ar[r] & \underline{\Xcirc} \ar[r] & 0 \vphantom{\underline{X}} \Big) 
}
.
\end{equation}
where we suppress certain shifts with respect to an internal grading of~$\Xcirc$ and~$\Xbul$ and the underlined component has cohomological degree zero. For each crossing the matrices $\Xcirc,\Xbul$ depend only on the variables $x_{i},x_{j},x_{k},x_{l}$ occurring in the incoming and outgoing potentials. 
%With the above ingredients we can now conclude our reminder of the construction of~\cite{kr0401268}. 

For two general matrix factorisations $X,X'$ of polynomials $W,W'$, respectively, there is a tensor product matrix factorisation $X\otimes X'$ of $W+W'$. To each crossing in a link diagram~$D$ of a link~$L$ one associates the appropriate complex in~\eqref{resolutions}, and then one takes the tensor product of all these complexes to produce the total complex $C(D)$. It has a bigrading which is induced by the cohomological grading of the two-term complexes~\eqref{resolutions} and the internal grading of the matrix factorisations. The main result of~\cite{kr0401268} can now be summarised by the fact that the cohomology $H(L)$ of $C(D)$ is an invariant of~$L$ which is conveniently encoded in the Poincar\'e polynomial
$$
\KR_{N}(L) = \sum_{i,j\in\nZ} t^{i} q^j \dim_{\nQ}(H^{i,j}(L)) \, . 
$$
This is indeed a more refined invariant than the uncategorified $\sln$ link invariant of~\cite{RT1990} which can be recovered as the graded Euler characteristic $\KR_{N}(L)|_{t=-1}$. There is also a ``reduced'' version of the above construction which leads to link invariants $\overline{\KR}_{N}(L,K)$ that \textsl{a priori} depend on the choice of a component~$K$ of~$L$. %These will also be described in Section~\ref{KRconstruction}. 

\medskip

Since the definition of Khovanov-Rozansky homology is very explicit, one might expect that these invariants can be computed in a straightforward way. However, there is a technical impediment that has prevented direct computations until now.

In a link diagram any oriented strand is incoming and outgoing with respect to some crossing, so $C(D)$ is a complex of matrix factorisations of zero, that is, it is a complex in the category of $\mathbb{Z}_2$-graded complexes. Each of these $\mathbb{Z}_2$-graded complexes has a differential given by a matrix which involves the variables~$x_i$ labelling edges in the link diagram, so determining $H(L)$ means computing the cohomology of large square-zero matrices in many variables. A computer algebra package such as Singular computes cohomology by finding Gr\"obner bases, and in the case of Khovanov-Rozansky homology the number and the degrees of the generators of the ideals involved are such that naive approaches exhaust memory and computing power very quickly, making it impossible to determine invariants in all but the simplest examples this way. We mitigate this problem by inserting an intermediate step, which we call \emph{web compilation}, based on techniques developed in \cite{dm1102.2957}.

To see the problem more clearly, let us consider the general situation of two matrix factorisations $X,X'$ of $W_{1}(\boldsymbol{x})-W_{2}(\boldsymbol{y})$ and $W_{2}(\boldsymbol{y})-W_{3}(\boldsymbol{z})$, respectively, where~$W_{2}$ has an isolated singularity at the origin. Then $X\otimes X'$ is a matrix factorisation of $W_{1}(\boldsymbol{x})-W_{3}(\boldsymbol{z})$, but it is of infinite rank over $\nQ[\boldsymbol{x},\boldsymbol{z}]$ as it depends also on the variables~$\boldsymbol{y}$. It is a basic fact that $X\otimes X'$ is isomorphic to an object~$F$ of finite rank in the triangulated category of matrix factorisations of $W_{1}(\boldsymbol{x})-W_{3}(\boldsymbol{z})$ over $\nQ[\boldsymbol{x},\boldsymbol{z}]$. However, only recently a general construction of~$F$ as well as the isomorphism $X\otimes X'\longrightarrow F$ was given in~\cite{dm1102.2957}. We reformulate the details of the construction in Section~\ref{compilewebs}, but the main idea is that first one ``inflates'' the matrix of the differential of $X\otimes X'$ by replacing each $\bs{y}$-monomial by the matrix that represents its multiplication on the Jacobian $\nQ[\boldsymbol{y}]/(\partial_{y_{i}} W_{2})$ in some chosen basis. The inflated matrix $B$ is the differential of a finite rank matrix factorisation, and is endowed with an idempotent $\idem=\vartheta\circ\psi$. The finite rank matrix factorisation $F$ homotopy equivalent to $X \otimes X'$ is given by the splitting of~$\idem$ (up to shifts). %This means that from the pair $B,\idem$ we can find a triple $F,f,g$ with $f:B\longrightarrow F$ and $g:F\longrightarrow B$ such that $f\circ g=1_{F}$ and $g\circ f=\idem$. The isomorphism $X\otimes X'\longrightarrow F$ is then given by $f\circ\vartheta$ (up to a shift). 

We have implemented the above construction using the computer algebra system Singular~\cite{cmWebCompileCode}. This effectively computes tensor products of finite-rank matrix factorisations and, in particular, Khovanov-Rozansky homology. We spell out the details of the construction in Section~\ref{compilewebs} and in Section~\ref{compres} we present the results for link invariants obtained in this way. In particular we compute unreduced and reduced Khovanov-Rozansky homology for all prime links with up to 6 crossings for various values of~$N$. 

Our method could be straightforwardly applied to compute the coloured homological $\sln$ link invariants of~\cite{w0907.0695}, to the categorification of the Kauffman polynomial and to virtual links~\cite{kr0701333}. Unfortunately our approach is not applicable to the categorification~\cite{kr0505056} of the Homfly polynomial, as the potentials that occur in this theory are not isolated singularities. 

\medskip

Let us mention another motivation: the fusion of topological defects in Landau-Ginzburg models. The latter are two-dimensional topological field theories which govern the ``physics'' of domains on a worldsheet. The boundary conditions at the line separating two neighbouring domains are called defects, see e.\,g.~\cite{k1004.2307, dkr1107.0495}. If two such domains are governed by Landau-Ginzburg models with potentials $W_{1}$ and $W_{2}$, then the defects are described by matrix factorisations~$X$ of $W_{1}-W_{2}$~\cite{br0707.0922}.

One may also consider more than two domains and defects between them. For example, there can be a defect~$X$ between theories $W_{1}$ and $W_{2}$, and another defect~$X'$ between $W_{2}$ and $W_{3}$. Because of their topological nature (and in the absence of field insertions in the domain with potential~$W_{2}$) the defects may be moved arbitrarily close to one another. The limit of this process is called the fusion of the two defects, and it is described by the matrix factorisation $X\otimes X'$~\cite{br0707.0922}. Computing a finite-rank representative of this tensor product in the homotopy category of matrix factorisations represents the process of ``integrating out'' the ``unphysical'' variables on which $W_2$ depends.
In this way fusion endows the set of all Landau-Ginzburg models with the structure of a bicategory~\cite{Calinetal2, McNameethesis, cr0909.4381, cr1006.5609}. 

With this interpretation in mind let us now briefly revisit the resolutions~\eqref{resolutions} in terms of the matrix factorisations~\eqref{DeltaX} that are at the heart of the Khovanov-Rozansky construction. It is natural to interpret~$\Xcirc$ and~$\Xbul$ as defects between Landau-Ginzburg models with potentials $x^{N+1}_{i}+x^{N+1}_{j}$ and $x^{N+1}_{k}+x^{N+1}_{l}$. In this sense computing Khovanov-Rozansky invariants boils down to repeatedly fusing defects in Landau-Ginzburg models. 

From this point of view, once one has replaced each crossing in a link diagram with either of the two resolutions 
\begin{minipage}{0.36cm}
\begin{tikzpicture}[scale=0.23,baseline, inner sep=0.1mm, >=stealth]
\draw[ ->, shift={(-2.5,0)} ]  (-50:1) arc (-50:50:1); 
\draw[ -> ] (230:1) arc (230:130:1); 
\end{tikzpicture}
\end{minipage}
 or 
 \begin{minipage}{0.36cm}
\begin{tikzpicture}[scale=0.23,baseline, inner sep=0.1mm, >=stealth]
\node (0) at (0,0) [circle,inner sep=0.7pt, thin,draw=black,fill= black] {};
\node (bl) at (-1,-1) [circle] {};
\node (br) at (1,-1) [circle] {};
\node (tl) at (-1,1) [circle] {};
\node (tr) at (1,1) [circle] {};
\draw[-] (bl) -- (0); 
\draw[-] (br) -- (0); 
\draw[->] (0) -- (tl); 
\draw[->] (0) -- (tr); 
\end{tikzpicture}
\end{minipage}, we can interpret the resulting graph as a slice of a worldsheet foam~\cite{kr0404189, msv0708.2228} with defect lines: to every edge~$i$ we associate a Landau-Ginzburg model with potential $x_{i}^{N+1}$, and to every point we associate a defect. To a point on an edge~$i$ we assign the invisible defect~$I$, and to a point that is a vertex we associate the matrix factorisation~$\Xbul$ as before. This perspective agrees with the construction of~\cite{kr0401268} as the matrix factorisation~$\Xcirc$ associated to 
\begin{minipage}{0.3cm}
\begin{tikzpicture}[scale=0.23,baseline, inner sep=0.1mm, >=stealth]
\draw[ ->, shift={(-2.5,0)} ]  (-50:1) arc (-50:50:1); 
\draw[ -> ] (230:1) arc (230:130:1); 
\end{tikzpicture}
\end{minipage}
is given by the (external) tensor product $I\otimes I$.

\medskip

The paper is organised as follows. In Section~\ref{preliminaries} we review basic facts about graded matrix factorisations and fix our notation. In Section~\ref{compilewebs} we present our method for computing Khovanov-Rozansky homology, taking for granted the matrix factorisations $\Xcirc$ and $\Xbul$. In Appendix \ref{section:constrkr} we provide an alternative construction of this basic data in terms of autoequivalences of triangulated categories arising from adjunctions of symmetric polynomials.
%We illustrate how to use our computer implementation with several simple examples in Section~\ref{simpleexamples}, and we 
In Section~\ref{compres} we
present a discussion of our results for Khovanov-Rozansky homology of links with up to 6 crossings and three components.

\begin{acknowledgements}
We thank N.~Behr, S.~Cautis, S.~Gukov, M.~Khovanov, T.~Licata, A.~Morozov, J.~Rasmussen, R.~Rouquier, I.~Runkel, C.~Stroppel, and G.~Watts. 
\end{acknowledgements}

\section{Preliminaries}
\label{preliminaries}

%In this short section we recall standard notions about matrix factorisations and fix our notation. 

\subsection{Graded matrix factorisations}

Let $R = \bigoplus_{i \ge 0} R_i$ be a graded ring. A $(\ZZ \times \Ztwo)$-graded $R$-module is a graded $R$-module $X$ together with a decomposition $X = X^0 \oplus X^1$ as graded submodules. An element of $(X^i)_j$ has \textsl{bidegree} $(i,j)$ and if $\varphi: X \lto Y$ has bidegree $(a,b)$ if $\varphi(X^i) \subseteq X^{i+a}$ and $\varphi(X_j) \subseteq X_{j+b}$ for all $i,j$.

Let $W \in R_{2c}$ be a homogeneous element of even degree that we call a \textsl{potential}. A \textsl{graded linear factorisation} of $W$ is a pair $X^0, X^1$ of graded $R$-modules together with a pair of $R$-linear maps $(d_X^0, d_X^1)$ of degree $c$, as in the diagram
\[
\xymatrix{
X^0 \ar[r]^-{d_X^0} & X^1 \ar[r]^-{d_X^1} & X^0\,,
}
\]
such that $d_X^0 \circ d_X^1 = W \cdot 1_{X^1}$ and $d_X^1 \circ d_X^0  = W \cdot 1_{X^0}$. It is equivalent to say that $X$ is a $(\ZZ \times \Ztwo)$-graded $R$-module together with an $R$-linear endomorphism $d_X$ of bidegree $(1,c)$, the \textsl{differential}, such that $d_X^2 = W \cdot 1_X$. A graded $R$-module is a graded linear factorisation of zero concentrated in $\nZ_{2}$-degree zero. %, and the same differential as before. %We note that there are different conventions for graded factorisations in the literature: in particular, one need not work with potentials of even degree, but in following this convention of \cite{kr0401268} both differentials enter the definition on an even footing, and this avoids some annoying technicalities.

A \textsl{morphism} $\varphi: X \lto Y$ of degree $e$ between graded linear factorisations is an $R$-linear map of bigdegree $(0, e)$ satisfying $d_Y \circ \varphi = \varphi \circ d_X$. 
%, that is, a pair $\varphi^0, \varphi^1$ of linear maps of degree $e$ making the diagram
%\[
%\xymatrix{
%X^0 \ar[d]_-{\varphi^0} \ar[r]^-{d_X^0} & X^1 \ar[d]_-{\varphi^1} \ar[r]^-{d_X^1} & X^0 \ar[d]_-{\varphi^0}\\
%Y^0 \ar[r]_-{d_Y^0} & Y^1 \ar[r]_-{d_Y^1} & Y^0
%}
%\] 
%commute. 
A graded linear factorisation $X$ is a \textsl{matrix factorisation} if each $X^i$ is a free graded $R$-module, and a \textsl{finite-rank matrix factorisation} if each $X^i$ is a free graded module of finite rank. In this case, if we write each $X^i$ as a finite direct sum of grading shifted copies $R\{ b_i \}$ of $R$, the differential $d_X$ is represented by an odd matrix with homogeneous entries (given an integer $m$, $M\{ m \}$ denotes the $R$-module $M$ with the shifted grading $M\{ m \}_i = M_{i - m}$).

Given morphisms $\varphi, \psi: X \lto Y$ of graded linear factorisations of $W$, a \textsl{homotopy} from $\varphi$ to $\psi$ is an $R$-linear bidegree $(-1,-c)$ map $\lambda: X \lto Y$ such that $d_Y \circ \lambda + \lambda \circ d_X = \psi - \varphi$. We define the \textsl{homotopy category of graded linear factorisations} $\HF(R,W)$ to be the category of graded linear factorisations modulo the homotopy relation. We write $\HMF(R,W)$, respectively $\hmf(R,W)$, for the full subcategories of matrix factorisations and finite-rank matrix factorisations in $\HF(R,W)$. 

If $X$ is a graded linear factorisation of $W$ and $m \in \mathds{Z}$ then the grading shifted module $X\{ m \}$ is a graded linear factorisation with the differential of $X$ and decomposition $X\{ m \} = X^0\{ m \} \oplus X^1\{ m \}$, and so a degree $e$ morphism $X \lto Y$ is the same as a degree zero morphism $X \lto Y\{-e\}$. The \textsl{suspension} of $X$, denoted $X\langle 1 \rangle$, is the graded linear factorisation $(-d_X^1, d_X^0)$.
%\[
%\xymatrix{
%X^1 \ar[r]^-{-d_X^1} & X^0 \ar[r]^-{d_X^0} & X^1\,.
%}
%\]
%Following \cite{kr0401268} we use $\langle - \rangle$ rather than $[-]$ to denote the suspension. %Note that the functors $\{ m \}$ and $\langle n \rangle$ commute.

Let $X,Y$ be graded linear factorisations of $W,W' \in R_{2c}$, respectively. The graded $R$-module $\Hom_{\mathrm{gr}}(X,Y) = \bigoplus_{i \in \mathds{Z}} \Hom_R(X,Y)_i$ has an obvious $\Ztwo$-decomposition into even and odd maps, and equipped with the differential $d(\alpha) = d_Y \circ \alpha - (-1)^{|\alpha|} \alpha \circ d_X$ 
this is a graded linear factorisation of $W' - W$. In particular if $W = W'$ we have a $(\ZZ \times \Ztwo)$-graded complex. %This construction is functorial, in the sense that if $\varphi: Y \lto Y'$ is a morphism of graded linear factorisations of degree $e$, then $\Hom_{\mathrm{gr}}(1_X,\varphi)$ is another morphism of degree $e$, and similarly in the first variable.

%, 
%\begin{align*}
%\Hom_{\mathrm{gr}}(X,Y) = \left( \Hom_{\mathrm{gr}}(X^0, Y^0) \oplus \Hom_{\mathrm{gr}}(X^1, Y^1) %\right)
% \oplus \left( \Hom_{\mathrm{gr}}(X^0, Y^1) \oplus \Hom_{\mathrm{gr}}(X^1, Y^0) \right)\,.
%\end{align*}

Given graded linear factorisations $X,Y$ of $W,W' \in R_{2c}$ respectively the tensor product $X \otimes Y$ (all tensor products in this section are $R$-linear) is a graded $R$-module with a natural $\Ztwo$-decomposition and differential $d_{X\otimes Y} = d_X \otimes 1 + 1 \otimes d_Y$ making it into a graded linear factorisation of $W + W'$. The notation here implicitly involves Koszul signs.%: if $x \in X^i$ and $y \in Y$ then
%$
%d_{X\otimes Y}( x \otimes y ) = d_X(x) \otimes 1 + (-1)^i x \otimes d_Y(y)
%$. 
%The tensor product is functorial in each variable and has all the expected properties, moreover, the graded Hom and tensor interact in the usual way; see Appendix~\ref{appendix:graded_mfs}.

%\begin{align*}
%X \otimes Y = \left( (X^0 \otimes Y^0) \oplus (X^1 \otimes Y^1) \right)
% \oplus \left( (X^0 \otimes Y^1) \oplus (X^1 \otimes Y^0) \right)\,
%\end{align*}

\subsection{Cyclic Koszul complexes}\label{prelim:cyclic_koszul}

Let us recall the definition of the cyclic Koszul complex from \cite[Section~2]{kr0401268}. Let $R = \bigoplus_{i \ge 0} R_i$ be a graded ring. If $a,b\in R$ are homogeneous with $\deg(a) + \deg(b) = 2c$ then $\{ a, b \}$ denotes the graded matrix factorisation of $ab$ given by
\[
\xymatrix{
R \ar[r]^-{a} & R\{ c - \deg(a) \} \ar[r]^-{b} & R \, .
}
\]
For two sequences $\bs{a} = (a_1,\ldots,a_n)$ and $\bs{b} = (b_1, \ldots, b_n)$ of homogeneous elements in $R$ satisfying $\deg(a_i) + \deg(b_i) = 2c$ for each $i$, we define
\[
\{ \bs{a}, \bs{b} \} := \{a_1,b_1\} \otimes \ldots \otimes \{a_n, b_n\}
\]
which is a graded matrix factorisation of $\sum_i a_i b_i \in R_{2c}$. A better way to present the differential on this tensor product is to introduce formal symbols $\theta_i$ of bidegree $(-1, \deg(a_i) - c)$, so that when we write $R \theta_i$ we mean $R\{ \deg(a_i) - c \}$ placed in cohomological degree $-1$. The graded free module $F = \bigoplus_i R\theta_i$ and its exterior algebra $\bigwedge F$ acquire both a $\ZZ$-grading from these cohomological degrees, and a grading in the usual sense. We define differentials $\delta_{\pm}$ on $\bigwedge F$ of bidegree $(\pm 1,c)$ by
\[
\delta_+ = \Big(\sum_i b_i \theta_i^*\Big) \neg\, (-) \, , \qquad \delta_{-} = \Big(\sum_i a_i \theta_i\Big) \wedge (-)\,.
\]
%If~$Z$ is a $\ZZ$-graded complex of graded $R$-modules then its \textsl{$\Ztwo$-folding} is the $(\ZZ \times \Ztwo)$-graded complex with $\bigoplus_{i\;\text{even}} Z^i$ in degree zero and $\bigoplus_{i\;\text{odd}} Z^i$ in degree one. 

On the $\Ztwo$-folding of $\bigwedge F$ the map $\delta = \delta_+ + \delta_-$ has bidegree $(1,c)$ and squares to multiplication by $\sum_i a_i b_i$, so the pair $(\bigwedge F, \delta)$ is a graded matrix factorisation of this potential, canonically isomorphic to $\{ \bs{a}, \bs{b} \}$. We will identify these two factorisations.

\section{Compiling decorated webs}
\label{compilewebs}
% incorporates KRconstruction

Khovanov-Rozansky homology $\KR_{N}$ is a categorification of the polynomial link invariant $P_{N}$ which was constructed from the representation theory of the $\sln$ quantum group in~\cite{RT1990}. In this section we recall the definition of $\KR_{N}$ and explain our technique for computing it in terms of webs of matrix factorisations and their compilation. %recalling how to calculate $P_{N}$ via graph skein relations~\cite{moy1998}. %This will lead us to webs of matrix factorisations and their compilation, which we may think of as multiple defect fusion. 

\medskip

The construction of $P_{N}(L)$ for a link~$L$ is a two-step process. The first step starts from the link diagram of~$L$ and replaces each crossing 
\begin{minipage}{0.48cm}
\begin{tikzpicture}[scale=0.23,baseline, inner sep=0.1mm, >=stealth]
\node (0) at (0,0) [circle,inner sep=1.5pt, draw=white,fill= white] {};
\node (bl) at (-1,-1) [circle,draw=white,fill= white] {};
\node (br) at (1,-1) [circle,draw=white,fill= white] {};
\node (tl) at (-1,1) [circle,draw=white,fill= white] {};
\node (tr) at (1,1) [circle,draw=white,fill= white] {};
\draw[-] (bl) -- (0); 
\draw[->] (br) -- (tl); 
\draw[->] (0) -- (tr); 
\end{tikzpicture}
\end{minipage}
 or 
\begin{minipage}{0.48cm}
\begin{tikzpicture}[scale=0.23,baseline, inner sep=0.1mm, >=stealth]
\node (0) at (0,0) [circle,inner sep=1.5pt, draw=white,fill= white] {};
\node (bl) at (-1,-1) [circle,draw=white,fill= white] {};
\node (br) at (1,-1) [circle,draw=white,fill= white] {};
\node (tl) at (-1,1) [circle,draw=white,fill= white] {};
\node (tr) at (1,1) [circle,draw=white,fill= white] {};
\draw[->] (bl) -- (tr); 
\draw[-] (br) -- (0); 
\draw[->] (0) -- (tl); 
\end{tikzpicture}
\end{minipage} 
 by either the \textsl{singular crossing}
 \begin{minipage}{0.48cm}
\begin{tikzpicture}[scale=0.23,baseline, inner sep=0.1mm, >=stealth]
\node (0) at (0,0) [circle,inner sep=0.7pt, thin,draw=black,fill= black] {};
\node (bl) at (-1,-1) [circle] {};
\node (br) at (1,-1) [circle] {};
\node (tl) at (-1,1) [circle] {};
\node (tr) at (1,1) [circle] {};
\draw[-] (bl) -- (0); 
\draw[-] (br) -- (0); 
\draw[->] (0) -- (tl); 
\draw[->] (0) -- (tr); 
\end{tikzpicture}
\end{minipage}
 or the \textsl{smoothing}
\begin{minipage}{0.36cm}
\begin{tikzpicture}[scale=0.23,baseline, inner sep=0.1mm, >=stealth]
\draw[ ->, shift={(-2.5,0)} ]  (-50:1) arc (-50:50:1); 
\draw[ -> ] (230:1) arc (230:130:1); 
\end{tikzpicture}
\end{minipage}, 
where both strands of a smoothing are actually divided by a bivalent vertex called a \textsl{mark} that we do not explicitly show in our graphs.\footnote{As mentioned in the introduction, instead of introducing labelled edges already at this point as in~\cite{moy1998}, or wide edges with three-valent vertices as in~\cite{kr0401268}, we prefer to formulate the construction in terms of the vertex  \begin{minipage}{0.38cm}
\begin{tikzpicture}[scale=0.18,baseline, inner sep=0.1mm, >=stealth]
\node (0) at (0,0) [circle,inner sep=0.7pt, thin,draw=black,fill= black] {};
\node (bl) at (-1,-1) [circle] {};
\node (br) at (1,-1) [circle] {};
\node (tl) at (-1,1) [circle] {};
\node (tr) at (1,1) [circle] {};
\draw[-] (bl) -- (0); 
\draw[-] (br) -- (0); 
\draw[->] (0) -- (tl); 
\draw[->] (0) -- (tr); 
\end{tikzpicture}
\end{minipage}.} 
From a link diagram with~$s$ crossings this produces $2^s$ oriented planar graphs~$\Gamma$ whose vertices are either bivalent with one incoming and one outgoing edge, or four-valent of type 
 \begin{minipage}{0.48cm}
\begin{tikzpicture}[scale=0.23,baseline, inner sep=0.1mm, >=stealth]
\node (0) at (0,0) [circle,inner sep=0.7pt, thin,draw=black,fill= black] {};
\node (bl) at (-1,-1) [circle] {};
\node (br) at (1,-1) [circle] {};
\node (tl) at (-1,1) [circle] {};
\node (tr) at (1,1) [circle] {};
\draw[-] (bl) -- (0); 
\draw[-] (br) -- (0); 
\draw[->] (0) -- (tl); 
\draw[->] (0) -- (tr); 
\end{tikzpicture}
\end{minipage}. If~$\Gamma$ has~$r$ edges and we label each edge by distinct integers $1,\ldots,r$, then we call it a \textsl{state graph}. To each state graph~$\Gamma$ the construction of~\cite{moy1998} associates a Laurent polynomial $p_{N}(\Gamma)\in\nZ[q^{\pm 1}]$ that is uniquely determined by its value $[N]=(q^N-q^{-N})(q-q^{-1})^{-1}$ for an oriented circle, and the \textsl{MOY relations} \cite[p.$4$]{kr0401268}.
% tikz diagrams moved to after document end

In the second step one produces from the link diagram $D$ a linear combination of these $2^s$ graphs $\Gamma_{j}$ by resolving each crossing in the link diagram according to
\begin{equation}
\label{graphreduction}
\begin{tikzpicture}[scale=0.5,baseline, inner sep=1mm, >=stealth]
\node (0) at (0,0) [circle,inner sep=1.5pt, draw=white,fill= white] {};
\node (bl) at (-1,-1) [circle,draw=white,fill= white] {};
\node (br) at (1,-1) [circle,draw=white,fill= white] {};
\node (tl) at (-1,1) [circle,draw=white,fill= white] {};
\node (tr) at (1,1) [circle,draw=white,fill= white] {};
\draw[-, thick] (bl) -- (0); 
\draw[->, thick] (br) -- (tl); 
\draw[->, thick] (0) -- (tr); 
\end{tikzpicture}
\widehat = \;
q^{1-N} \,
\begin{tikzpicture}[scale=0.5,baseline, inner sep=1mm, >=stealth]
\draw[ ->, thick, shift={(-2.5,0)} ]  (-50:1) arc (-50:50:1); 
\draw[ ->, thick ] (230:1) arc (230:130:1); 
\end{tikzpicture}
- q^{-N} 
\begin{tikzpicture}[scale=0.5,baseline, inner sep=1mm, >=stealth]
\node (bl) at (-1,-1) [circle] {};
\node (br) at (1,-1) [circle] {};
\node (tl) at (-1,1) [circle] {};
\node (tr) at (1,1) [circle] {};
\node (0) at (0,0) [circle,inner sep=1.5pt, thin,draw=black,fill= black] {};
\draw[-,  thick] (bl) -- (0); 
\draw[-,  thick] (br) -- (0); 
\draw[->,  thick] (0) -- (tl); 
\draw[->,  thick] (0) -- (tr); 
\end{tikzpicture}
\, , \qquad 
\begin{tikzpicture}[scale=0.5,baseline, inner sep=1mm, >=stealth]
\node (0) at (0,0) [circle,inner sep=1.5pt, draw=white,fill= white] {};
\node (bl) at (-1,-1) [circle,draw=white,fill= white] {};
\node (br) at (1,-1) [circle,draw=white,fill= white] {};
\node (tl) at (-1,1) [circle,draw=white,fill= white] {};
\node (tr) at (1,1) [circle,draw=white,fill= white] {};
\draw[->, thick] (bl) -- (tr); 
\draw[-, thick] (br) -- (0); 
\draw[->, thick] (0) -- (tl); 
\end{tikzpicture}
\widehat = \;
q^{N-1} \,
\begin{tikzpicture}[scale=0.5,baseline, inner sep=1mm, >=stealth]
\draw[ ->, thick, shift={(-2.5,0)} ]  (-50:1) arc (-50:50:1); 
\draw[ ->, thick ] (230:1) arc (230:130:1); 
\end{tikzpicture}
- q^{N} 
\begin{tikzpicture}[scale=0.5,baseline, inner sep=1mm, >=stealth]
\node (bl) at (-1,-1) [circle] {};
\node (br) at (1,-1) [circle] {};
\node (tl) at (-1,1) [circle] {};
\node (tr) at (1,1) [circle] {};
\node (0) at (0,0) [circle,inner sep=1.5pt, thin,draw=black,fill= black] {};
\draw[-,  thick] (bl) -- (0); 
\draw[-,  thick] (br) -- (0); 
\draw[->,  thick] (0) -- (tl); 
\draw[->,  thick] (0) -- (tr); 
\end{tikzpicture} \, .
\end{equation}
Denoting by $q^{\alpha_j}$ the product of the~$s$ $q$-monomials associated to the graph $\Gamma_{j}$ by~\eqref{graphreduction},
$$
P_{N}(L) = \sum_{j=1}^{2^s} q^{\alpha_j} p_{N}(\Gamma_{j}) \, .
$$
Khovanov and Rozansky categorify both steps in the above construction. 
%As we will discuss in more detail later, $p_{N}(\Gamma)$ is replaced by a graded vector space which is obtained by assigning two special matrix factorisations to the resolutions 
%\begin{minipage}{0.36cm}
%\begin{tikzpicture}[scale=0.23,baseline, inner sep=0.1mm, >=stealth]
%\draw[ ->, shift={(-2.5,0)} ]  (-50:1) arc (-50:50:1); 
%\draw[ -> ] (230:1) arc (230:130:1); 
%\end{tikzpicture}
%\end{minipage}
% and 
% \begin{minipage}{0.48cm}
%\begin{tikzpicture}[scale=0.23,baseline, inner sep=0.1mm, >=stealth]
%\node (0) at (0,0) [circle,inner sep=0.7pt, thin,draw=black,fill= black] {};
%\node (bl) at (-1,-1) [circle] {};
%\node (br) at (1,-1) [circle] {};
%\node (tl) at (-1,1) [circle] {};
%\node (tr) at (1,1) [circle] {};
%\draw[-] (bl) -- (0); 
%\draw[-] (br) -- (0); 
%\draw[->] (0) -- (tl); 
%\draw[->] (0) -- (tr); 
%\end{tikzpicture}
%\end{minipage} 
%and then taking their tensor product. In the second step~\eqref{graphreduction} will be replaced by two complexes of matrix factorisations, and $\KR_{N}$ is given by the Poincar\'e polynomial of the total complex. 

To present their categorification we consider the general notion of a web of matrix factorisations and its compilation. To compute $\KR_{N}$ we will need the special case that involves only the two matrix factorisations associated to 
\begin{minipage}{0.36cm}
\begin{tikzpicture}[scale=0.23,baseline, inner sep=0.1mm, >=stealth]
\draw[ ->, shift={(-2.5,0)} ]  (-50:1) arc (-50:50:1); 
\draw[ -> ] (230:1) arc (230:130:1); 
\end{tikzpicture}
\end{minipage}
 and 
 \begin{minipage}{0.48cm}
\begin{tikzpicture}[scale=0.23,baseline, inner sep=0.1mm, >=stealth]
\node (0) at (0,0) [circle,inner sep=0.7pt, thin,draw=black,fill= black] {};
\node (bl) at (-1,-1) [circle] {};
\node (br) at (1,-1) [circle] {};
\node (tl) at (-1,1) [circle] {};
\node (tr) at (1,1) [circle] {};
\draw[-] (bl) -- (0); 
\draw[-] (br) -- (0); 
\draw[->] (0) -- (tl); 
\draw[->] (0) -- (tr); 
\end{tikzpicture}
\end{minipage}. 
However, the general case is also of interest, e.\,g.~for the multiple fusion of topological defects in Landau-Ginzburg models. 

\begin{definition}\label{defwebs}
A \textsl{web} $\cat{F}$ is an oriented graph with a nonempty set of vertices $V$ and a nonempty set of oriented edges $E$. Edges are allowed to begin or end (but not both) on an auxiliary ``vertex'' which we call the \textsl{boundary}, but we stress that whenever we refer to vertices we never mean the boundary. An edge is \textsl{internal} if it does not begin or end on the boundary, and \textsl{external} otherwise. 

Let $k$ be a field of characteristic zero and $c \ge 0$. A \textsl{decoration} of $\cat{F}$ is an assignment as follows:
\begin{itemize}
\item For each edge $e$ a set of variables $\boldsymbol{x}_e$ and homogeneous potential $W_e \in k[\boldsymbol{x}_e]$ of degree $2c$, where we give the ring variables degree two, such that the potential has an isolated singularity, 
\[
\dim_k\left( k[\bs{x}_e]/(\partial_{\bs{x}_e} W) \right) < \infty \, .
\]
Here $(\partial_{\bs{x}_e} W)$ denotes the ideal generated by all the partial derivatives of $W_e$. For distinct edges $e,e'$ the sets $\boldsymbol{x}_e$ and $\boldsymbol{x}_{e'}$ are disjoint. We say that the variables in $\boldsymbol{x}_e$ \textsl{lie on} the edge $e$.
\item For each vertex $v$ a matrix factorisation $X_v$ of the difference between the incoming and outgoing potentials at $v$. More precisely, let $I_v$ denote the set of incoming edges at $v$ and $O_v$ the set of outgoing edges (either of which may be empty) and set
\[
W_v = \sum_{e \in O_v} W_e - \sum_{e \in I_v} W_e \, .
\]
Let $\boldsymbol{x}_v$ denote the set of variables lying on edges incident to $v$, so that $W_v \in k[\boldsymbol{x}_v]$. Then $X_v$ is a finite-rank graded matrix factorisation of $W_v$ over $k[\bs{x}_v]$, with a chosen homogeneous basis.
\end{itemize}
To any decoration we associate the total potential and total matrix factorisation, as follows. Let $\bs{x}$ be the set of all variables in the decoration. For each vertex $v$ there is the inclusion $\varphi_v: k[\bs{x}_v] \lto k[\bs{x}]$ and we define the \textsl{total matrix factorisation} by fixing once and for all an ordering $V = \{ v_1, \ldots, v_s \}$ on the vertices, and thereby defining
\[
X := \bigotimes_{v\in V} \varphi_v^*(X_v) = X_{v_1} \otimes \ldots \otimes X_{v_s}\,.
\]
Let $O$ denote the set of edges ending on the boundary, and $I$ the edges beginning on the boundary. Since the internal potentials cancel, $X$ is a finite-rank graded matrix factorisation over $k[\bs{x}]$ of the \textsl{total potential}
\[
W := \sum_{v\in V} W_v = \sum_{e \in O} W_e - \sum_{e \in I} W_e\,.
\]
The chosen homogeneous bases of the $X_v$ determine a homogeneous basis of $X$. Notice that if the original web is \textsl{closed}, meaning that $I$ and $O$ are both empty, then $W = 0$ and $X$ is a $(\ZZ \times \Ztwo)$-graded complex.
\end{definition}

\begin{example}
\label{MOYremark}
If we begin with a state graph $\Gamma$ of a link diagram and replace each smoothing
\begin{minipage}{0.30cm}
\begin{tikzpicture}[scale=0.23,baseline, inner sep=0.1mm, >=stealth]
\draw[ ->, shift={(-2.5,0)} ]  (-50:1) arc (-50:50:1); 
\draw[ -> ] (230:1) arc (230:130:1); 
\end{tikzpicture}
\end{minipage}
 by a \textsl{smooth vertex} 
\begin{minipage}{0.48cm}
\begin{tikzpicture}[scale=0.23,baseline, inner sep=0.1mm, >=stealth]
\node (bl) at (-1,-1) [circle] {};
\node (br) at (1,-1) [circle] {};
\node (tl) at (-1,1) [circle] {};
\node (tr) at (1,1) [circle] {};
\draw[-] (bl) -- (0); 
\draw[-] (br) -- (0); 
\draw[->] (0) -- (tl); 
\draw[->] (0) -- (tr); 
\node (0) at (0,0) [circle,inner sep=0.7pt, thin,draw=black,fill= white] {};
\end{tikzpicture}
\end{minipage}
then we obtain a closed oriented planar graph whose vertices are of types
\begin{minipage}{0.48cm}
\begin{tikzpicture}[scale=0.23,baseline, inner sep=0.1mm, >=stealth]
\node (bl) at (-1,-1) [circle] {};
\node (br) at (1,-1) [circle] {};
\node (tl) at (-1,1) [circle] {};
\node (tr) at (1,1) [circle] {};
\draw[-] (bl) -- (0); 
\draw[-] (br) -- (0); 
\draw[->] (0) -- (tl); 
\draw[->] (0) -- (tr); 
\node (0) at (0,0) [circle,inner sep=0.7pt, thin,draw=black,fill= white] {};
\end{tikzpicture}
\end{minipage} 
 and 
\begin{minipage}{0.48cm}
\begin{tikzpicture}[scale=0.23,baseline, inner sep=0.1mm, >=stealth]
\node (0) at (0,0) [circle,inner sep=0.7pt, thin,draw=black,fill= black] {};
\node (bl) at (-1,-1) [circle] {};
\node (br) at (1,-1) [circle] {};
\node (tl) at (-1,1) [circle] {};
\node (tr) at (1,1) [circle] {};
\draw[-] (bl) -- (0); 
\draw[-] (br) -- (0); 
\draw[->] (0) -- (tl); 
\draw[->] (0) -- (tr); 
\end{tikzpicture}
\end{minipage}. We define a decoration of this \textsl{state web} by fixing an integer $N$ and assigning to each edge $i$ a single variable $x_{i}$ and the potential $x_{i}^{N+1}$. Choose two finite-rank graded matrix factorisations
\[
\Xcirc, \Xbul \in \hmf( k[u,x,y,z], u^{N+1} +x^{N+1} - y^{N+1} - z^{N+1} )
\]
(whose dependence on the variables we do not usually display) and make the following assignments of matrix factorisations to vertices: 
\begin{equation}
\label{MFcrossings}
\begin{tikzpicture}[scale=0.7,baseline, inner sep=1mm, >=stealth]
\node (1) at (-1.1,1.0) [circle,inner sep=1.5pt, draw=white,fill= white] {$x^{N+1}_{i}$};
\node (2) at (1.5,1.0) [circle,inner sep=1.5pt, draw=white,fill= white] {$x^{N+1}_{j}$};
\node (3) at (1.5,-1.2) [circle,inner sep=1.5pt, draw=white,fill= white] {$x^{N+1}_{k}$};
\node (3) at (-1.1,-1.2) [circle,inner sep=1.5pt, draw=white,fill= white] {$x^{N+1}_{l}$};
\node (0) at (0,0) [circle,inner sep=1.5pt, thin,draw=black,fill= white] {};
\node (bl) at (-1,-1) [circle] {};
\node (br) at (1,-1) [circle] {};
\node (tl) at (-1,1) [circle] {};
\node (tr) at (1,1) [circle] {};
\draw[-,  thick] (bl) -- (0); 
\draw[-,  thick] (br) -- (0); 
\draw[->,  thick] (0) -- (tl); 
\draw[->,  thick] (0) -- (tr); 
\end{tikzpicture}
\widehat = \,
\Xcirc(x_{i},x_{j},x_{k},x_{l}) \, ,
\qquad 
\begin{tikzpicture}[scale=0.7,baseline, inner sep=1mm, >=stealth]
\node (1) at (-1.1,1.0) [circle,inner sep=1.5pt, draw=white,fill= white] {$x^{N+1}_{i}$};
\node (2) at (1.5,1.0) [circle,inner sep=1.5pt, draw=white,fill= white] {$x^{N+1}_{j}$};
\node (3) at (1.5,-1.2) [circle,inner sep=1.5pt, draw=white,fill= white] {$x^{N+1}_{k}$};
\node (3) at (-1.1,-1.2) [circle,inner sep=1.5pt, draw=white,fill= white] {$x^{N+1}_{l}$};
\node (0) at (0,0) [circle,inner sep=1.5pt, thin,draw=black,fill= black] {};
\node (bl) at (-1,-1) [circle] {};
\node (br) at (1,-1) [circle] {};
\node (tl) at (-1,1) [circle] {};
\node (tr) at (1,1) [circle] {};
\draw[-,  thick] (bl) -- (0); 
\draw[-,  thick] (br) -- (0); 
\draw[->,  thick] (0) -- (tl); 
\draw[->,  thick] (0) -- (tr); 
\end{tikzpicture}
\widehat = \,
\Xbul (x_{i},x_{j},x_{k},x_{l}) \, .
\end{equation}
Since the web is closed, the total matrix factorisation will be a finite-rank $(\ZZ \times \Ztwo)$-graded complex $C(\Gamma)$ over the polynomial ring of all edge variables. As we will see below, the cohomology $H(\Gamma)$ of this complex is finite-dimensional for general reasons.
\end{example}

\begin{example}\label{example:defectonbrane} 
Consider the web
$$
\begin{tikzpicture}[scale=1.0,baseline=-0.1cm, inner sep=1mm, >=stealth]
\node (b) at (-2,0)  {};
\node (D) at (0,0) [circle,draw=black,fill= white, thick] {$u$};
\node (Q) at (2,0) [circle,draw=black,fill= white, thick] {$v$};
\draw[->,  thick] (Q) -- (D) node[midway,above] {$e'$}; 
\draw[->,  thick] (D) -- (b) node[midway,above] {$e$}; 
\end{tikzpicture} 
$$
with vertices $u,v$ and edges $e,e'$ which are external and internal, respectively, and we imagine the boundary to the left. A decoration would consist of $e$-variables $\bs{x}$, $e'$-variables $\bs{x'}$ and homogeneous potentials $W \in k[\bs{x}], W' \in k[\bs{x'}]$ with isolated singularities together with a matrix factorisation $X$ of $W - W'$ and $Q$ of $W'$. 

The total matrix factorisation $X \otimes Q$ is an object of $\hmf(k[\bs{x},\bs{x'}], W)$ which can be viewed as an infinite-rank matrix factorisation over $k[\bs{x}]$ of $W$. In this way, if we were to fix $X$ and let $Q$ vary, we can view $X$ as a functor $\HMF(k[\bs{x'}], W') \lto \HMF(k[\bs{x}], W)$.
\end{example}

The relevant problem for Khovanov-Rozansky homology is: given a decorated web $\cat{F}$ how do we \textsl{compute} a finite-rank representative for the total factorisation? To avoid triviality we assume that there is at least one internal edge.

Let $\intvar$ denote the set of all variables lying on internal edges (the \textsl{internal} variables) and $\extvar$ the set of all variables lying on external edges (the \textsl{external} variables). Clearly $W$ only depends on the external variables, so the canonical inclusion $\varphi: k[\extvar] \lto k[\bs{x}]$ induces a functor
\[
\varphi_*: \hmf(k[\bs{x}], W) \lto \HMF(k[\extvar], W)
\]
defined by taking a graded matrix factorisation of $W$ over $k[\bs{x}]$ and viewing it as a factorisation of $W$ over the smaller ring. We refer to this functor as the \textsl{pushforward}. There are two things to note: firstly, that a finite-rank free module over $k[\bs{x}]$ will necessarily be of infinite-rank over $k[\extvar]$ so the pushforward produces infinite-rank matrix factorisations, and secondly that the case where $\extvar$ is empty and $W = 0$, so $k[\extvar] = k$, is of primary interest for our later discussion of link homology.

It is well-known that because all involved singularities are isolated the pushforward of the total factorisation $\varphi_*(X)$, while infinite-rank, is still homotopy equivalent to a finite-rank factorisation. Finding this equivalent finite-rank factorisation is what we mean by computing the pushforward. The reader might like to consider the total factorisation $X = C(\Gamma)$ from Example \ref{MOYremark}, where $k[\extvar] = k$ and an example of a finite-rank matrix factorisation equivalent to $\varphi_*(X)$ is the cohomology $H(\Gamma)$ viewed as a $(\ZZ \times \Ztwo)$-graded complex with zero differential. Since Khovanov-Rozansky homology will be built out of the $H(\Gamma)$'s, such computations are the central motivation of this paper.

In practice this computation is hard to do, so we introduce an intermediate notion:

\begin{definition}
\label{def:compilation}
Consider a pair $(C, \idem)$ consisting of a finite-rank graded matrix factorisation $C$ of $W$ over $k[\extvar]$ together with an idempotent endomorphism $\idem$ of $C$ (that is, $\idem^2$ is homotopic to $\idem$). We say that the decorated web $\mathcal F$ above \textsl{compiles} to $(C,\idem)$ if there exist morphisms
\[
\psi: C \lto \varphi_*(X) \, , \qquad \vartheta: \varphi_*(X) \lto C
\]
such that $\psi \circ \vartheta = 1_{\varphi_*(X)}$ and $\vartheta \circ \psi = \idem$ in $\operatorname{HMF}(k[\extvar],W)$.
\end{definition}

It is a short step from a compilation to a finite-rank representative for the pushforward:

\begin{remark} 
\label{compileAndSplit}
Suppose that $(C, \idem)$ compiles $\cat{F}$. The category $\hmf(k[\extvar],W)$ has split idempotents, and moreover idempotents in this category can be split algorithmically. So it is possible in practice to find a finite-rank graded matrix factorisation $F$ and morphisms $f,g$ as in the diagram
\[
\xymatrix@C+2pc{
F \ar@<-1ex>[r]_-{g} & C \ar@<-1ex>[l]_-{f}\ar@<-1ex>[r]_-{\psi} & \varphi_*(X) \ar@<-1ex>[l]_-{\vartheta}
}
\]
such that $f \circ g = 1_F$ and $g \circ f = \idem$ in $\hmf(k[\extvar], W)$. Then $f \circ \vartheta$ is a homotopy equivalence and $F$ is a finite-rank model of $\varphi_*(X)$.
\end{remark}

By reformulating a result of~\cite{dm1102.2957} we can explicitly construct a pair $(C, \idem)$ compiling $\cat{F}$. We will use the phrase ``compiling a web'' synonymously for ``computing the pushforward of the total matrix factorisation of a web''. This explicit construction is the key to our calculations, but in order to preserve the flow of the narrative we delay the details to Section \ref{section:explicitidempotents} and return now to the construction of Khovanov-Rozansky homology.

\medskip

In \cite{kr0401268} there are defined two basic matrix factorisations $\Xcirc$ and $\Xbul$ (our notation is different, but the explicit definitions are given in~\eqref{XbulXcirc} below). Let~$D$ be a link diagram for a link~$L$, and let $\mathcal C$ be the set of crossings in~$D$. If we replace each crossing by either 
\begin{minipage}{0.48cm}
\begin{tikzpicture}[scale=0.23,baseline, inner sep=0.1mm, >=stealth]
\node (bl) at (-1,-1) [circle] {};
\node (br) at (1,-1) [circle] {};
\node (tl) at (-1,1) [circle] {};
\node (tr) at (1,1) [circle] {};
\node (0) at (0,0) [circle,inner sep=0.7pt, thin,draw=black,fill= white] {};
\draw[-] (bl) -- (0); 
\draw[-] (br) -- (0); 
\draw[->] (0) -- (tl); 
\draw[->] (0) -- (tr); 
\node (0) at (0,0) [circle,inner sep=0.7pt, thin,draw=black,fill= white] {};
\end{tikzpicture}
\end{minipage} 
 or 
\begin{minipage}{0.48cm}
\begin{tikzpicture}[scale=0.23,baseline, inner sep=0.1mm, >=stealth]
\node (0) at (0,0) [circle,inner sep=0.7pt, thin,draw=black,fill= black] {};
\node (bl) at (-1,-1) [circle] {};
\node (br) at (1,-1) [circle] {};
\node (tl) at (-1,1) [circle] {};
\node (tr) at (1,1) [circle] {};
\draw[-] (bl) -- (0); 
\draw[-] (br) -- (0); 
\draw[->] (0) -- (tl); 
\draw[->] (0) -- (tr); 
\end{tikzpicture}
\end{minipage} 
we obtain a resolution~$\Gamma$ of~$D$, and if we associate matrix factorisations to both types of four-valent vertices by assigning~$\Xcirc$ to
\begin{minipage}{0.48cm}
\begin{tikzpicture}[scale=0.23,baseline, inner sep=0.1mm, >=stealth]
\node (bl) at (-1,-1) [circle] {};
\node (br) at (1,-1) [circle] {};
\node (tl) at (-1,1) [circle] {};
\node (tr) at (1,1) [circle] {};
\node (0) at (0,0) [circle,inner sep=0.7pt, thin,draw=black,fill= white] {};
\draw[-] (bl) -- (0); 
\draw[-] (br) -- (0); 
\draw[->] (0) -- (tl); 
\draw[->] (0) -- (tr); 
\node (0) at (0,0) [circle,inner sep=0.7pt, thin,draw=black,fill= white] {};
\end{tikzpicture}
\end{minipage} 
 and~$\Xbul\{-1\}$ to 
\begin{minipage}{0.48cm}
\begin{tikzpicture}[scale=0.23,baseline, inner sep=0.1mm, >=stealth]
\node (0) at (0,0) [circle,inner sep=0.7pt, thin,draw=black,fill= black] {};
\node (bl) at (-1,-1) [circle] {};
\node (br) at (1,-1) [circle] {};
\node (tl) at (-1,1) [circle] {};
\node (tr) at (1,1) [circle] {};
\draw[-] (bl) -- (0); 
\draw[-] (br) -- (0); 
\draw[->] (0) -- (tl); 
\draw[->] (0) -- (tr); 
\end{tikzpicture}
\end{minipage} 
then~$\Gamma$ becomes a state web as explained in Example~\ref{MOYremark}. Promoting every resolution $\Gamma$ to a state web using these local assignments, and writing $C(\Gamma)$ for the total matrix factorisation, it is shown in \cite{kr0401268} that there is a series of homotopy equivalences of matrix factorisations which together give a categorified version of the MOY relations: 
\begin{align}
\label{MOYcatDecomp1}&
C \bigg( 
\begin{tikzpicture}[scale=0.5,baseline=-2pt, inner sep=1mm, >=stealth]
\node (x2) at (-1,1) [circle,inner sep=1.5pt, draw=white,fill= white] {{\tiny $x_{2}$}};
\node (x3) at (-1,-1) [circle,inner sep=1.5pt, draw=white,fill= white] {{\tiny $x_{3}$}};
\node (centre) at (0,0.75) [circle] {};
\node (bl) at (-1,-1) [circle] {};
\node (br) at (1,-1) [circle] {};
\node (tl) at (-1,1) [circle] {};
\node (r) at (1,0) [circle] {};
\node (0) at (0,0) [circle,inner sep=1.5pt, thin,draw=black,fill= black] {};
\draw[->,  thick] (bl) -- (0); 
\draw[->,  thick] (0) -- (tl); 
\draw[ ->, thick, shift={(0.6,0)} ]  (175:0.6) arc (175:0:0.6); 
\draw[ -, thick, shift={(-0.6,0)} ]  (0:0.6) arc (-175:0:0.6); 
\end{tikzpicture} 
\bigg)
\langle 1\rangle
\cong
\bigoplus_{i=1}^{N-2} \,
C \bigg( 
\begin{tikzpicture}[scale=0.5,baseline=-2pt, inner sep=1mm, >=stealth]
\node (x2) at (0.5,1) [circle,inner sep=1.5pt, draw=white,fill= white] {{\tiny $x_{2}$}};
\node (x3) at (0.5,-1) [circle,inner sep=1.5pt, draw=white,fill= white] {{\tiny $x_{3}$}};
\draw[ ->, thick ]  (-50:1) arc (-50:50:1); 
\end{tikzpicture}
\bigg)
\{ 2-N-2i\} \, ,
\\
\label{MOYcatDecomp2}&
C \Bigg( 
\begin{tikzpicture}[scale=0.5,baseline=(centre.base), inner sep=1mm, >=stealth]
\node (x1) at (-1,2.5) [circle,inner sep=1.5pt, draw=white,fill= white] {{\tiny $x_{1}$}};
\node (x2) at (1,2.5) [circle,inner sep=1.5pt, draw=white,fill= white] {{\tiny $x_{2}$}};
\node (x3) at (1,-1) [circle,inner sep=1.5pt, draw=white,fill= white] {{\tiny $x_{3}$}};
\node (x4) at (-1,-1) [circle,inner sep=1.5pt, draw=white,fill= white] {{\tiny $x_{4}$}};
\node (centre) at (0,0.75) [circle] {};
\node (bl) at (-1,-1) [circle] {};
\node (br) at (1,-1) [circle] {};
\node (tl) at (-1,2.5) [circle] {};
\node (tr) at (1,2.5) [circle] {};
\node (0) at (0,0) [circle,inner sep=1.5pt, thin,draw=black,fill= black] {};
\node (02) at (0,1.5) [circle,inner sep=1.5pt, thin,draw=black,fill= black] {};
\draw[->,  thick] (bl) -- (0); 
\draw[->,  thick] (br) -- (0); 
\draw[->,  thick] (0) to [out=135,in=225] (02); 
\draw[->,  thick] (0) to [out=45,in=-45] (02); 
\draw[->,  thick] (02) -- (tl); 
\draw[->,  thick] (02) -- (tr); 
\end{tikzpicture} 
\Bigg)
\cong
C \bigg( 
\begin{tikzpicture}[scale=0.5,baseline=-2pt, inner sep=1mm, >=stealth]
\node (x1) at (-1,1) [circle,inner sep=1.5pt, draw=white,fill= white] {{\tiny $x_{1}$}};
\node (x2) at (1,1) [circle,inner sep=1.5pt, draw=white,fill= white] {{\tiny $x_{2}$}};
\node (x3) at (1,-1) [circle,inner sep=1.5pt, draw=white,fill= white] {{\tiny $x_{3}$}};
\node (x4) at (-1,-1) [circle,inner sep=1.5pt, draw=white,fill= white] {{\tiny $x_{4}$}};
\node (bl) at (-1,-1) [circle] {};
\node (br) at (1,-1) [circle] {};
\node (tl) at (-1,1) [circle] {};
\node (tr) at (1,1) [circle] {};
\node (0) at (0,0) [circle,inner sep=1.5pt, thin,draw=black,fill= black] {};
\draw[-,  thick] (bl) -- (0); 
\draw[-,  thick] (br) -- (0); 
\draw[->,  thick] (0) -- (tl); 
\draw[->,  thick] (0) -- (tr); 
\end{tikzpicture} 
\bigg)
\{ 1\}
\oplus
C \bigg( 
\begin{tikzpicture}[scale=0.5,baseline=-2pt, inner sep=1mm, >=stealth]
\node (x1) at (-1,1) [circle,inner sep=1.5pt, draw=white,fill= white] {{\tiny $x_{1}$}};
\node (x2) at (1,1) [circle,inner sep=1.5pt, draw=white,fill= white] {{\tiny $x_{2}$}};
\node (x3) at (1,-1) [circle,inner sep=1.5pt, draw=white,fill= white] {{\tiny $x_{3}$}};
\node (x4) at (-1,-1) [circle,inner sep=1.5pt, draw=white,fill= white] {{\tiny $x_{4}$}};
\node (bl) at (-1,-1) [circle] {};
\node (br) at (1,-1) [circle] {};
\node (tl) at (-1,1) [circle] {};
\node (tr) at (1,1) [circle] {};
\node (0) at (0,0) [circle,inner sep=1.5pt, thin,draw=black,fill= black] {};
\draw[-,  thick] (bl) -- (0); 
\draw[-,  thick] (br) -- (0); 
\draw[->,  thick] (0) -- (tl); 
\draw[->,  thick] (0) -- (tr); 
\end{tikzpicture} 
\bigg)
\{ -1\}\, ,
\\
\label{MOYcatDecomp3}&
C \bigg( 
\begin{tikzpicture}[scale=0.5,baseline=-2pt, inner sep=1mm, >=stealth]
\node (x1) at (-1,1) [circle,inner sep=1.5pt, draw=white,fill= white] {{\tiny $x_{1}$}};
\node (x2) at (2.5,1) [circle,inner sep=1.5pt, draw=white,fill= white] {{\tiny $x_{2}$}};
\node (x3) at (2.5,-1) [circle,inner sep=1.5pt, draw=white,fill= white] {{\tiny $x_{3}$}};
\node (x4) at (-1,-1) [circle,inner sep=1.5pt, draw=white,fill= white] {{\tiny $x_{4}$}};
\node (centre) at (0,0.75) [circle] {};
\node (bl) at (-1,-1) [circle] {};
\node (br) at (2.5,-1) [circle] {};
\node (tl) at (-1,1) [circle] {};
\node (tr) at (2.5,1) [circle] {};
\node (0) at (0,0) [circle,inner sep=1.5pt, thin,draw=black,fill= black] {};
\node (02) at (1.5,0) [circle,inner sep=1.5pt, thin,draw=black,fill= black] {};
\draw[->,  thick] (bl) -- (0); 
\draw[->,  thick] (0) -- (tl); 
\draw[->,  thick] (02) -- (br); 
\draw[->,  thick] (tr) -- (02); 
\draw[->,  thick] (0) to [out=45,in=135] (02); 
\draw[->,  thick] (02) to [out=-135,in=-45] (0); 
\end{tikzpicture} 
\bigg)
\cong
C \bigg( 
\begin{tikzpicture}[scale=0.5,baseline=-2pt, inner sep=1mm, >=stealth]
\node (x1) at (-1,1) [circle,inner sep=1.5pt, draw=white,fill= white] {{\tiny $x_{1}$}};
\node (x2) at (1,1) [circle,inner sep=1.5pt, draw=white,fill= white] {{\tiny $x_{2}$}};
\node (x3) at (1,-1) [circle,inner sep=1.5pt, draw=white,fill= white] {{\tiny $x_{3}$}};
\node (x4) at (-1,-1) [circle,inner sep=1.5pt, draw=white,fill= white] {{\tiny $x_{4}$}};
\draw[ ->, thick, shift={(-0,-1.3)} ] (140:1) arc (140:40:1); 
\draw[ ->, thick, shift={(-0,1.3)} ] (-40:1) arc (-40:-140:1); 
\end{tikzpicture}
\bigg)
\oplus
\Bigg[
\bigoplus_{i=0}^{N-3}
C \bigg(  
\begin{tikzpicture}[scale=0.5,baseline=-2pt, inner sep=1mm, >=stealth]
\node (x1) at (-2,1) [circle,inner sep=1.5pt, draw=white,fill= white] {{\tiny $x_{1}$}};
\node (x2) at (-0.4,1) [circle,inner sep=1.5pt, draw=white,fill= white] {{\tiny $x_{2}$}};
\node (x3) at (-0.4,-1) [circle,inner sep=1.5pt, draw=white,fill= white] {{\tiny $x_{3}$}};
\node (x4) at (-2,-1) [circle,inner sep=1.5pt, draw=white,fill= white] {{\tiny $x_{4}$}};
\draw[ ->, thick, shift={(-2.5,0)} ]  (-50:1) arc (-50:50:1); 
\draw[ ->, thick ] (130:1) arc (130:230:1); 
\end{tikzpicture}
\bigg)
\langle 1 \rangle
\{ 3-N+2i\}
\Bigg]\, ,
\\
\label{MOYcatDecomp4}& 
C \Bigg( \!
\begin{tikzpicture}[scale=0.5,baseline=(02.base), inner sep=1mm, >=stealth]
\node (centre) at (0,0.75) [circle] {};
\node (x1) at (-1,3) [circle,inner sep=1.5pt, draw=white,fill= white] {{\tiny $x_{1}$}};
\node (x2) at (0.7,3) [circle,inner sep=1.5pt, draw=white,fill= white] {{\tiny $x_{2}$}};
\node (x3) at (1.4,3) [circle,inner sep=1.5pt, draw=white,fill= white] {{\tiny $x_{3}$}};
\node (x4) at (-1,-1) [circle,inner sep=1.5pt, draw=white,fill= white] {{\tiny $x_{4}$}};
\node (x5) at (0.7,-1) [circle,inner sep=1.5pt, draw=white,fill= white] {{\tiny $x_{5}$}};
\node (x6) at (1.4,-1) [circle,inner sep=1.5pt, draw=white,fill= white] {{\tiny $x_{6}$}};
\node (bl) at (-1,-1) [circle] {};
\node (bm) at (1,-1) [circle] {};
\node (br) at (1,-1) [circle] {};
\node (tl) at (-1,3) [circle] {};
\node (tm) at (1,3) [circle] {};
\node (tr) at (1,3) [circle] {};
\node (0) at (0,0) [circle,inner sep=1.5pt, thin,draw=black,fill= black] {};
\node (02) at (1,1) [circle,inner sep=1.5pt, thin,draw=black,fill= black] {};
\node (03) at (0,2) [circle,inner sep=1.5pt, thin,draw=black,fill= black] {};
\draw[->,  thick] (bl) -- (0); 
\draw[->,  thick] (bm) -- (0); 
\draw[->,  thick] (br) to [out=45,in=-45] (02); 
\draw[->,  thick] (0) to [out=45,in=-135] (02); 
\draw[->,  thick] (02) to [out=135,in=-45] (03); 
\draw[->,  thick] (0) to [out=135,in=-135] (03); 
\draw[->,  thick] (02) to [out=45,in=-45] (tr); 
\draw[->,  thick] (03) -- (tl); 
\draw[->,  thick] (03) -- (tm); 
\end{tikzpicture} 
\;\Bigg)
\oplus C \bigg( 
\begin{tikzpicture}[scale=0.5,baseline=(02.base), inner sep=1mm, >=stealth]
\node (x1) at (0,3) [circle,inner sep=1.5pt, draw=white,fill= white] {{\tiny $x_{1}$}};
\node (x2) at (1,3) [circle,inner sep=1.5pt, draw=white,fill= white] {{\tiny $x_{2}$}};
\node (x3) at (2,3) [circle,inner sep=1.5pt, draw=white,fill= white] {{\tiny $x_{3}$}};
\node (x4) at (0,0) [circle,inner sep=1.5pt, draw=white,fill= white] {{\tiny $x_{4}$}};
\node (x5) at (1,0) [circle,inner sep=1.5pt, draw=white,fill= white] {{\tiny $x_{5}$}};
\node (x6) at (2,0) [circle,inner sep=1.5pt, draw=white,fill= white] {{\tiny $x_{6}$}};
\node (bl) at (0,0) [circle] {};
\node (bm) at (1,0) [circle] {};
\node (br) at (2,-0) [circle] {};
\node (tl) at (0,3) [circle] {};
\node (tm) at (1,3) [circle] {};
\node (tr) at (2,3) [circle] {};
\node (02) at (1.5,1.5) [circle,inner sep=1.5pt, thin,draw=black,fill= black] {};
\draw[ ->, thick ] (bl) -- (tl); 
\draw[->,  thick] (02) to [out=135,in=-90] (tm); 
\draw[->,  thick] (02) to [out=45,in=-90] (tr); 
\draw[->,  thick] (bm) to [out=90,in=-135] (02); 
\draw[->,  thick] (br) to [out=90,in=-45] (02); 
\end{tikzpicture}
\bigg)
\cong
C \Bigg( \;
\begin{tikzpicture}[scale=0.5,baseline=(02.base), inner sep=1mm, >=stealth]
\node (x1) at (-0.3,3) [circle,inner sep=1.5pt, draw=white,fill= white] {{\tiny $x_{1}$}};
\node (x2) at (0.4,3) [circle,inner sep=1.5pt, draw=white,fill= white] {{\tiny $x_{2}$}};
\node (x3) at (2,3) [circle,inner sep=1.5pt, draw=white,fill= white] {{\tiny $x_{3}$}};
\node (x4) at (-0.3,-1) [circle,inner sep=1.5pt, draw=white,fill= white] {{\tiny $x_{4}$}};
\node (x5) at (0.4,-1) [circle,inner sep=1.5pt, draw=white,fill= white] {{\tiny $x_{5}$}};
\node (x6) at (2,-1) [circle,inner sep=1.5pt, draw=white,fill= white] {{\tiny $x_{6}$}};
\node (centre) at (0,0.75) [circle] {};
\node (bl) at (0,-1) [circle] {};
\node (bm) at (0,-1) [circle] {};
\node (br) at (2,-1) [circle] {};
\node (tl) at (0,3) [circle] {};
\node (tm) at (0,3) [circle] {};
\node (tr) at (2,3) [circle] {};
\node (0) at (1,0) [circle,inner sep=1.5pt, thin,draw=black,fill= black] {};
\node (02) at (0,1) [circle,inner sep=1.5pt, thin,draw=black,fill= black] {};
\node (03) at (1,2) [circle,inner sep=1.5pt, thin,draw=black,fill= black] {};
\draw[->,  thick] (bl) to [out=135,in=-135] (02); 
\draw[->,  thick] (bm) -- (0); 
\draw[->,  thick] (br) -- (0); 
\draw[->,  thick] (0) to [out=135,in=-45] (02); 
\draw[->,  thick] (0) to [out=45,in=-45] (03); 
\draw[->,  thick] (02) to [out=135,in=-135] (tl); 
\draw[->,  thick] (02) -- (03); 
\draw[->,  thick] (03) -- (tr); 
\draw[->,  thick] (03) -- (tm); 
\end{tikzpicture} 
\! \Bigg)
\oplus C \bigg( 
\begin{tikzpicture}[scale=0.5,baseline=(02.base), inner sep=1mm, >=stealth]
\node (x1) at (1,3) [circle,inner sep=1.5pt, draw=white,fill= white] {{\tiny $x_{1}$}};
\node (x2) at (2,3) [circle,inner sep=1.5pt, draw=white,fill= white] {{\tiny $x_{2}$}};
\node (x3) at (3,3) [circle,inner sep=1.5pt, draw=white,fill= white] {{\tiny $x_{3}$}};
\node (x4) at (1,0) [circle,inner sep=1.5pt, draw=white,fill= white] {{\tiny $x_{4}$}};
\node (x5) at (2,0) [circle,inner sep=1.5pt, draw=white,fill= white] {{\tiny $x_{5}$}};
\node (x6) at (3,0) [circle,inner sep=1.5pt, draw=white,fill= white] {{\tiny $x_{6}$}};
\node (bl) at (3,0) [circle] {};
\node (bm) at (1,0) [circle] {};
\node (br) at (2,-0) [circle] {};
\node (tl) at (3,3) [circle] {};
\node (tm) at (1,3) [circle] {};
\node (tr) at (2,3) [circle] {};
\node (02) at (1.5,1.5) [circle,inner sep=1.5pt, thin,draw=black,fill= black] {};
\draw[ ->, thick ] (bl) -- (tl); 
\draw[->,  thick] (02) to [out=135,in=-90] (tm); 
\draw[->,  thick] (02) to [out=45,in=-90] (tr); 
\draw[->,  thick] (bm) to [out=90,in=-135] (02); 
\draw[->,  thick] (br) to [out=90,in=-45] (02); 
\end{tikzpicture}
\bigg) \, .
\end{align}
The \textsl{parity} of a resolution~$\Gamma$ is the number of components of the graph obtained from~$\Gamma$ by replacing each vertex 
\begin{minipage}{0.48cm}
\begin{tikzpicture}[scale=0.23,baseline, inner sep=0.1mm, >=stealth]
\node (bl) at (-1,-1) [circle] {};
\node (br) at (1,-1) [circle] {};
\node (tl) at (-1,1) [circle] {};
\node (tr) at (1,1) [circle] {};
\node (0) at (0,0) [circle,inner sep=0.7pt, thin,draw=black,fill= white] {};
\draw[-] (bl) -- (0); 
\draw[-] (br) -- (0); 
\draw[->] (0) -- (tl); 
\draw[->] (0) -- (tr); 
\node (0) at (0,0) [circle,inner sep=0.7pt, thin,draw=black,fill= white] {};
\end{tikzpicture}
\end{minipage} 
 or 
\begin{minipage}{0.48cm}
\begin{tikzpicture}[scale=0.23,baseline, inner sep=0.1mm, >=stealth]
\node (0) at (0,0) [circle,inner sep=0.7pt, thin,draw=black,fill= black] {};
\node (bl) at (-1,-1) [circle] {};
\node (br) at (1,-1) [circle] {};
\node (tl) at (-1,1) [circle] {};
\node (tr) at (1,1) [circle] {};
\draw[-] (bl) -- (0); 
\draw[-] (br) -- (0); 
\draw[->] (0) -- (tl); 
\draw[->] (0) -- (tr); 
\end{tikzpicture}
\end{minipage} 
 by the smoothing 
\begin{minipage}{0.30cm}
\begin{tikzpicture}[scale=0.23,baseline, inner sep=0.1mm, >=stealth]
\draw[ ->, shift={(-2.5,0)} ]  (-50:1) arc (-50:50:1); 
\draw[ -> ] (230:1) arc (230:130:1); 
\end{tikzpicture}
\end{minipage}  
modulo 2. The parity of a link is the parity of any of its resolutions~$\Gamma$.

Next Khovanov and Rozansky define morphisms $\chi_0: \Xcirc \lto \Xbul$ and $\chi_1: \Xbul \lto \Xcirc$ (the explicit matrices are recalled in Appendix \ref{section:stabilisation} below). These morphisms define two-term complexes of matrix factorisations 
\begin{align*}
C[\begin{minipage}{0.48cm}
\begin{tikzpicture}[scale=0.23,baseline, inner sep=0.1mm, >=stealth]
\node (0) at (0,0) [circle,inner sep=1.5pt, draw=white,fill= white] {};
\node (bl) at (-1,-1) [circle,draw=white,fill= white] {};
\node (br) at (1,-1) [circle,draw=white,fill= white] {};
\node (tl) at (-1,1) [circle,draw=white,fill= white] {};
\node (tr) at (1,1) [circle,draw=white,fill= white] {};
\draw[-] (bl) -- (0); 
\draw[->] (br) -- (tl); 
\draw[->] (0) -- (tr); 
\end{tikzpicture}
\end{minipage}] & =\!\!
\xymatrix{%
\Big( 0 \vphantom{\underline{X}} \ar[r] & \underline{\Xcirc}\{1-N\} \ar[r]^-{\chi_{0}} & \Xbul\{-N-1\} \vphantom{\underline{X}} \ar[r] & 0 \vphantom{\underline{X}} \Big) 
}
,  \\
C[\begin{minipage}{0.48cm}
\begin{tikzpicture}[scale=0.23,baseline, inner sep=0.1mm, >=stealth]
\node (0) at (0,0) [circle,inner sep=1.5pt, draw=white,fill= white] {};
\node (bl) at (-1,-1) [circle,draw=white,fill= white] {};
\node (br) at (1,-1) [circle,draw=white,fill= white] {};
\node (tl) at (-1,1) [circle,draw=white,fill= white] {};
\node (tr) at (1,1) [circle,draw=white,fill= white] {};
\draw[->] (bl) -- (tr); 
\draw[-] (br) -- (0); 
\draw[->] (0) -- (tl); 
\end{tikzpicture}
\end{minipage}] & =\!\!
\xymatrix{%
\Big(0 \vphantom{\underline{X}} \ar[r] & \Xbul \vphantom{\underline{X}}\{N-1\} \ar[r]^-{\chi_{1}} & \underline{\Xcirc} \{N-1\}\ar[r] & 0 \vphantom{\underline{X}} \Big) 
}
\end{align*}
where the underlines denote cohomological degree $0$ and grading shifts are inserted to match (\ref{graphreduction}). The \textsl{Khovanov-Rozansky complex} is
%these morphisms categorify the differences on the right-hand side of the identities in~\eqref{graphreduction}. Associating these two-term complexes to the over- and under-crossings of a link diagram allows for the construction of the homological link invariant $H(L)$ as the cohomology of the total complex. This categorifies the $\sln$ polynomial $P_{N}(L)$ in the sense that the Poincar\'e polynomial $\KR_{N}(L)$ of $H(L)$ specialises to $P_{N}(L)=\KR_{N}(L)|_{t=-1}$. 
$$
C(D) = \bigotimes_{c\in\mathcal C} C[c] \, .
$$
Here the tensor product is over the polynomial ring~$R$ of all internal and external variables. By construction $C(D)$ is a $(\nZ\times\nZ\times\nZ_{2})$-graded complex of matrix factorisations of zero, and we denote its components of cohomological degree~$i$ and internal $\nZ$-degree~$j$ by $C^{i,j}(D)$. It is shown in~\cite{kr0401268} that the cohomology of $C(D)$ is concentrated in only one $\nZ_{2}$-degree which is given by the parity of~$L$ and is independent of the choice of link diagram~$D$ of~$L$. This is the \textsl{Khovanov-Rozansky homology $H(L)$}. Its Poincar\'e polynomial is the \textsl{Khovanov-Rozansky invariant}
$$
\KR_{N}(L) = \sum_{i,j\in\nZ} t^{i} q^j \dim_{\nQ}(H^{i,j}(L)) \, . 
$$

There is also a reduced version of Khovanov-Rozansky homology which depends on the choice of a component~$K$ of the link~$L$. If~$i$ is a label assigned to an edge lying on~$K$ in the link diagram~$D$, then we set $\overline{C}(D,K)=\bigotimes_{c\in\mathcal C} C[c] \otimes_{R}(R/(x_{i}))$. The cohomology of $\overline{C}(D,K)$ is another link invariant, and we call it the \textsl{reduced Khovanov-Rozansky homology $\overline{H}(L,K)$}. It is non-trivial in both $\nZ_{2}$-degrees, but there is a simple relation between them, as we will prove in Appendix~\ref{relationReducedUnreduced}: 
\begin{lemma}
\label{Z2gradingReduced}
Let $\overline{H}^k(L,K)$ denote the component of $\overline{H}(L,K)$ in $\nZ_{2}$-degree~$k$. If $p$ denotes the parity of $L$ then there is an isomorphism of $(\nZ\times\nZ)$-graded $\nQ$-vector spaces 
$$
\overline{H}^{p+1}(L,K) \cong \overline{H}^p(L,K)\{N-1\} \, .
$$
\end{lemma}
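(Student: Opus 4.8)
The plan is to compare the two $\Ztwo$-homogeneous pieces of $\overline{H}(L,K)$ one resolution at a time, reducing to a freeness property of the matrix factorisation homology of a closed state web which I will establish by cutting the component $K$ open. Fix the edge $i$ on $K$ and set $A=\nQ[x_i]/(x_i^N)$. Because edge $i$ carries the potential $x_i^{N+1}$, multiplication by $\partial_{x_i}(x_i^{N+1})=(N+1)x_i^N$ is null-homotopic on the tensor factor of $C(D)$ carrying that potential, hence on $C(D)$ and on every total factorisation $C(\Gamma)$ of a resolution; so $x_i^N$ annihilates $H(L)$ and each $H(\Gamma)$, making them $A$-modules. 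For a fixed $\Gamma$ I would feed the short exact sequence of matrix factorisations
$$
0\longrightarrow C(\Gamma)\{2\}\xrightarrow{\,x_i\,} C(\Gamma)\longrightarrow C(\Gamma)/(x_i)\longrightarrow 0
$$
(the $\{2\}$ being $\deg x_i$) into the associated six-term $\Ztwo$-graded exact sequence in cohomology. Since $H(\Gamma)$ is concentrated in the single $\Ztwo$-degree $p$ — the parity of $\Gamma$, which equals that of $L$ — this collapses to
$$
H^{p}\bigl(C(\Gamma)/(x_i)\bigr)\cong H(\Gamma)/x_iH(\Gamma),\qquad
H^{p+1}\bigl(C(\Gamma)/(x_i)\bigr)\cong \bigl(\operatorname{ann}_{x_i}H(\Gamma)\bigr)\{1-N\},
$$
where $\{1-N\}=\{2\}\{-(N+1)\}$ records that the connecting homomorphism carries the internal degree $N+1$ of the matrix factorisation differential of $C(\Gamma)$. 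Both isomorphisms are natural in $\Gamma$ for the maps induced by $\chi_0$ and $\chi_1$, everything being $\nQ[x_i]$-linear.

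The key input is then: \emph{for every resolution $\Gamma$ and every edge $i$ on $K$, $H(\Gamma)$ is a free $A$-module}. Granting this, a free $A$-module $M$ satisfies $\operatorname{ann}_{x_i}M=x_i^{N-1}M\cong (M/x_iM)\{2(N-1)\}$ via multiplication by $x_i^{N-1}$, so combining with the two displays and $\{2(N-1)\}\{1-N\}=\{N-1\}$ gives $H^{p+1}(C(\Gamma)/(x_i))\cong H^{p}(C(\Gamma)/(x_i))\{N-1\}$, naturally in $\Gamma$; taking cohomology with respect to the $\chi$-differentials, which computes $\overline{H}(L,K)$, yields the asserted $\overline{H}^{p+1}(L,K)\cong\overline{H}^{p}(L,K)\{N-1\}$. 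To prove the freeness I would cut the edge $i$ to obtain a web $\widetilde\Gamma$ with two boundary points, so that $C(\Gamma)$ is recovered from $C(\widetilde\Gamma)$ by identifying the two boundary variables with $x_i$; repeatedly applying the categorified MOY relations \eqref{MOYcatDecomp1}--\eqref{MOYcatDecomp4} and circle removal reduces $\widetilde\Gamma$ to a single arc joining its boundary points tensored with closed webs, so $C(\widetilde\Gamma)\simeq\bigoplus_a I\langle\epsilon_a\rangle\{s_a\}$ with $I=\{x_i^{\mathrm{out}}-x_i^{\mathrm{in}},\pi_N\}$ the identity defect of $x_i^{N+1}$; identifying the two boundary variables turns $I$ into $\{0,(N+1)x_i^N\}$, whose cohomology is $A$, whence $H(\Gamma)\cong\bigoplus_a A\langle\epsilon_a\rangle\{s_a\}$ is free.

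I expect the freeness statement to be the main obstacle. Proving it needs the full categorified MOY reduction of a web with two boundary points with all grading shifts tracked; this uses only the relations already employed to define $\KR_N$, so no new ideas are required, but one may also simply quote the known freeness of closed MOY-graph homology over $\nQ[x_i]/(x_i^N)$. A second, more bureaucratic point is the passage from the per-resolution isomorphism to $\overline{H}(L,K)$: the map realising it on matrix factorisation cohomology is essentially multiplication by $x_i^{N-1}$, which vanishes modulo $x_i$ and hence does not lift directly to a chain map on $\overline{C}(D,K)$; the cleanest remedy is to realise the isomorphism instead by the chain endomorphism of $\overline{C}(D,K)$ induced by a homotopy contracting $x_i^N$ on $C(D)$, and to check, using the freeness, that it induces an isomorphism onto the other $\Ztwo$-component.
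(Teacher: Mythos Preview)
Your strategy is essentially the paper's: both arguments combine the long exact sequence coming from multiplication by $x_i$ on $C(\Gamma)$ with the key structural fact that $H(\Gamma)$ is free over $A=\nQ[x_i]/(x_i^N)$, and then pass to $d_\chi$-cohomology by naturality. Two points of comparison are worth making.

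\emph{Freeness.} The paper does not cut the edge open. Instead it restricts to braid closures, runs the MOY induction of \eqref{MOYcatDecomp1}--\eqref{MOYcatDecomp4} on closed braid graphs, and handles the one dangerous case (when $i$ is the loop contracted by a type-I relation) by passing to the orientation-reversed graph $\Gamma^\vee$, whose cohomology agrees with that of $\Gamma$. Your cut-open argument is a reasonable alternative, but as written it assumes that every two-ended open web obtained by cutting $i$ can be reduced by MOY moves to a single arc tensored with closed pieces; that reduction step deserves justification (or you can simply quote the freeness as the paper does via Lemma~\ref{lemma:directsumalgebras}).

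\emph{Naturality.} Your second worry is unnecessary and you should drop the proposed ``homotopy contracting $x_i^N$'' fix. You do not need any map on $\overline{C}(D,K)$ itself: after taking $d$-cohomology you are working with the complex $\bigl(\bigoplus_\Gamma \overline H(\Gamma),\,d_\chi\bigr)$, and it suffices that the per-$\Gamma$ isomorphism commutes with the maps $H(\chi)$. But the isomorphism $H(\Gamma)/x_iH(\Gamma)\to \operatorname{ann}_{x_i}H(\Gamma)$ is multiplication by $x_i^{N-1}$ on the $A$-module $H(\Gamma)$, and $H(\chi)$ is $\nQ[x_i]$-linear (since $\chi$ is a morphism of matrix factorisations over $R$), so naturality is automatic. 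The paper verifies exactly this by decomposing $H(\Gamma)$ into copies of $A$ and observing that any homogeneous $\nQ[x_i]$-linear map $A\{b\}\to A\{b'\}$ is multiplication by some $\lambda x_i^a$; this is just the component-wise version of the same observation.
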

In view of this we define the \textsl{reduced Khovanov-Rozansky invariant $\overline{\KR}_{N}(L,K)$} as the Poincar\'e polynomial of $\overline{H}^p(L,K)$.

\medskip

Finally, let us show that we can compute Khovanov-Rozansky homology by compiling appropriate webs. By definition, in each cohomological degree the components of $C(D)$ are direct sums of the matrix factorisations $C(\Gamma)$ of zero, where~$\Gamma$ is a resolution of the link diagram~$D$. As explained earlier we promote~$\Gamma$ to a state web by decorating it with potentials $x_{i}^{N+1}$ and matrix factorisations $\Xcirc, \Xbul$. In order to compute Khovanov-Rozansky homology the first step is to compile these state webs, i.\,e.~to replace each cohomological component of $C(D)$ by the finite-dimensional $(\nZ\times\nZ)$-graded $\nQ$-vector spaces which are the direct sums of the compilations of the $C(\Gamma)$. 

In the second step we have to compile the differentials (which we denote $d_{\chi}$) of $C(D)$, i.\,e.~replace them by the maps that they induce on the compilations of the state webs; at this point we have a $\ZZ$-graded complex of $\ZZ$-graded $\QQ$-vector spaces, and we simply take cohomology to obtain the link invariant $H(L)$. Since by construction the differentials of $C(D)$ are direct sums of maps of the form $1\otimes\ldots\otimes 1\otimes\chi_{i}\otimes 1\otimes\ldots\otimes 1$, we can compile the $d_{\chi}$ as in the following general situation. 

Let us consider two decorated webs with identical underlying graphs and edge assignments, and whose matrix factorisations $X_{v}$ at vertex~$v$ are the same for all but one vertex~$v_{i}$ to which the second web assigns $X'_{v_{i}}$. We further assume to be given a morphism $\phi:X_{v_{i}}\longrightarrow X'_{v_{i}}$ which induces a map $\Phi=1\otimes\ldots\otimes 1\otimes\phi\otimes 1\otimes\ldots\otimes 1$ between the total matrix factorisations
\begin{align*}
X & =X_{v_{1}}\otimes\ldots\otimes X_{v_{i-1}} \otimes X_{v_{i}} \otimes X_{v_{i+1}}\otimes\ldots\otimes X_{v_{s}} \, , \\
X' & =X_{v_{1}}\otimes\ldots\otimes X_{v_{i-1}} \otimes X'_{v_{i}} \otimes X_{v_{i+1}}\otimes\ldots\otimes X_{v_{s}} \, .
\end{align*}
In the notation of Remark \ref{compileAndSplit} we have idempotents $\idem=\vartheta\circ\psi$ and $\idem'=\vartheta'\circ\psi'$ which can be split in $\hmf(k[\extvar],W)$ by morphisms $f,g, f',g'$ and matrix factorisations~$F, F'$ such that $f\circ g= 1_{F}, f' \circ g' = 1_{F'}$ and $g\circ f=\idem, g' \circ f' = \idem'$. Since $\psi$ is natural (see \cite[Lemma 7.6]{dm1102.2957}) there is a commutative diagram (where we omit grading shifts)
$$
\xymatrix@C+2pc{%
X \ar[d]_{\vartheta}\ar[r]^{\Phi} & X'\ar[d]^{\vartheta'}\\
X \otimes_{k[\intvar]} k[\intvar]/(\intvar^a)\langle n \rangle \ar[d]_{f}\ar[r]^{\Phi \otimes 1} & X' \otimes_{k[\intvar]} k[\intvar]/(\intvar^a)\langle n \rangle \ar[d]^{f'}\\
F \ar[r]^{f' \circ (\Phi \otimes 1) \circ g} & F'
}
$$
and thus the \textsl{compiled morphism} $f'\circ(\Phi\otimes 1)\circ g$ is what~$\Phi$ induces between the web compilations~$F$ and~$F'$. Note that we need no knowledge of the maps $\vartheta,\psi$ from Theorem~\ref{theorem:compilation_main} in this computation. 

In this way we compile every differential $d_{\chi}$ in $C(D)$ and compute the $\sln$ link homology $H(L)$. Its reduced variant is obtained in the same fashion after one sets $x_{i}=0$ everywhere in $C(D)$. We have implemented this method to automatically compute the invariants $\KR_{N}$ and $\overline{\KR}_{N}$ in Singular, see~\cite{cmWebCompileCode}. Another option to compute $H(L)$ would have been to use the MOY relations~\eqref{MOYcatDecomp1}--\eqref{MOYcatDecomp4} algorithmically and thus reduce every link diagram to unlinked circles, to which~\cite{kr0401268} associate a shift of the Jacobian of the potential $x^{N+1}$. We prefer our approach as it allows us to compile arbitrary webs (and in particular the invariant associated to arbitrary tangles in \cite{kr0401268}) and may be straightforwardly applied to compute other categorified link invariants such as those of~\cite{kr0701333, w0907.0695}.

\subsection{The explicit idempotents}\label{section:explicitidempotents}

We continue with the earlier notation, so $\cat{F}$ is a decorated web with total factorisation $X$. In this section we present a pair $(C, \idem)$ compiling $\cat{F}$.

The starting point is a list of polynomials acting null-homotopically on $X$, and a corresponding list of null-homotopies. Let $v$ be a vertex, $e$ an edge beginning at $v$ and $x$ a variable lying on $e$. Then $(X_v, d_v)$ denotes the matrix factorisation assigned to $v$. Certainly none of the potentials associated to the other edges incident at $v$ involve $x$, so from the identity $d_v^2 = (\sum_{f \in O_v} W_f - \sum_{f \in I_v} W_f) \cdot 1_{X_v}$ we deduce that
\[
\partial_x(d_v) \cdot d_v + d_v \cdot \partial_x(d_v) = \partial_x(W_e) \cdot 1_{X_v}\,.
\]
That is to say, $\partial_x(W_e)$ acts null-homotopically on $X_v$ with null-homotopy $\partial_x(d_v)$. But then the same is true of $X$, of which $X_v$ is a tensor factor, provided we interpret $\partial_x(d_v)$ as $1 \otimes \ldots \otimes 1\otimes \partial_x(d_v) \otimes 1\otimes \ldots \otimes 1$ on $X$ with suitable factors of the identity inserted, and Koszul signs.

Let $I$ denote the ideal in $k[\intvar]$ generated by the set of polynomials $\bs{\delta} = \{\partial_x(W_e)\}_{x,e}$ where $e$ ranges over all \textsl{internal} edges in the web and $x$ over each variable lying on $e$. We refer to $J = k[\intvar]/I$ as the \textsl{internal Jacobi algebra} of $\cat{F}$. It is the tensor product of the edge Jacobi algebras, 
\[
J = \bigotimes_{e} k[\bs{x}_e]/(\partial_{\bs{x}_e} W_e) \, ,
\]
where $e$ ranges over all internal edges, and so $J$ is finite-dimensional by hypothesis. Note that
\[
k[\bs{x}]/I = k[\extvar] \otimes_k J
\]
is a graded free $k[\extvar]$-module of finite rank, and consequently the quotient $X/\bs{\delta} X = X \otimes_{k[\bs{x}]} k[\bs{x}]/I$ is a finite-rank graded matrix factorisation of $W$ over $k[\extvar]$. At this point we can apply the main result of \cite{dm1102.2957} to give an explicit idempotent
\[
\bar\idem = \frac{1}{n!}(-1)^{\binom{n+1}{2}} \Big(\prod_{x,v} \partial_x(d_v)\Big) \circ \At(X)^n\,,
\]
such that the pair $(X/(\bs{\delta} X)\langle n \rangle, \bar\idem)$ compiles the web.\footnote{Strictly speaking this is not correct, as the pair $(X/(\bs{\delta} X)\langle n \rangle, \bar\idem)$ will split in the category of graded matrix factorisations not to $X$ but to $X\{ p \}$ for some nonzero integer $p$. We choose not to address this point since this is not the version of the idempotent that we actually compute with. In our approach below such subtleties will be taken into account.} Here $n$ is the total number of internal variables, the product is over all internal variables $x$ lying on an edge with origin $v$, and $\At(X)$ is the Atiyah class, see~\eqref{Atiyahclass} below. For theoretical purposes (for example if one wanted to pursue a residue formula for the Euler characteristic of $X$, or the correlators of \cite{kr0404189}) this compilation is the right one. But for implementation on a computer it is better to choose the sequence $\bs{\delta}$ differently, so that the connections in the construction (see below) become simple operations on polynomials.
\\

Let $x$ be an internal variable lying on an edge $e$. Since $k[\bs{x}_e]/(\partial_{\bs{x}_e} W_e)$ is finite-dimensional, there is an integer $a > 0$ such that $x^a \in (\partial_{\bs{x}_e} W_e)$. For simplicity, let us assume that this $a$ has been chosen large enough so that it works for every internal variable,\footnote{For $\sln$ homology, the integer $N$ will work: the partial derivatives $\partial_x(W_e)$ will be $N$-th powers of ring variables.} and write
\[
x^a = \sum_{y \in \bs{x}_e} b_{xy} \cdot \partial_y(W_e)
\]
for some polynomials $b_{xy}$. If $e$ has origin $v$ and we define $\lambda_x = \sum_y b_{xy} \cdot \partial_y(d_v)$ then
\[
\lambda_x \cdot d_v + d_v \cdot \lambda_x = x^a \cdot 1_{X_v}\,.
\]
We also write $\lambda_x$ for the map $1 \otimes \ldots \otimes 1 \otimes\lambda_x \otimes 1\otimes \ldots \otimes 1$ on $X$, which gives a null-homotopy for $x^a \cdot 1_X$. We can now apply the main theorem of \cite{dm1102.2957} to the sequence of polynomials $\intvar^a = \{ x^a \}_{x \in \intvar}$ and null-homotopies $\{ \lambda_x \}_{x \in \intvar}$. Again the formula for the idempotent will involve an Atiyah class, and to make this explicit we choose a connection on $X$. The algebra $k[\intvar]$ is a free module over the subring $k[\intvar^a] = k[x_1^a, x_2^a, \ldots]$ with a basis given by monomials in the internal variables $\bs{x}^\beta$ where the degree of each variable is smaller than~$a$, that is, $\beta_i < a$ for each $i$. From now on, let $\beta$ stand for such tuples of integers.

The K\"ahler differential on $k[\intvar^a]$ extends using this basis to a $k$-linear flat connection
\[
\nabla: k[\intvar] \lto k[\intvar] \otimes_{k[\intvar^a]} \Omega^1_{k[\intvar^a]/k}
\]
defined for a polynomial $f(\bs{x})$ in the internal variables by
\[
\nabla(f) = \sum_{x, \beta} \frac{\partial f_\beta}{\partial x^a} \bs{x}^\beta \ud (x^a)\,,
\]
where we write $f = \sum_\beta f_\beta \bs{x}^\beta$ for polynomials $f_\beta = f_\beta(\bs{x}^a)$ in $k[\intvar^a]$ and monomials $\bs{x}^\beta$ as above, restricted to degree smaller than~$a$ in each variable, and the sum is over all internal variables $x$. For example, if $x$ is an internal variable then
\begin{align*}
\nabla( 1 + x^2 + x^a + 2x^{a+2} ) &= \nabla( (1 + x^a) \cdot 1 + (1 + 2x^a) \cdot x^2 )\\
&= (1 + 2x^2 ) \cdot \ud(x^a)\,.
\end{align*}
Tensoring with $k[\extvar]$ we get a flat $k$-linear connection (writing $\Omega^1$ for $\Omega^1_{k[\extvar,\intvar^a]/k[\extvar]}$)
\[
\nabla: k[\bs{x}] \lto k[\bs{x}] \otimes_{k[\extvar,\intvar^a]} \Omega^1\,,
\]
which is ``standard'' in the sense of \cite[Section 8.1]{dm1102.2957} (essentially, a hypothesis of zero characteristic). The upshot is that for each internal variable $x$ we have defined a $k[\extvar]$-linear operator $\partial_{x^a}$ on $k[\bs{x}]$. It is obvious that $\partial_{x^a}$ is, modulo $x^a$, just division by $x^a$ without remainder:

\begin{lemma}\label{lemma:division_woremainder} Given $f \in k[\bs{x}]$ write $f = x^a g + h$ for a polynomial $h$ of $x$-degree smaller than~$a$. Then
\[
\partial_{x^a}(f) = x^a \cdot \partial_{x^a}(g) + g\,.
\]
\end{lemma}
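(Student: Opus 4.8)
The plan is to reduce the statement to two elementary facts about the operator $\partial_{x^a}$ that follow directly from its construction. Recall that $k[\bs{x}]$ is free over $k[\extvar,\intvar^a]$ on the monomials $\bs{x}^\beta$ with every exponent strictly less than $a$, that $\Omega^1 = \Omega^1_{k[\extvar,\intvar^a]/k[\extvar]}$ is free on the symbols $\ud(x^a)$ indexed by the internal variables, and that $\partial_{x^a}(f)$ is by definition the coefficient of $\ud(x^a)$ in $\nabla(f)$; equivalently, writing $f = \sum_\beta f_\beta\,\bs{x}^\beta$ with $f_\beta \in k[\extvar,\intvar^a]$, one has $\partial_{x^a}(f) = \sum_\beta \frac{\partial f_\beta}{\partial(x^a)}\,\bs{x}^\beta$, where $\frac{\partial}{\partial(x^a)}$ is the ordinary partial derivative on $k[\extvar,\intvar^a]$ with the $x_i^a$ treated as independent indeterminates.

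First I would record the Leibniz-type identity $\partial_{x^a}(x^a g) = g + x^a\,\partial_{x^a}(g)$, valid for every $g \in k[\bs{x}]$. This is immediate from the fact that $\nabla$ is a connection over $k[\extvar,\intvar^a]$: applying $\nabla(rg) = r\,\nabla(g) + g\,\ud r$ with $r = x^a$ and reading off the $\ud(x^a)$-component gives the claim; alternatively, multiplication by $x^a$ sends $\bs{x}^\beta$ to $x^a\bs{x}^\beta$, keeping the same $\beta$ and replacing the coefficient $f_\beta$ by $x^a f_\beta$, after which the ordinary product rule for $\frac{\partial}{\partial(x^a)}$ finishes it. Second, I would show that $\partial_{x^a}(h) = 0$ whenever $h$ has $x$-degree strictly less than $a$: such an $h$ lies in $k[\extvar,\intvar\setminus\{x\}]\cdot\{1,x,\dots,x^{a-1}\}$, and expanding the $k[\extvar,\intvar\setminus\{x\}]$-part over its monomial basis over $k[\extvar,\intvar^a\setminus\{x^a\}]$ exhibits $h$ as a $k[\extvar,\intvar^a\setminus\{x^a\}]$-linear combination of basis monomials $\bs{x}^\beta$; every coefficient then avoids the indeterminate $x^a$ and is therefore annihilated by $\frac{\partial}{\partial(x^a)}$, so $\partial_{x^a}(h) = 0$.

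With these in hand the lemma is immediate: given $f = x^a g + h$ with $h$ of $x$-degree less than $a$, apply $\partial_{x^a}$ and use $k$-linearity together with the two facts to obtain $\partial_{x^a}(f) = \bigl(g + x^a\,\partial_{x^a}(g)\bigr) + 0$, as asserted. The only point requiring a little care is the bookkeeping in the second fact — checking that the coefficients occurring in the free-module expansion of an element of $x$-degree $<a$ genuinely involve none of the $x^a$; once the two bases are lined up this is forced, and everything else is a one-line computation.
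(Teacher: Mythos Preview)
Your proof is correct. The paper itself gives no proof of this lemma, prefacing it only with the remark that ``it is obvious that $\partial_{x^a}$ is, modulo $x^a$, just division by $x^a$ without remainder''; your argument via the Leibniz rule for the connection together with the vanishing of $\partial_{x^a}$ on polynomials of $x$-degree $<a$ is exactly the natural verification of this obvious fact, and the bookkeeping you flag (that the basis coefficients of such an $h$ genuinely avoid $x^a$) is handled correctly by the uniqueness of the free-module expansion.
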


Using our fixed homogeneous $k[\bs{x}]$-basis for $X$ we extend $\nabla$ to a flat $k$-linear connection
\[
\nabla: X \lto X \otimes_{k[\extvar,\intvar^a]} \Omega^1\,.
\]
The \textsl{Atiyah class} 
\begin{equation}
\label{Atiyahclass}
\At(X) := [d_X, \nabla]
\end{equation}
is a $k[\extvar,\intvar^a]$-linear morphism of matrix factorisations, which up to homotopy does not depend on the choice of connection. Composing the Atiyah class with itself $n$~times, where $n$ is the number of internal variables, gives a map $X \lto X \otimes \Omega^n \cong X$ where we implicitly choose an ordering $x_1, x_2, \ldots,x_n$ of the internal variables and identify $\ud(x_1^a) \wedge \ud(x_2^a) \wedge \ldots \wedge \ud(x_n^a)$ with $1$. This map $\At(X)^n$ induces a $k[\extvar]$-linear map on the quotient
\[
X/(\intvar^a X) = X \otimes_{k[\intvar]} k[\intvar]/(\intvar^a)\,
\]
which is a finite-rank graded matrix factorisation of $W$ over $k[\extvar]$. The homotopies $\lambda_x$ also give $k[\extvar]$-linear maps on this quotient, and we define a $k[\extvar]$-linear endomorphism of $X/(\intvar^a X)$ by
\begin{equation}
\idem = \frac{1}{n!}(-1)^{\binom{n+1}{2}} \Big(\prod_x \lambda_x \Big) \circ \At(X)^n\,.
\end{equation}
With this notation:

\begin{theorem}\label{theorem:compilation_main} There is a diagram in $\HMF(k[\extvar], W)$
\begin{equation}
\label{eq:compilation_main1}
\xymatrix@C+3pc{
X/(\intvar^a X)\{ n(c - 2a) \}\langle n \rangle \ar@<-1ex>[r]_-{\psi} & \varphi_*(X) \ar@<-1ex>[l]_-{\vartheta}
}
\end{equation}
with $\vartheta \circ \psi = \idem$ and $\psi \circ \vartheta = 1_{\varphi_*(X)}$. That is, the pair
\[
\big(X/(\intvar^a X)\{ n(c - 2a) \}\langle n \rangle, \idem\big)
\]
compiles the decorated web $\cat{F}$.
\end{theorem}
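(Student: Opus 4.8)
The plan is to deduce the statement directly from the main theorem of~\cite{dm1102.2957}, applied to the data assembled above; the work then consists of checking that this data meets the hypotheses and of tracking the grading shifts. Recall the shape of the relevant result of~\cite{dm1102.2957}: given a matrix factorisation over a polynomial ring $k[\bs{x}]$ of a potential $W$ lying in a subring $k[\extvar]$ (with inclusion $\varphi$), a finite sequence of homogeneous elements of $k[\intvar]$ whose vanishing makes $k[\bs{x}]$ into a finite free $k[\extvar]$-module, a choice of null-homotopy for the action of each of these elements on the factorisation, and a standard connection, one obtains morphisms $\psi$ and $\vartheta$ in $\HMF(k[\extvar],W)$ between $\varphi_*(X)$ and the finite-rank factorisation obtained by killing the chosen sequence, together with $\psi\circ\vartheta = 1$ and $\vartheta\circ\psi = \idem$ for an explicit idempotent built from the null-homotopies and iterated Atiyah classes. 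So first I would identify each ingredient: the sequence is $\intvar^a = (x_1^a,\ldots,x_n^a)$, the null-homotopies are the $\lambda_x$, the connection is $\nabla$, and the candidate idempotent is the $\idem$ displayed above.

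Next I would verify the three hypotheses. Since $(x_1^a,\ldots,x_n^a)$ is a regular sequence in $k[\intvar]$, the quotient $k[\bs{x}]/(\intvar^a) = k[\extvar]\otimes_k k[\intvar]/(\intvar^a)$ is graded free of rank $a^n$ over $k[\extvar]$, so $X/(\intvar^a X)$ is indeed a finite-rank graded matrix factorisation of $W$ over $k[\extvar]$. That $x^a$ acts null-homotopically on $X_v$, hence on $X$ via $1\otimes\cdots\otimes\lambda_x\otimes\cdots\otimes 1$ with Koszul signs, with homotopy $\lambda_x = \sum_y b_{xy}\,\partial_y(d_v)$, is the identity $\lambda_x\circ d_v + d_v\circ\lambda_x = x^a\cdot 1_{X_v}$ recorded above, itself just the Leibniz rule for $d_v^2 = W_v\cdot 1_{X_v}$ combined with $x^a = \sum_y b_{xy}\,\partial_y(W_e)$. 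And the flat $k$-linear connection $\nabla$ built from the $k[\intvar^a]$-basis $\{\bs{x}^\beta : \beta_i < a\}$ of $k[\intvar]$ is $k[\extvar,\intvar^a]$-linear and standard in the sense of \cite[Section~8.1]{dm1102.2957} because $k$ has characteristic zero, its associated operators $\partial_{x^a}$ being the division-without-remainder maps of Lemma~\ref{lemma:division_woremainder}.

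With the hypotheses in hand, the theorem of~\cite{dm1102.2957} produces $\psi$, $\vartheta$ and the relations $\psi\circ\vartheta = 1_{\varphi_*(X)}$, $\vartheta\circ\psi = \idem$, where $\idem = \frac{1}{n!}(-1)^{\binom{n+1}{2}}\bigl(\prod_x\lambda_x\bigr)\circ\At(X)^n$ after identifying $\ud(x_1^a)\wedge\cdots\wedge\ud(x_n^a)$ with $1$; idempotency up to homotopy is then automatic from $\psi\circ\vartheta = 1$. Finally I would pin down the shift by bidegree bookkeeping: the differential has bidegree $(1,c)$ and each $x^a$ has degree $2a$, so $\At(X)^n$ (after the identification above) is an odd operator of internal degree $n(c-2a)$ and each $\lambda_x$ is an odd operator of internal degree $2a-c$, which both makes $\idem$ a genuine bidegree-$(0,0)$ endomorphism of $X/(\intvar^a X)$ and forces the source of $\psi$, $\vartheta$ to carry the shift $\{n(c-2a)\}\langle n\rangle$.

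The hard part, such as it is, lies in the translation between the two papers: one must confirm that the computationally convenient choices made here --- the sequence $\intvar^a$, the homotopies $\lambda_x$, the explicit connection $\nabla$ --- genuinely fall under the abstract hypotheses of~\cite{dm1102.2957}, which are stated for an arbitrary null-homotopic sequence and standard connection, and one must check that the normalisation $(-1)^{\binom{n+1}{2}}/n!$ and the precise shift $\{n(c-2a)\}\langle n\rangle$ emerge, rather than the (different) data attached to the more canonical idempotent $\bar\idem$ built from the sequence $\bs{\delta}$ and the homotopies $\partial_x(d_v)$. Because the construction of~\cite{dm1102.2957} is insensitive, up to homotopy and up to precisely this kind of shift, to the choice of null-homotopic sequence and of connection, this reduces to careful degree counting rather than a new idea; it is the only step that is not a direct quotation.
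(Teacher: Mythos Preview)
Your proposal is correct and follows essentially the same route as the paper: invoke the main theorem of~\cite{dm1102.2957} for the sequence $\intvar^a$, null-homotopies $\lambda_x$, and standard connection $\nabla$, then do the bidegree bookkeeping to see that $\idem$ (and likewise $\psi,\vartheta$) has bidegree $(0,0)$ once the shift $\{n(c-2a)\}\langle n\rangle$ is in place. One tiny slip: $\At(X)^n$ has $\Ztwo$-degree $n$, not $1$, so it is ``odd'' only when $n$ is; this does not affect your argument since the suspension $\langle n\rangle$ absorbs exactly this degree.
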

\begin{proof}
This follows from the main theorem of \cite{dm1102.2957}, with the caveat that the theorem just cited only discusses ungraded matrix factorisations. Let us justify why $\idem$ has degree zero; the same argument will show that the specific maps $\vartheta$ and $\psi$ given in \textsl{ibid.} both have degree zero.

For each internal variable $x$, $\lambda_x$ has bidegree $(-1, 2a - c)$ on $X$, so the product over $x$ has bidegree $(-n, n(2a - c))$. The degree of $\At(X)^n = [d_X, \nabla]^n$ has a contribution of $(n, nc)$ from the $n$ copies of $d_X$ plus a contribution from the operator $\partial_{x^a}$ for each internal variable; each such operator has degree $-2a$, so $\At(X)^n$ has bidegree $(n, n(c-2a))$ and the product $\idem$ has bidegree $(0,0)$.
\end{proof}

\begin{remark} A computer understands only matrices, so let us spell out how we write $\idem$ as a matrix. Let $\{ \xi_j \}_j$ denote our homogeneous basis for $X$ as a $k[\bs{x}]$-module and let $Q$ be the matrix of $d_X$ in this basis. Then with $\bs{x}^\beta$ denoting a monomial in the internal variables with all exponents $\beta_i < a$, we see that $\{\bs{x}^\beta\}_\beta$ gives a $k$-basis of $k[\intvar]/(\intvar^a)$ and therefore $\{ \bs{x}^\beta \xi_j \}_{\beta, j}$ is a $k[\extvar]$-basis for $X/(\intvar^a X)$.

The matrix $Q'$ of the differential $d_X$ in this basis is obtained from $Q$ by a process we call~\textsl{inflation}: $Q$ is a matrix of finite size with entries in $k[\bs{x}] = k[\extvar] \otimes_k k[\intvar]$. For any monomial $\bs{x}^\alpha$ in the internal variables let $M_{\bs{x}^\alpha}$ be the scalar matrix representing the $k$-linear map of multiplication with $\bs{x}^\alpha$ on~$k[\intvar]/(\intvar^a)$. We can write every entry of~$Q$ in the form $\sum_{\alpha} p_{\alpha} \bs{x}^\alpha$ with $p_{\alpha}\in k[\extvar]$. Then $Q'$ is obtained from $D$ by replacing every such entry by the ``inflated'', matrix-valued entry $\sum_{\alpha} p_{\alpha} M_{\bs{x}^\alpha}$. 
The same kind of inflation gives us the matrix of $\lambda_x$ in the new basis.

Next we index the variables $\intvar = \{x_1,x_2,\ldots,x_n\}$ and use \cite[Corollary 10.4]{dm1102.2957} to write
\[
\idem = \frac{1}{n!}(-1)^{\binom{n}{2}} \sum_{\tau \in S_n} \textup{sgn}(\tau)\Big( \prod_{i=1}^n \lambda_{x_i} \Big) [\partial_{\tau(1)}, d_X] \ldots [\partial_{\tau(n)}, d_X]
\]
where $\partial_j = \partial_{x_j}$. Let $\sigma: X/(\intvar^a X) \lto X$ be the $k[\extvar]$-linear map defined by $\sigma( \bs{x}^\beta \xi_j ) = \bs{x}^\beta \xi_j$ and let $\pi: X \lto X/(\intvar^a X)$ be the projection. The commutator $[\partial_j, d_X]$ on $X/(\intvar^a X)$ is equal to the composite $\pi \circ \partial_j \circ d_X \circ \sigma$. Let $\textup{div}_j(f)$ denote the division of $f$ by $x_j^a$ without remainder and for any monomial $\bs{x}^\alpha$ in the internal variables let $L_{j,\bs{x}^\alpha}$ be the scalar matrix representing the $k$-linear map on $k[\intvar]/(\intvar^a)$ which is multiplication by $\bs{x}^\alpha$ followed by $\textup{div}_j$. Since $\pi \circ \partial_j = \pi \circ \textup{div}_j$ (Lemma~\ref{lemma:division_woremainder}) the matrix of $[\partial_j, d_X]$ in the new basis is obtained from $Q$ by writing every entry of $Q$ in the form $\sum_{\alpha} p_{\alpha} \bs{x}^\alpha$ with $p_{\alpha}\in k[\extvar]$ and then replacing such an entry by the matrix $\sum_{\alpha} p_{\alpha} L_{j,\bs{x}^\alpha}$.

Multiplying together the matrices for the $\lambda_{x_j}$ and $[\partial_j, d_X]$ we have the matrix for $\idem$.
\end{remark}

Note that in general~$\idem$ the identity $\idem^2 = \idem$ only holds up to homotopy. However, we can use the method of lifting modulo a nilpotent ideal~\cite[Section~3.6]{LambekRingsModules} to find an explicit representative~$\idem'$ of the class of~$\idem$ that is strictly idempotent. Then one can compute a splitting of~$\idem'$ using standard methods. 

The conclusion is that we can algorithmically compile any decorated web, i.\,e.~for arbitrary tensor products of matrix factoriations one can explicitly compute a homotopy equivalent finite-rank matrix factorisation. In practice it is more efficient to compile a web by repeatedly computing and splitting idempotents locally instead of doing it ``all at once'' for the total matrix factorisation. This minimises the size of the involved ``inflated'' matrices and hence the computing time, and can be done as follows: we choose a path through the underlying graph that connects all vertices, and begin by reducing the tensor product of the first two matrix factorisations to finite rank (where we can apply Theorem~\ref{theorem:compilation_main} to reduce the pushforward one variable at a time). Then we compute the tensor product of the result with the next matrix factorisation in the path along the same lines, and iterate the procedure until the whole web is compiled. 

We have implemented this construction in the computer algebra system Singular, and we refer to~\cite{cmWebCompileCode} for a detailed description as well as several commented examples.

\section{Computational results}
\label{compres}

We now present our results for the $\sln$ Khovanov-Rozansky homology of all prime links with up to~6 crossings. The explicit Poincar\'e polynomials are listed in Tables~\ref{unreducedKR} and~\ref{reducedKR}. 

As discussed in the introduction, our construction via the splitting of idempotents provides the first method to compute Khovanov-Rozansky invariants directly from the original definition in~\cite{kr0401268}. However, there have been previous results and proposals for Khovanov-Rozansky invariants for certain links which relied on indirect arguments. Before we will discuss our results for links whose Khovanov-Rozansky invariants had not been studied before, we shall first recall the known results and proposals, and compare them with our own direct computations. 

So far the most extensive results were obtained in~\cite{r0508510, r0607544}. There it was shown that the reduced Khovanov-Rozansky invariants $\overline{\KR}_{N}$ of 2-bridge knots can be expressed in terms of their Homfly polynomial and signature (see e.\,g.~\cite{kawauchibook} for the definitions), and a similar result is true for 2-bridge links. Furthermore, explicit expressions were obtained for the unreduced invariants $\KR_{N}$ with $N>4$ for torus knots $T_{2,m}$ and the figure-eight knot $4_{1}$. For links with up to 6 crossings and small values of~$N$ we can verify these results and extend them to the cases $N=3$ and $N=4$ where necessary. We refer to Tables~\ref{unreducedKR} and~\ref{reducedKR} for further details. 

The expressions for $\KR_{N}(T_{2,3})$ and $\KR_{N}(T_{2,5})$ were already given in~\cite{gsv0412243}. Explicit proposals were also presented for reduced Khovanov-Rozansky invariants of further torus knots~\cite{dgr0505662, as1105.5117, dbmmss1106.4305} and all prime knots with up to ten crossings~\cite{dgr0505662}; unreduced invariants for the Hopf link were proposed in~\cite{gikv0705.1368}. For small numbers of crossings and small values of~$N$ our results verify all these proposals.

\medskip

Let us now move on to a discussion of Khovanov-Rozansky invariants that have not been previously studied in the literature. Our method allows to compute reduced and unreduced homologies alike, and it does not discriminate against any types of links. The main limiting factors for the computations to finish on the ordinary PCs we had access to are the number of crossings and the value of~$N$. In the language of Section~\ref{compilewebs}, these determine the number of vertices in the webs to be compiled and the size of ``inflated'' matrices, respectively. Currently this allows us to calculate Khovanov-Rozansky invariants for links with up to 6 crossings and 3 components; future computer generations will push these limits further. 

The unreduced and reduced Khovanov-Rozansky invariants that we obtained are collected in the tables below. For links with more than one component it matters how the components are oriented \textsl{relatively} to one another. We indicate this by referring to different ``versions'' of all the links with at least two components in the tables. For example, $4_{1}^2\,\text{{\footnotesize (v1)}}$ and $4_{1}^2\,\text{{\footnotesize (v2)}}$ denote the two versions of Solomon's link. Note also that we do not list their mirrors separately as the mirror has the same Khovanov-Rozansky invariant after the substitutions $q\longmapsto q^{-1}$, $t\longmapsto t^{-1}$. 

We conclude by listing some observations and comments on our results; we let the Tables~\ref{unreducedKR} and~\ref{reducedKR} speak for themselves otherwise. 

\begin{itemize}
\item 
In all our examples the choice of marked component for reduced Khovanov-Rozansky homology does not affect the invariants, i.\,e.~$\overline{\KR}_{N}(L,K) = \overline{\KR}_{N}(L,K')$ for all components $K,K'$ of the link~$L$. 
\item 
Unreduced Khovanov-Rozansky homology is non-trivial only in one $\nZ_{2}$-degree, but reduced homology is non-trivial in both $\nZ_{2}$-degrees. In fact the even and odd components are the same up to a grading shift by $N-1$, see Lemma~\ref{Z2gradingReduced}. For this reason we defined and list the Poincar\'e polynomials $\overline{\KR}_{N}$ only for the $\nZ_{2}$-degree given by the parity of the link. The invariant for the opposite $\nZ_{2}$-degree is then obtained by multiplication with $q^{N-1}$. 
\item 
The choice of relative orientation of components, called the version of a link above, can be detected by Khovanov-Rozansky homology for some but not all links. More precisely, both the reduced and unreduced invariants depend on the version in the case of the links $4_{1}^2, 6_{1}^2, 
% 6_{2}^2,   [the two "versions" are related by (q,t)->(1/q,1/t)]
6_{3}^2, 6_{1}^3, 6_{2}^3, 6_{3}^3$ in our examples. 
\item 
Also for some examples that are not covered by the work of~\cite{r0508510, r0607544} do we offer expressions for their Khovanov-Rozansky invariants for arbitrary~$N$. In some cases these are simply guesses that are true for all values of~$N$ that we considered. 

However, for the 2-component links $4_{1}^2\,\text{{\footnotesize (v1)}}$, $4_{1}^2\,\text{{\footnotesize (v2)}}$, $5_{1}^2\,\text{{\footnotesize (v1)}}$, $5_{1}^2\,\text{{\footnotesize (v2)}}$, $6_{3}^2\,\text{{\footnotesize (v1)}}$ we took advantage of~\cite[Eq.\,(5.5)]{gsv0412243} which proposes for any 2-component link~$L$ with components $K_{1}$ and $K_{2}$ a relation between $\KR_{N}(L)$ on the one hand and $\KR_N(K_{1})$, $\KR_N(K_{2})$ and a certain expression involving integers $D_{Q,s,r}$ that count BPS states on the other hand. For the examples mentioned it is straightforward to extract integers $D_{Q,s,r}$ from our results for $\KR_{2}(L)$ and $\KR_{3}(L)$ such that~\cite[Eq.\,(5.5)]{gsv0412243} is satisfied, and this then in turn allows to use~\cite[Eq.\,(5.5)]{gsv0412243} to propose an expression $\KR_{N}(L)$ for arbitrary~$N$.\footnote{We also note that the parameter~$\alpha$ left unexplained in~\cite{gsv0412243} turns out to be twice the linking number in our examples.}
\item 
An interesting question is whether Khovanov-Rozansky homology can distinguish mutant pairs of links. It is known that $\overline{\KR}_{N}$ for odd~$N$ is invariant under mutations~\cite{j1101.3302}, and that $\KR_{2}$ cannot resolve the simplest mutant pairs which have 11 crossings. Thus testing whether Khovanov-Rozansky homology can detect mutation is presently beyond the capabilities of our code when run on ordinary PCs. 
\end{itemize}

{\footnotesize 
\begin{longtable}{p{0.07\textwidth}|p{0.87\textwidth}} 
\caption{Unreduced Khovanov-Rozansky $\sln$ invariants for prime links up to 6 crossings}
\label{unreducedKR}
\\
link $L$ & $\KR_{N}(L)$ \\
\hline\hline
\endfirsthead
link $L$ & $\KR_{N}(L)$ \\
\hline\hline
\endhead
\hline\hline
\endfoot
%%%%%%%%%%%%%%%%%%%%%%%%%%%%%%%%%%%%%%%%%%%%%%%%%%%%
$2_{1}^2\,\text{{\tiny (v1\&v2)}}$ 
\includegraphics[scale=0.07,angle=0]{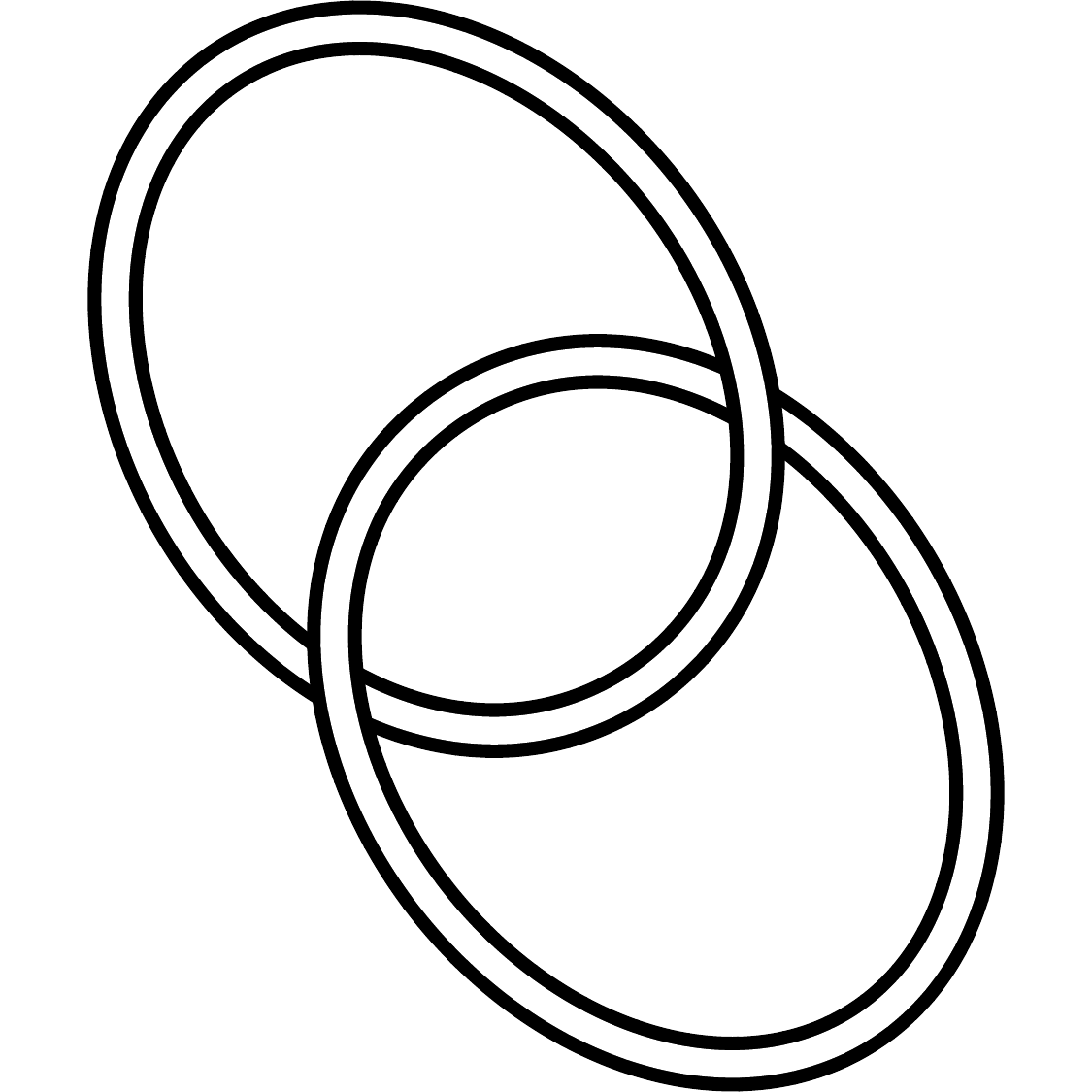} 
&
\newline 
$
\displaystyle{%
q^{N-1} \Big(\frac{q^N-q^{-N}}{q-q^{-1}}\Big) + q^{2N} \Big(\frac{q^N-q^{-N}}{q-q^{-1}}\Big)^2 t^2 - q^{N+1} \Big(\frac{q^N-q^{-N}}{q-q^{-1}}\Big) t^2
}
$
\newline\newline\newline
Checked up to $N=11$. Agrees with proposal of~\cite{gikv0705.1368}.
\\
\hline
%%%%%%%%%%%%%%%%%%%%%%%%%%%%%%%%%%%%%%%%%%%%%%%%%%%%
$3_{1}$ 
\includegraphics[scale=0.07,angle=0]{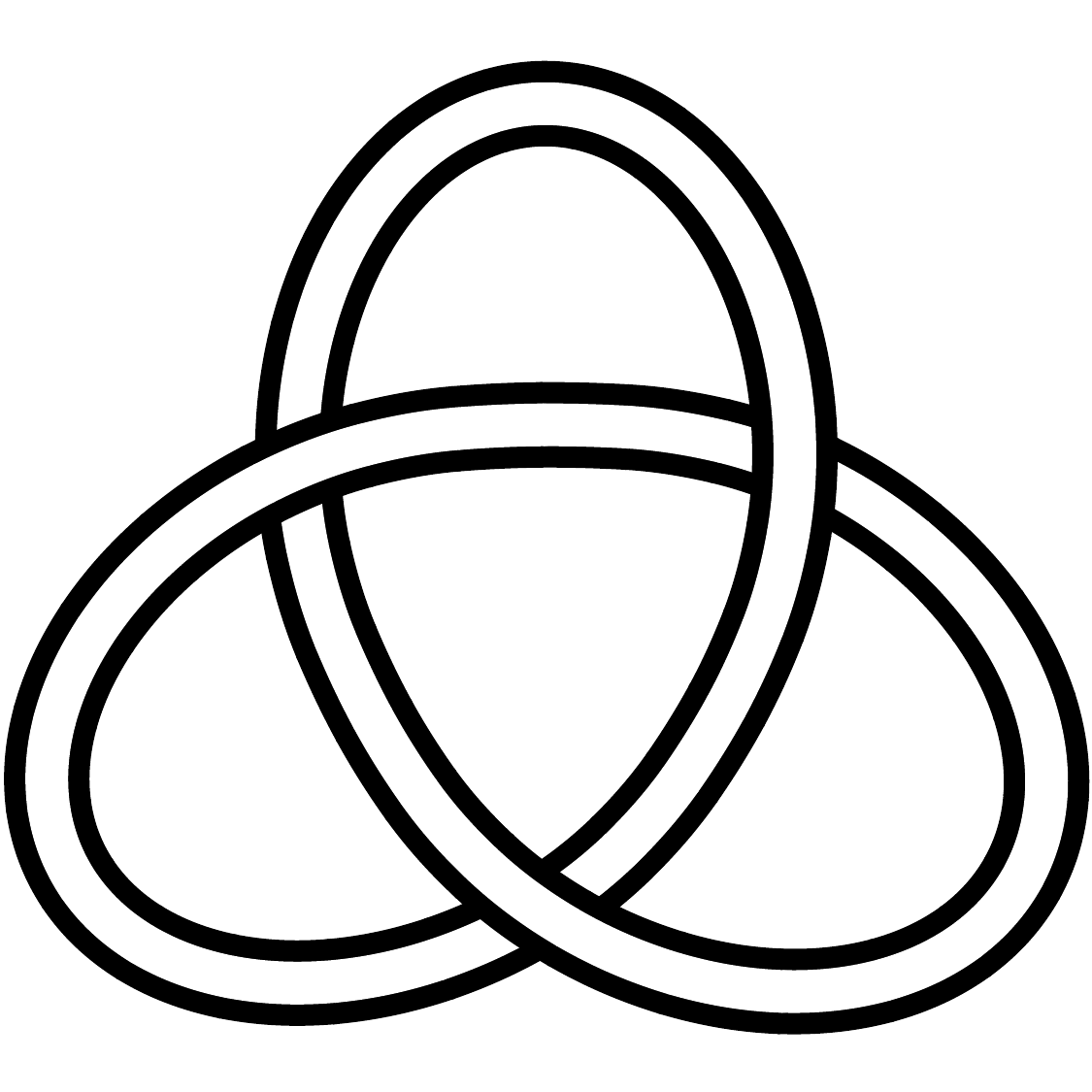} 
& 
\newline
$
\displaystyle{%
q^{2N-2} \Big( \frac{q^N-q^{-N}}{q-q^{-1}} + \frac{q^{N-1}-q^{-N+1}}{q-q^{-1}} q^{-1} (1 + q^{2N} t^{-1}) q^{4} t^{-2} \Big)
}
$
\newline\newline\newline
Checked up to $N=11$. Extends result of~\cite{r0508510} to $N=3$ and $N=4$. 
\\
\hline
%%%%%%%%%%%%%%%%%%%%%%%%%%%%%%%%%%%%%%%%%%%%%%%%%%%%
$4_{1}$ 
\includegraphics[scale=0.07,angle=0]{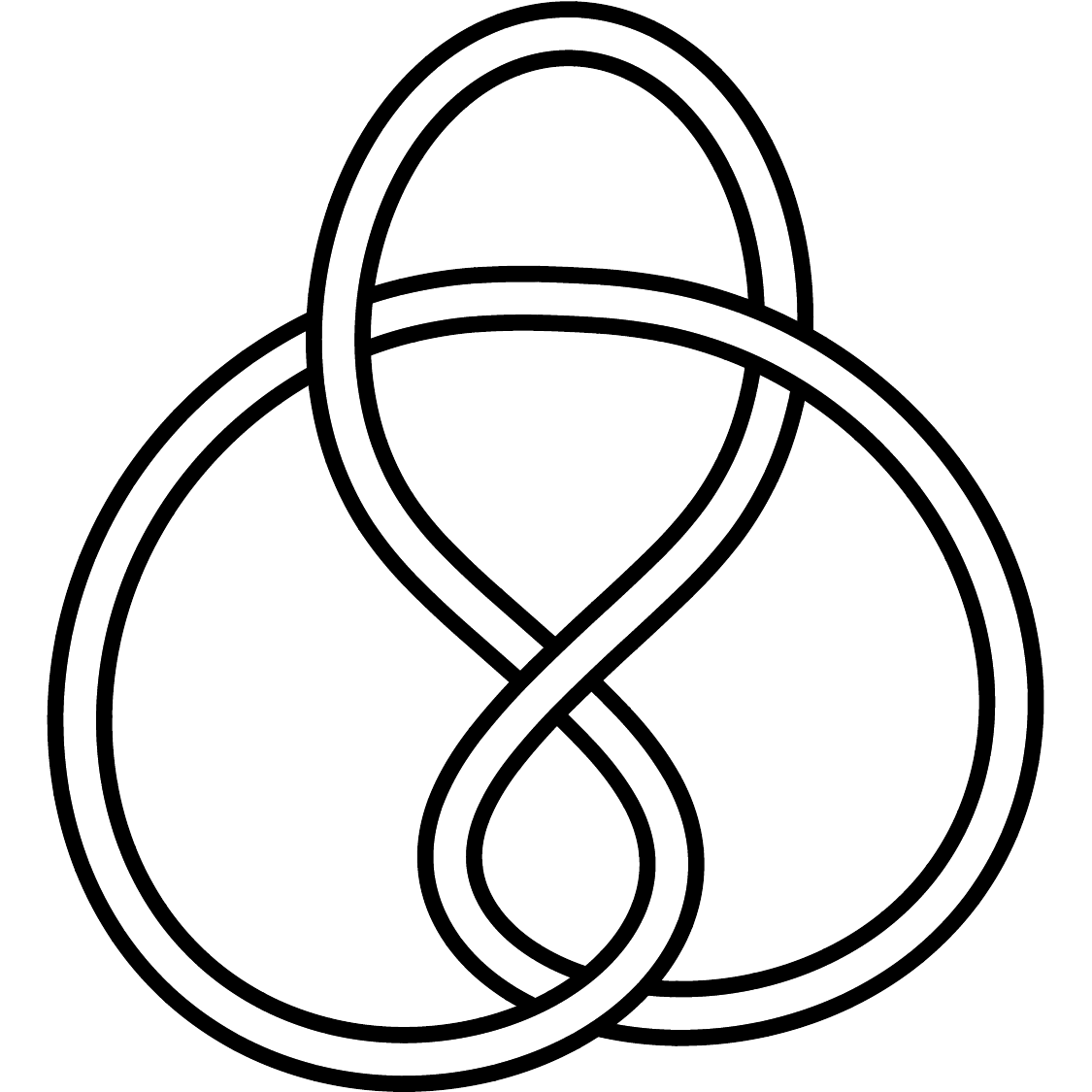} 
&
\newline 
$
\displaystyle{%
\frac{q^N-q^{-N}}{q-q^{-1}} + \frac{q^{N-1}-q^{-N+1}}{q-q^{-1}}  \Big( q^{2N+1} t^{-2} + q t^{-1} + q^{-1} t + q^{-2N-1} t^2 \Big)
}
$
\newline\newline\newline
Checked up to $N=5$. Extends result of~\cite{r0508510} to $N=3$ and $N=4$. 
\\
\hline
%%%%%%%%%%%%%%%%%%%%%%%%%%%%%%%%%%%%%%%%%%%%%%%%%%%%
$4_{1}^2\,\text{{\tiny (v1)}}$ 
\includegraphics[scale=0.07,angle=0]{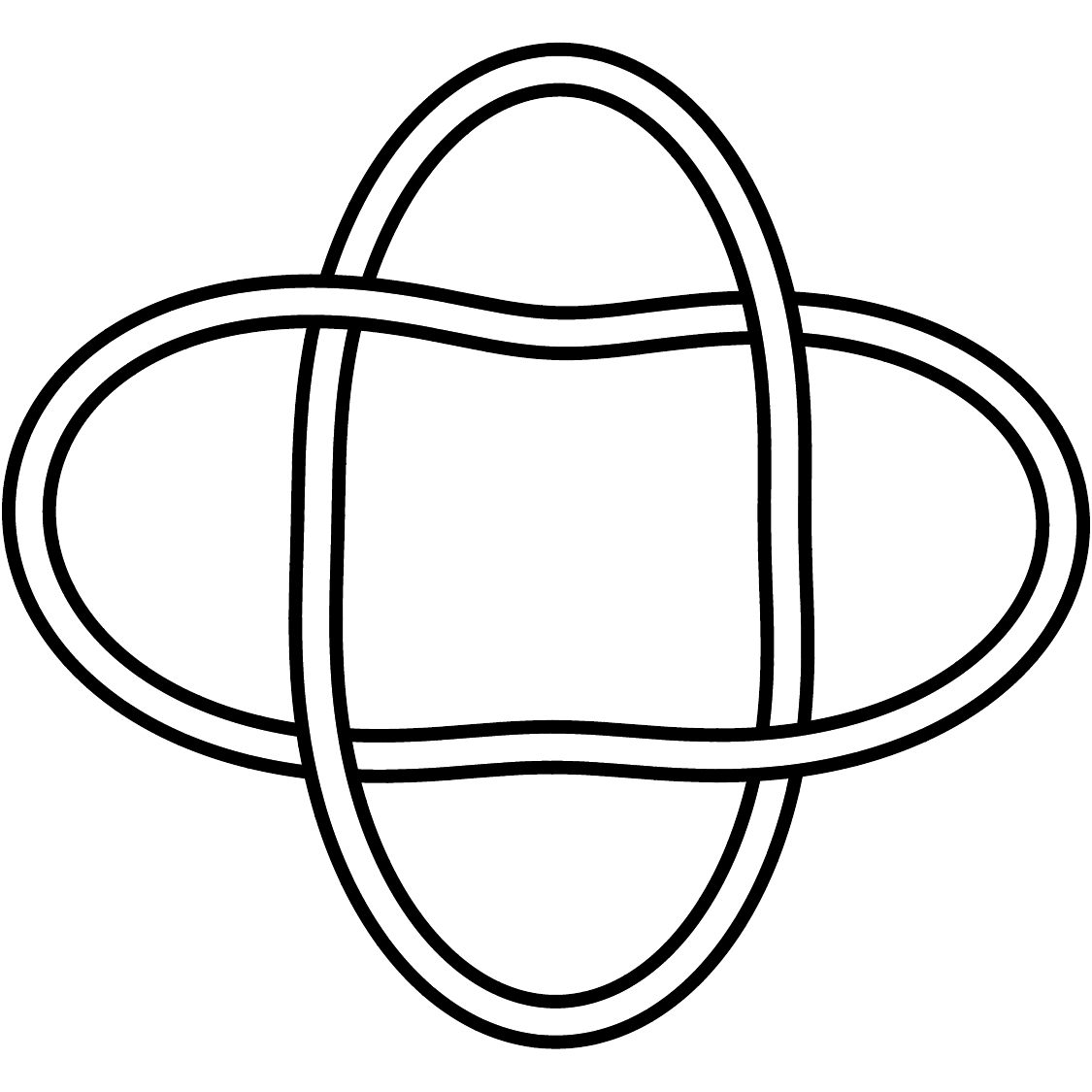} 
& 
\newline
$
\displaystyle{%
\frac{q^{-4 N}}{\left(q^2 - 1\right)^2} \left(q^{2-2 N} \left(q^{2 N}-1\right)^2 t^4+\left(q^2 - 1\right) \left(-q^2 t^2+t^4+q^{4 N} \left(q^2+t\right)-q^{2 N} (1+t) \left(q^2+(t-1) t^2\right)\right)\right) 
}
$
\newline\newline\newline
Checked up to $N=5$.
\\
\hline
%%%%%%%%%%%%%%%%%%%%%%%%%%%%%%%%%%%%%%%%%%%%%%%%%%%%
$4_{1}^2\,\text{{\tiny (v2)}}$ 
\includegraphics[scale=0.07,angle=0]{link4_1_2.pdf} 
&
\newline 
$
\displaystyle{%
\frac{q^{-2-6 N}}{\left(q^2-1\right)^2}} \left(q^2 t^3 \left(q^2-q^4+t\right)+q^{4 N} \left(q^6 \left(q^2-1\right)+q^2 \left(q^2-1\right) t^2+t^4\right) \right.
$
\newline
$
\displaystyle{%
\left. \qquad\qquad\quad -q^{2 N} \left(q^8+t^4-q^4 t^2 (1+t)+q^2 t^3 (1+t)+q^6 \left(t^2-1\right)\right)\right)
}
$
\newline
Checked up to $N=5$.
\\
\hline
%%%%%%%%%%%%%%%%%%%%%%%%%%%%%%%%%%%%%%%%%%%%%%%%%%%%
$5_{1}$ 
\includegraphics[scale=0.07,angle=0]{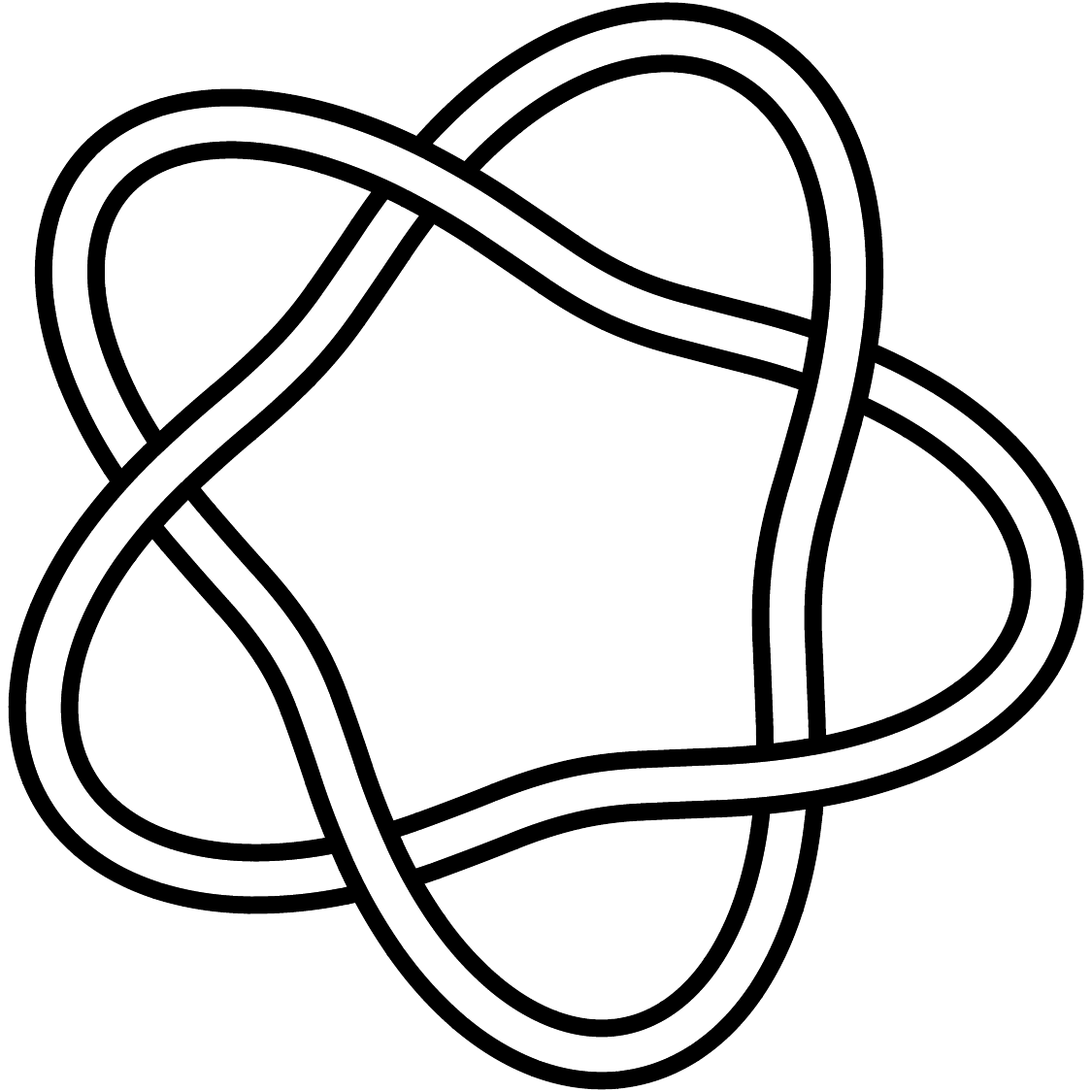} 
& 
\newline
$
\displaystyle{%
q^{4N-4} \Big( \frac{q^N-q^{-N}}{q-q^{-1}} + \frac{q^{N-1}-q^{-N+1}}{q-q^{-1}} q^{-1} (1 + q^{2N} t^{-1}) (q^{4} t^{-2} + q^{8} t^{-4}) \Big)
}
$
\newline\newline\newline
Checked up to $N=6$. Extends result of~\cite{r0508510} to $N=3$ and $N=4$. 
\\
\hline
%%%%%%%%%%%%%%%%%%%%%%%%%%%%%%%%%%%%%%%%%%%%%%%%%%%%
$5_{2}$ 
\includegraphics[scale=0.07,angle=0]{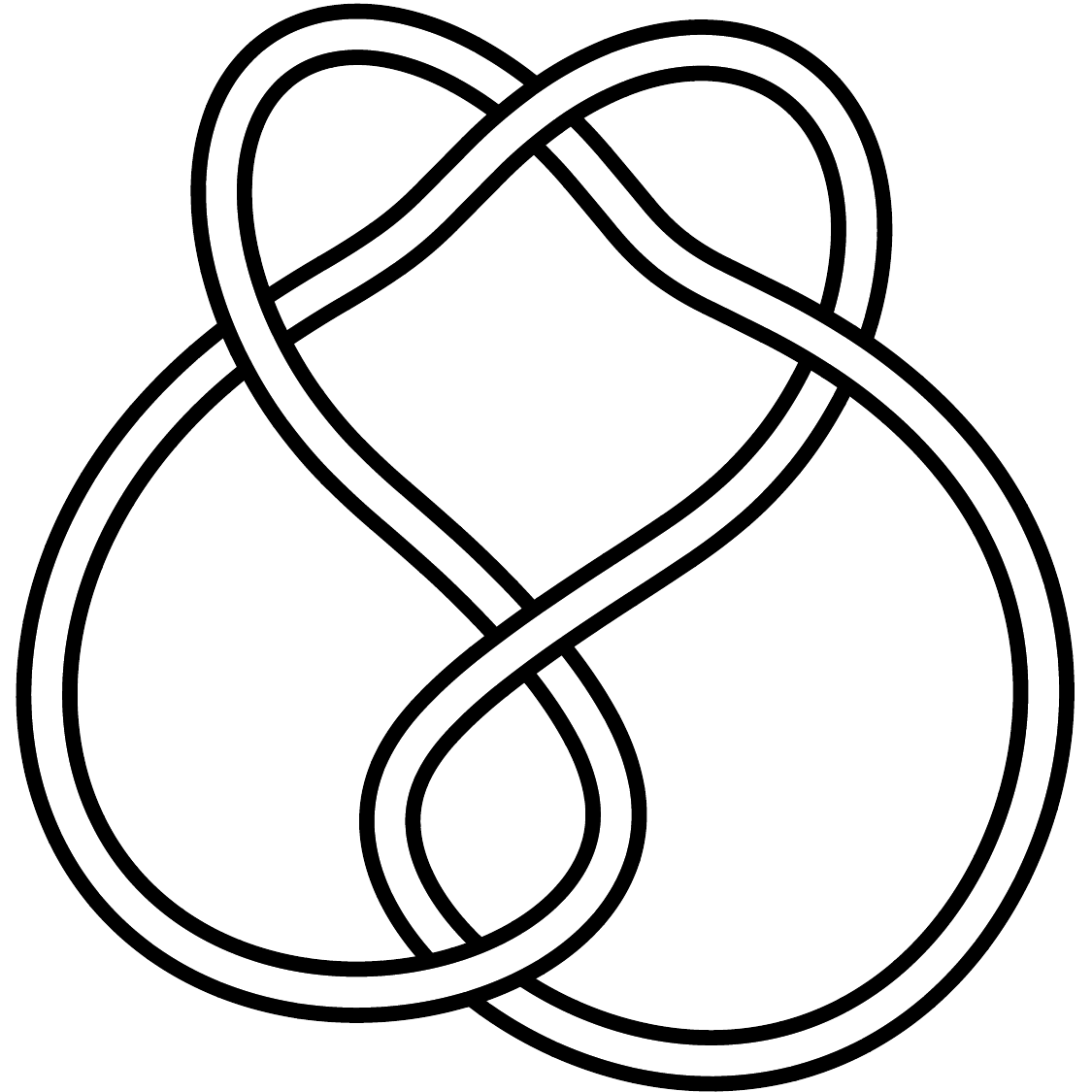} 
& 
$
\newline 
\displaystyle{%
\KR_{2} = \frac{1}{q^3}+\frac{1}{q}+\frac{t}{q^3}+\frac{t^2}{q^7}+\frac{t^2}{q^5}+\frac{t^3}{q^9}+\frac{t^4}{q^9}+\frac{t^5}{q^{13}}
}
$
\newline 
$
\displaystyle{%
\KR_{3} = \frac{1}{q^6}+\frac{1}{q^4}+\frac{1}{q^2}+\frac{t}{q^6}+\frac{t}{q^4}+\frac{t^2}{q^{12}}+\frac{t^2}{q^{10}}+\frac{t^2}{q^8}+\frac{t^2}{q^6}+\frac{t^3}{q^{14}}+\frac{t^3}{q^{12}}+\frac{t^4}{q^{14}}+\frac{t^4}{q^{12}}+\frac{t^5}{q^{20}}+\frac{t^5}{q^{18}}
}
$
\newline 
$
\displaystyle{%
\KR_{4} = \frac{1}{q^9}+\frac{1}{q^7}+\frac{1}{q^5}+\frac{1}{q^3}+\frac{t}{q^9}+\frac{t}{q^7}+\frac{t}{q^5}+\frac{t^2}{q^{17}}+\frac{t^2}{q^{15}}+\frac{t^2}{q^{13}}+\frac{t^2}{q^{11}}+\frac{t^2}{q^9}+\frac{t^2}{q^7}+\frac{t^3}{q^{19}}+\frac{t^3}{q^{17}}+\frac{t^3}{q^{15}}
}
$
\newline
$
\displaystyle{%
\qquad\qquad +\frac{t^4}{q^{19}}+\frac{t^4}{q^{17}}+\frac{t^4}{q^{15}}+\frac{t^5}{q^{27}}+\frac{t^5}{q^{25}}+\frac{t^5}{q^{23}}
}
$
\newline 
\\
\hline
%%%%%%%%%%%%%%%%%%%%%%%%%%%%%%%%%%%%%%%%%%%%%%%%%%%%
$5_{1}^2\,\text{{\tiny (v1\&v2)}}$ 
\includegraphics[scale=0.07,angle=0]{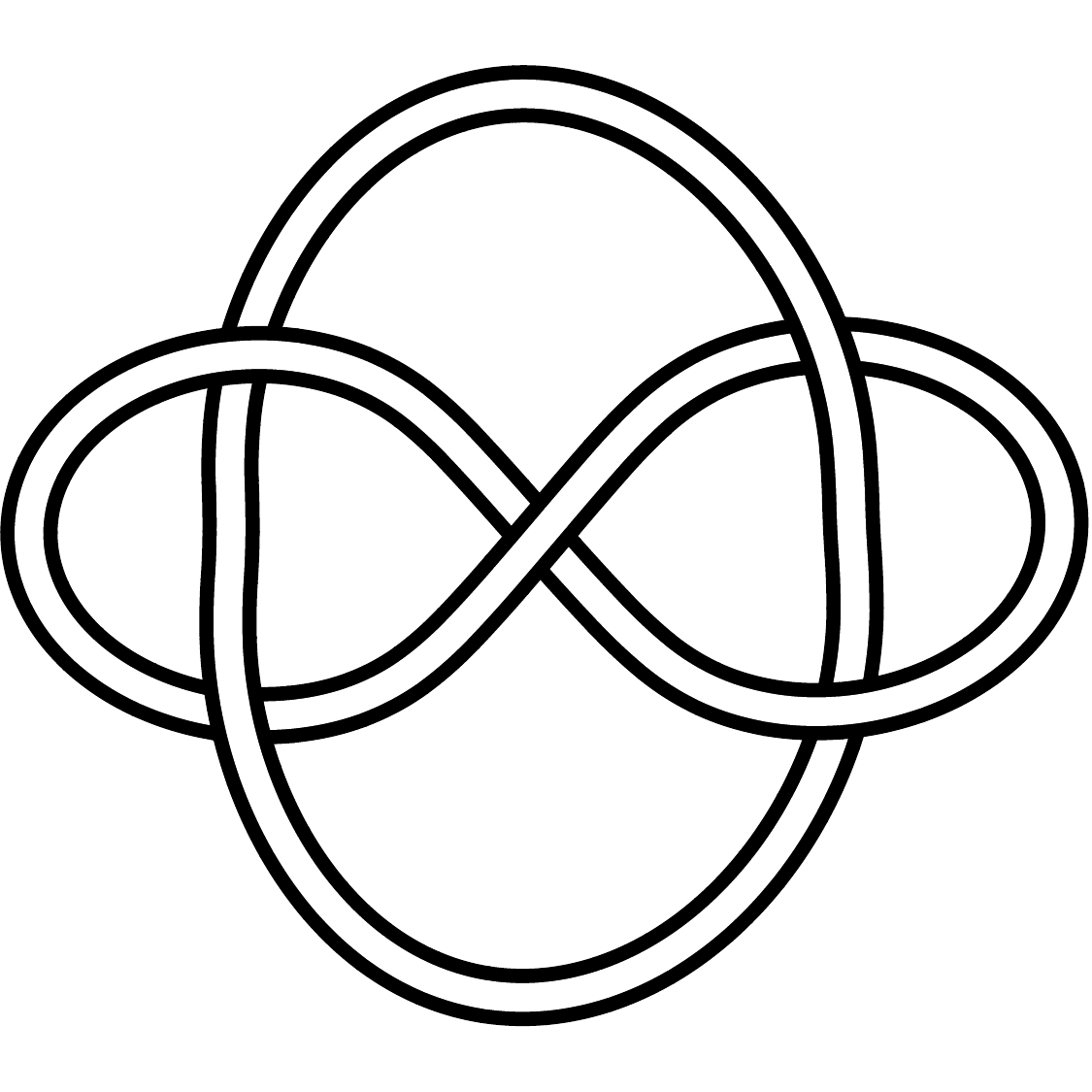} 
&
\newline 
$
\displaystyle{%
\frac{q^{-2-4 N}}{\left(q^2-1\right)^2} \left(q^{4+2 N} \left(q^{2 N}-1\right)^2+t^{-2}\left(\left(q^2-1\right) \left(q^{2 N}-q^2\right) \left(q^{2+4 N} (q-t) (q+t)+t^4 \left(q^2+t\right) \right.\right.\right.
}
$
\newline
$
\displaystyle{%
\left.\left.\left. \qquad\qquad\quad+q^{2 N} t \left(q^2+t\right) \left(q^2+t^2\right)\right)\right)\right)
}
$
\newline
Checked up to $N=4$.
\\
\hline
%%%%%%%%%%%%%%%%%%%%%%%%%%%%%%%%%%%%%%%%%%%%%%%%%%%%
$6_{1}$ 
\includegraphics[scale=0.07,angle=0]{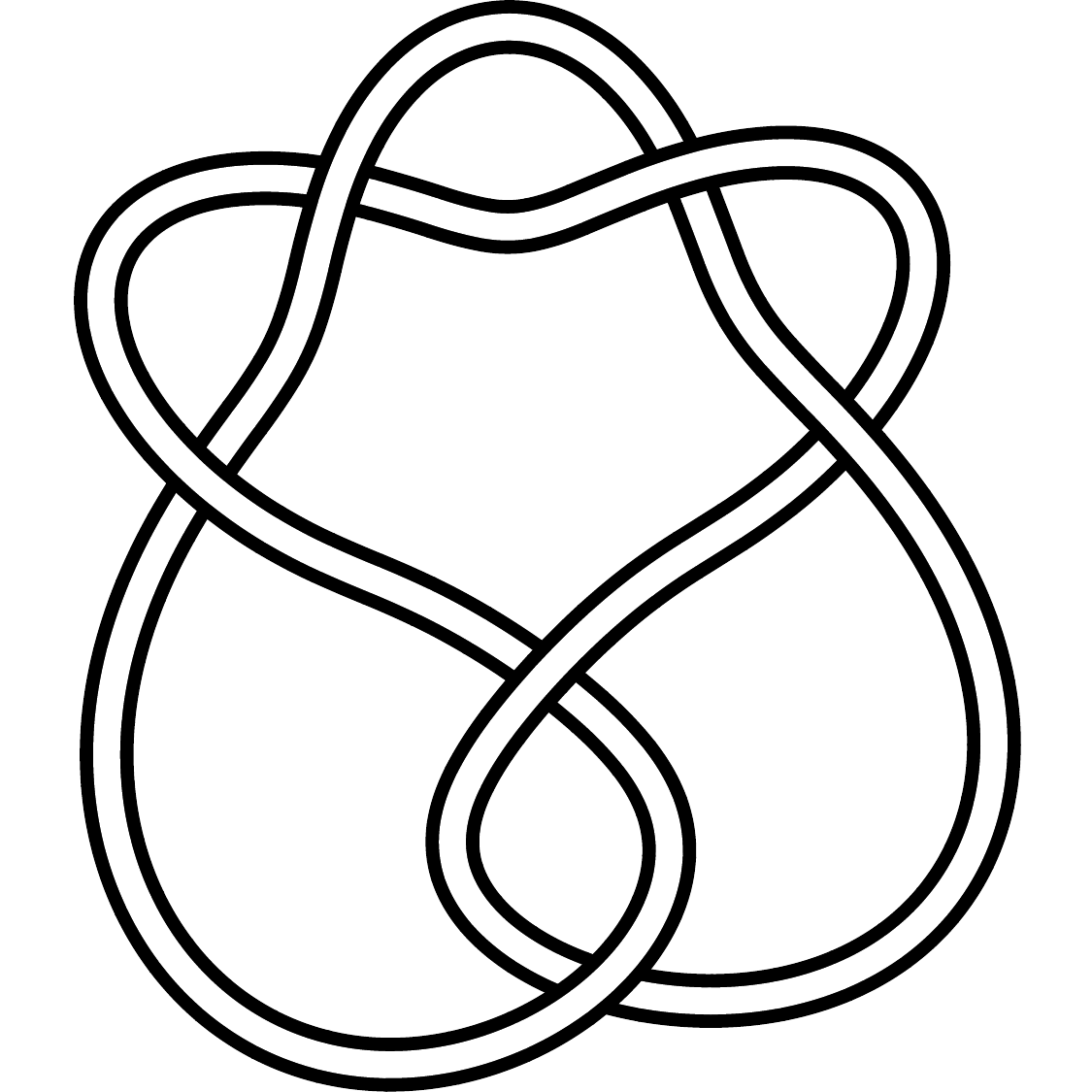} 
&
\newline 
$
\displaystyle{%
\KR_{2} = \frac{1}{q}+2 q+\frac{q^5}{t^2}+\frac{q}{t}+\frac{t}{q^3}+\frac{t}{q}+\frac{t^2}{q^5}+\frac{t^3}{q^5}+\frac{t^4}{q^9}
}
$
\newline 
$
\displaystyle{%
\KR_{3} = 2+\frac{1}{q^2}+2 q^2+\frac{q^6}{t^2}+\frac{q^8}{t^2}+\frac{1}{t}+\frac{q^2}{t}+t+\frac{t}{q^6}+\frac{t}{q^4}+\frac{t}{q^2}+\frac{t^2}{q^8}+\frac{t^2}{q^6}+\frac{t^3}{q^8}+\frac{t^3}{q^6}+\frac{t^4}{q^{14}}+\frac{t^4}{q^{12}}
}
$
\newline
\\
\hline
%%%%%%%%%%%%%%%%%%%%%%%%%%%%%%%%%%%%%%%%%%%%%%%%%%%%
$6_{2}$ 
\includegraphics[scale=0.07,angle=0]{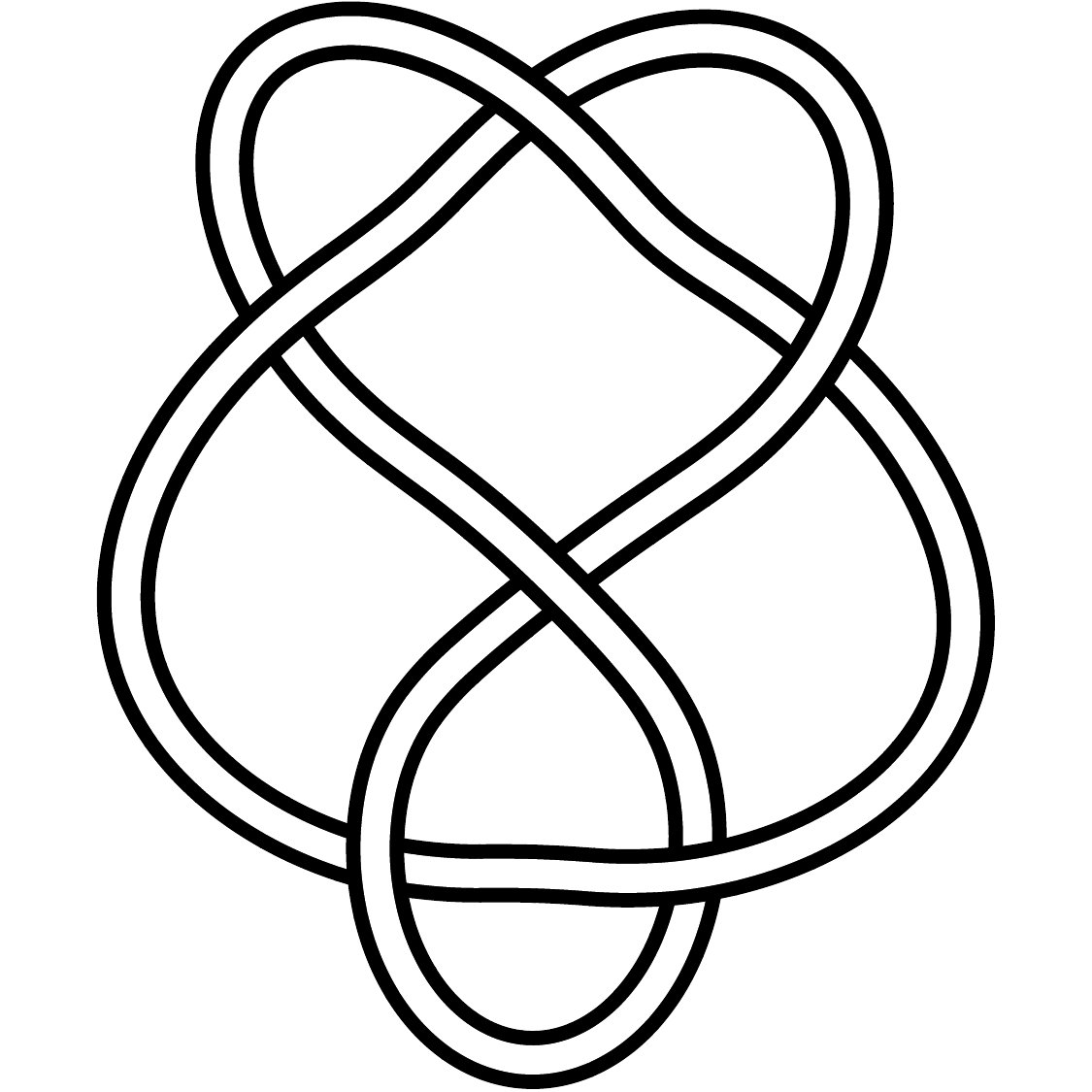} 
& 
\newline 
$
\displaystyle{%
\KR_{2} = \frac{1}{q^3}+\frac{2}{q}+\frac{q^3}{t^2}+\frac{1}{q t}+\frac{t}{q^5}+\frac{t}{q^3}+\frac{t^2}{q^7}+\frac{t^2}{q^5}+\frac{t^3}{q^9}+\frac{t^3}{q^7}+\frac{t^4}{q^{11}}
}
$
\newline 
$
\displaystyle{%
\KR_{3} = 1+\frac{1}{q^6}+\frac{1}{q^4}+\frac{2}{q^2}+\frac{q^2}{t^2}+\frac{q^4}{t^2}+\frac{1}{q^4 t}+\frac{1}{q^2 t}+\frac{t}{q^8}+\frac{2 t}{q^6}+\frac{t}{q^4}+\frac{t^2}{q^{12}}+\frac{t^2}{q^{10}}+\frac{t^2}{q^8}+\frac{t^2}{q^6}+\frac{t^3}{q^{14}}+\frac{t^3}{q^{12}}
}
$
\newline
$
\displaystyle{%
\qquad\qquad +\frac{t^3}{q^{10}}+\frac{t^3}{q^8}+\frac{t^4}{q^{16}}+\frac{t^4}{q^{14}}
}
$
\newline 
$
\displaystyle{%
\KR_{4} = \frac{1}{q^9}+\frac{1}{q^7}+\frac{1}{q^5}+\frac{2}{q^3}+\frac{1}{q}+q+\frac{q}{t^2}+\frac{q^3}{t^2}+\frac{q^5}{t^2}+\frac{1}{q^7 t}+\frac{1}{q^5 t}+\frac{1}{q^3 t}+\frac{t}{q^{11}}+\frac{2 t}{q^9}+\frac{2 t}{q^7}+\frac{t}{q^5}+\frac{t^2}{q^{17}}
}
$
\newline
$
\displaystyle{%
\qquad\qquad +\frac{t^2}{q^{15}}+\frac{t^2}{q^{13}}+\frac{t^2}{q^{11}}+\frac{t^2}{q^9}+\frac{t^2}{q^7}+\frac{t^3}{q^{19}}+\frac{t^3}{q^{17}}+\frac{t^3}{q^{15}}+\frac{t^3}{q^{13}}+\frac{t^3}{q^{11}}+\frac{t^3}{q^9}+\frac{t^4}{q^{21}}+\frac{t^4}{q^{19}}+\frac{t^4}{q^{17}}
}
$
\newline 
\\
\hline
%%%%%%%%%%%%%%%%%%%%%%%%%%%%%%%%%%%%%%%%%%%%%%%%%%%%
$6_{3}$ 
\includegraphics[scale=0.07,angle=0]{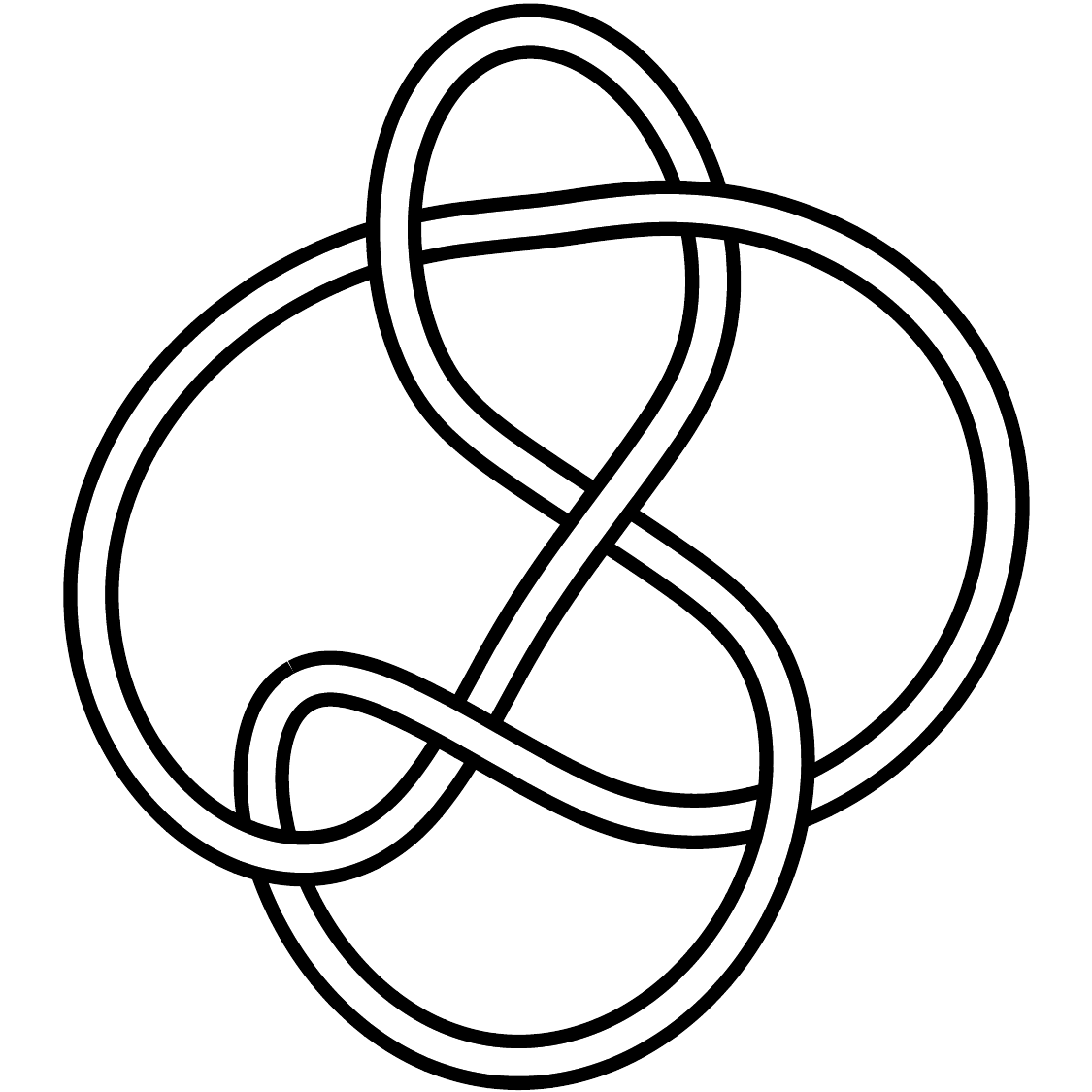} 
& 
\newline 
$
\displaystyle{%
\KR_{2} = \frac{2}{q}+2 q+\frac{q^7}{t^3}+\frac{q^3}{t^2}+\frac{q^5}{t^2}+\frac{q}{t}+\frac{q^3}{t}+\frac{t}{q^3}+\frac{t}{q}+\frac{t^2}{q^5}+\frac{t^2}{q^3}+\frac{t^3}{q^7}
}
$
\newline 
$
\displaystyle{%
\KR_{3} = 3+\frac{2}{q^2}+2 q^2+\frac{q^8}{t^3}+\frac{q^{10}}{t^3}+\frac{q^2}{t^2}+\frac{q^4}{t^2}+\frac{q^6}{t^2}+\frac{q^8}{t^2}+\frac{1}{t}+\frac{q^2}{t}+\frac{q^4}{t}+\frac{q^6}{t}+t+\frac{t}{q^6}+\frac{t}{q^4}+\frac{t}{q^2}+\frac{t^2}{q^8}
}
$
\newline
$
\displaystyle{%
\qquad\qquad +\frac{t^2}{q^6}+\frac{t^2}{q^4}+\frac{t^2}{q^2}+\frac{t^3}{q^{10}}+\frac{t^3}{q^8}
}
$
\newline 
\\
\hline
%%%%%%%%%%%%%%%%%%%%%%%%%%%%%%%%%%%%%%%%%%%%%%%%%%%%
$6_{1}^2\,\text{{\tiny (v1)}}$ 
\includegraphics[scale=0.07,angle=0]{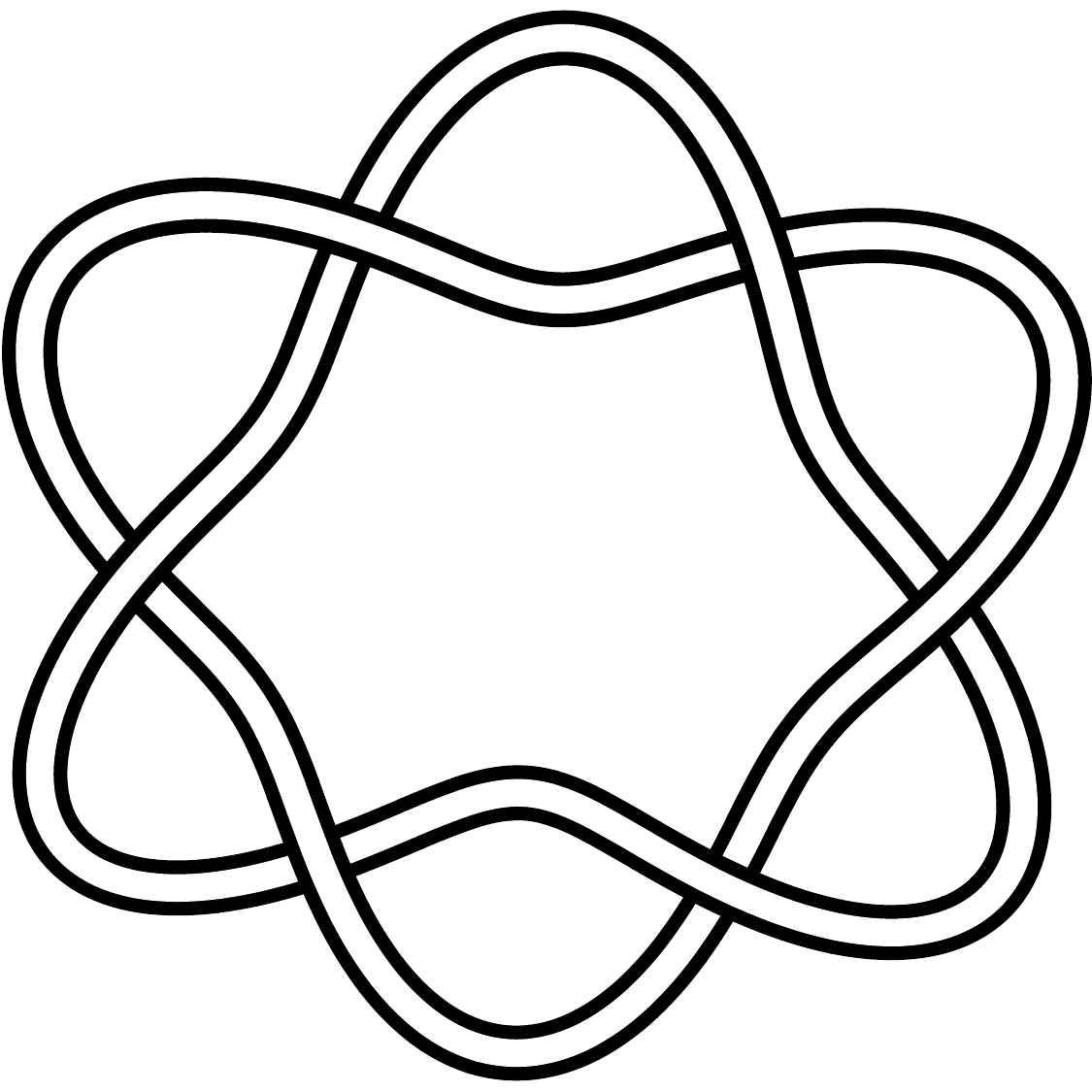} 
& 
\newline 
$
\displaystyle{%
\KR_{2} = \frac{1}{q^6}+\frac{1}{q^4}+\frac{t^2}{q^8}+\frac{t^3}{q^{12}}+\frac{t^4}{q^{12}}+\frac{t^5}{q^{16}}+\frac{t^6}{q^{18}}+\frac{t^6}{q^{16}}
}
$
\newline 
$
\displaystyle{%
\KR_{3} = \frac{1}{q^{12}}+\frac{1}{q^{10}}+\frac{1}{q^8}+\frac{t^2}{q^{14}}+\frac{t^2}{q^{12}}+\frac{t^3}{q^{20}}+\frac{t^3}{q^{18}}+\frac{t^4}{q^{18}}+\frac{t^4}{q^{16}}+\frac{t^5}{q^{24}}+\frac{t^5}{q^{22}}+\frac{t^6}{q^{26}}+\frac{2 t^6}{q^{24}}+\frac{2 t^6}{q^{22}}+\frac{t^6}{q^{20}}
}
$
\newline 
$
\displaystyle{%
\KR_{4} = \frac{1}{q^{18}}+\frac{1}{q^{16}}+\frac{1}{q^{14}}+\frac{1}{q^{12}}+\frac{t^2}{q^{20}}+\frac{t^2}{q^{18}}+\frac{t^2}{q^{16}}+\frac{t^3}{q^{28}}+\frac{t^3}{q^{26}}+\frac{t^3}{q^{24}}+\frac{t^4}{q^{24}}+\frac{t^4}{q^{22}}+\frac{t^4}{q^{20}}+\frac{t^5}{q^{32}}+\frac{t^5}{q^{30}}
}
$
\newline
$
\displaystyle{%
\qquad\qquad +\frac{t^5}{q^{28}}+\frac{t^6}{q^{34}}+\frac{2 t^6}{q^{32}}+\frac{3 t^6}{q^{30}}+\frac{3 t^6}{q^{28}}+\frac{2 t^6}{q^{26}}+\frac{t^6}{q^{24}}
}
$
\newline 
\\
\hline
%%%%%%%%%%%%%%%%%%%%%%%%%%%%%%%%%%%%%%%%%%%%%%%%%%%%
$6_{1}^2\,\text{{\tiny (v2)}}$ 
\includegraphics[scale=0.07,angle=0]{link6_1_2.pdf} 
& 
\newline
$
\displaystyle{%
\KR_{2} = 1+\frac{1}{q^2}+\frac{t^2}{q^6}+\frac{t^3}{q^6}+\frac{t^4}{q^{10}}+\frac{t^6}{q^{14}}+\frac{t^6}{q^{12}}+\frac{t q^{12}}{q^{14}}
}
$
\newline 
$
\displaystyle{%
\KR_{3} = 1+\frac{1}{q^4}+\frac{1}{q^2}+\frac{t}{q^4}+\frac{t}{q^2}+\frac{t^2}{q^{10}}+\frac{t^2}{q^8}+\frac{t^3}{q^{10}}+\frac{t^3}{q^8}+\frac{t^4}{q^{16}}+\frac{t^4}{q^{14}}+\frac{t^6}{q^{22}}+\frac{2 t^6}{q^{20}}+\frac{2 t^6}{q^{18}}+\frac{t^6}{q^{16}}
}
$
\newline
\\
\hline
%%%%%%%%%%%%%%%%%%%%%%%%%%%%%%%%%%%%%%%%%%%%%%%%%%%%
$6_{1}^3\,\text{{\tiny (v1)}}$ 
\includegraphics[scale=0.07,angle=0]{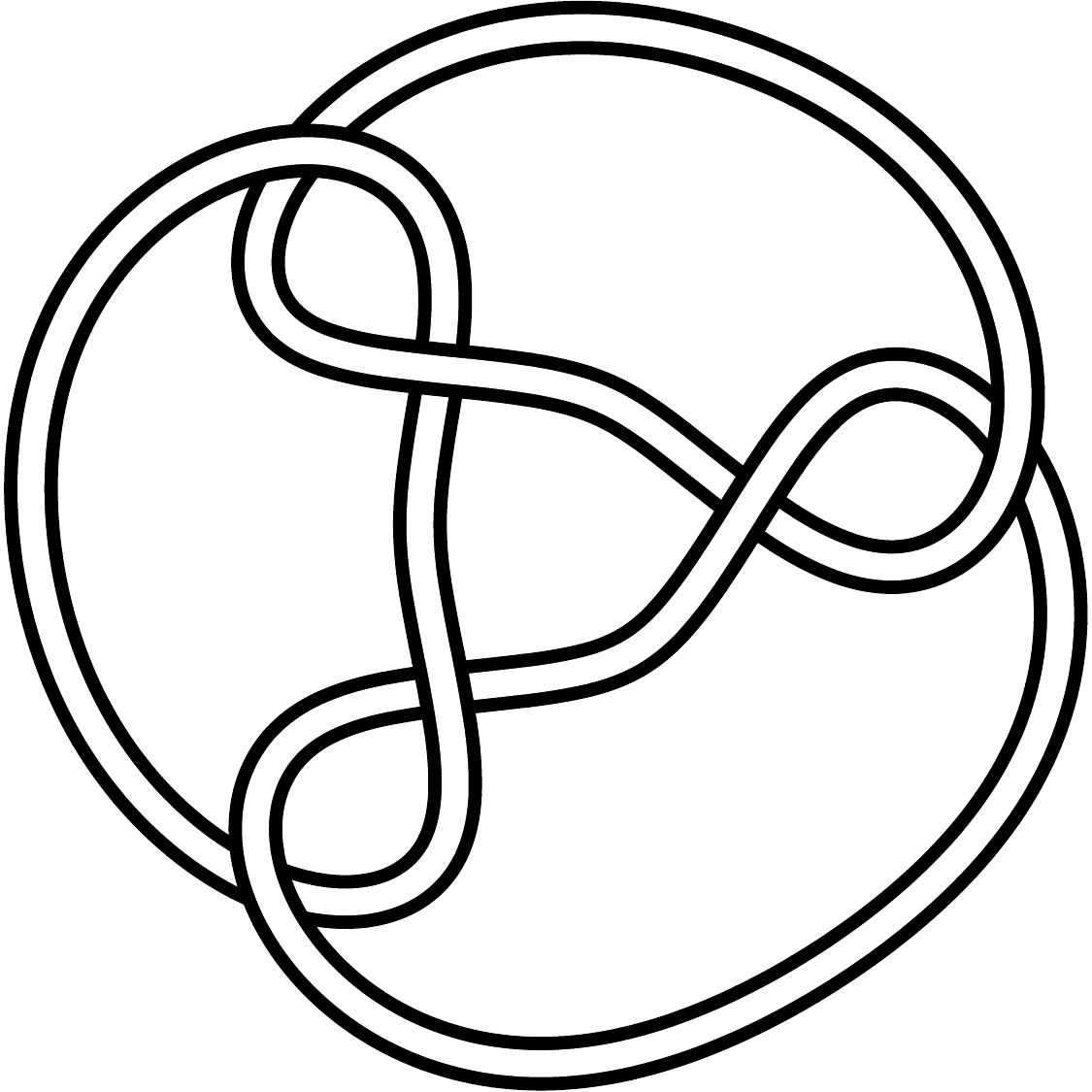} 
& 
\newline 
$
\displaystyle{%
\KR_{2} = \frac{1}{q^3}+\frac{1}{q}+\frac{2 t}{q^3}+\frac{2 t^2}{q^7}+\frac{t^2}{q^5}+\frac{t^3}{q^9}+\frac{3 t^4}{q^{11}}+\frac{3 t^4}{q^9}+\frac{t^5}{q^{11}}+\frac{t^6}{q^{15}}
}
$
\newline 
$
\displaystyle{%
\KR_{3} = \frac{1}{q^6}+\frac{1}{q^4}+\frac{1}{q^2}+\frac{2 t}{q^6}+\frac{2 t}{q^4}+\frac{2 t^2}{q^{12}}+\frac{2 t^2}{q^{10}}+\frac{t^2}{q^8}+\frac{t^2}{q^6}+\frac{t^3}{q^{14}}+\frac{t^3}{q^{12}}+\frac{3 t^4}{q^{18}}+\frac{6 t^4}{q^{16}}+\frac{6 t^4}{q^{14}}+\frac{3 t^4}{q^{12}}+\frac{t^5}{q^{16}}
}
$
\newline
$
\displaystyle{%
\qquad\qquad +\frac{t^5}{q^{14}}+\frac{t^6}{q^{24}}+\frac{3 t^6}{q^{22}}+\frac{3 t^6}{q^{20}}+\frac{t^6}{q^{18}}
}
$
\newline 
\\
\hline
%%%%%%%%%%%%%%%%%%%%%%%%%%%%%%%%%%%%%%%%%%%%%%%%%%%%
$6_{1}^3\,\text{{\tiny (v2)}}$ 
\includegraphics[scale=0.07,angle=0]{link6_1_3.pdf} 
& 
\newline 
$
\displaystyle{%
% fixed copy and paste typo
%\frac{1}{q^3}+\frac{1}{q}+\frac{2 t}{q^3}+\frac{2 t^2}{q^7}+\frac{t^2}{q^5}+\frac{t^3}{q^9}+\frac{3 t^4}{q^{11}}+\frac{3 t^4}{q^9}+\frac{t^5}{q^{11}}+\frac{t^6}{q^{15}}
\KR_{2} = 
3 q + 3 q^3 + \frac{q^9}{t^4} + \frac{q^11}{t^4} + \frac{2 q^9}{t^3} + \frac{2 q^5}{t^2} + \frac{q^7}{t^2} + \frac{q^3}{t} + q t + \frac{t^2}{q^3}
}
$
\newline 
$
\displaystyle{%
\KR_{3} = 2+5 q^2+5 q^4+3 q^6+\frac{q^{10}}{t^4}+\frac{2 q^{12}}{t^4}+\frac{2 q^{14}}{t^4}+\frac{q^{16}}{t^4}+\frac{2 q^{12}}{t^3}+\frac{2 q^{14}}{t^3}+\frac{q^4}{t^2}+\frac{4 q^6}{t^2}+\frac{4 q^8}{t^2}+\frac{2 q^{10}}{t^2}+\frac{q^{12}}{t^2}
}
$
\newline
$
\displaystyle{%
\qquad\qquad +\frac{q^4}{t}+\frac{q^6}{t}+q^2 t+q^4 t+\frac{t^2}{q^4}+\frac{t^2}{q^2}
}
$
\newline 
\\
\hline
%%%%%%%%%%%%%%%%%%%%%%%%%%%%%%%%%%%%%%%%%%%%%%%%%%%%%
%$6_{2}^2\,\text{{\tiny (v1)}}$ 
%\includegraphics[scale=0.07,angle=0]{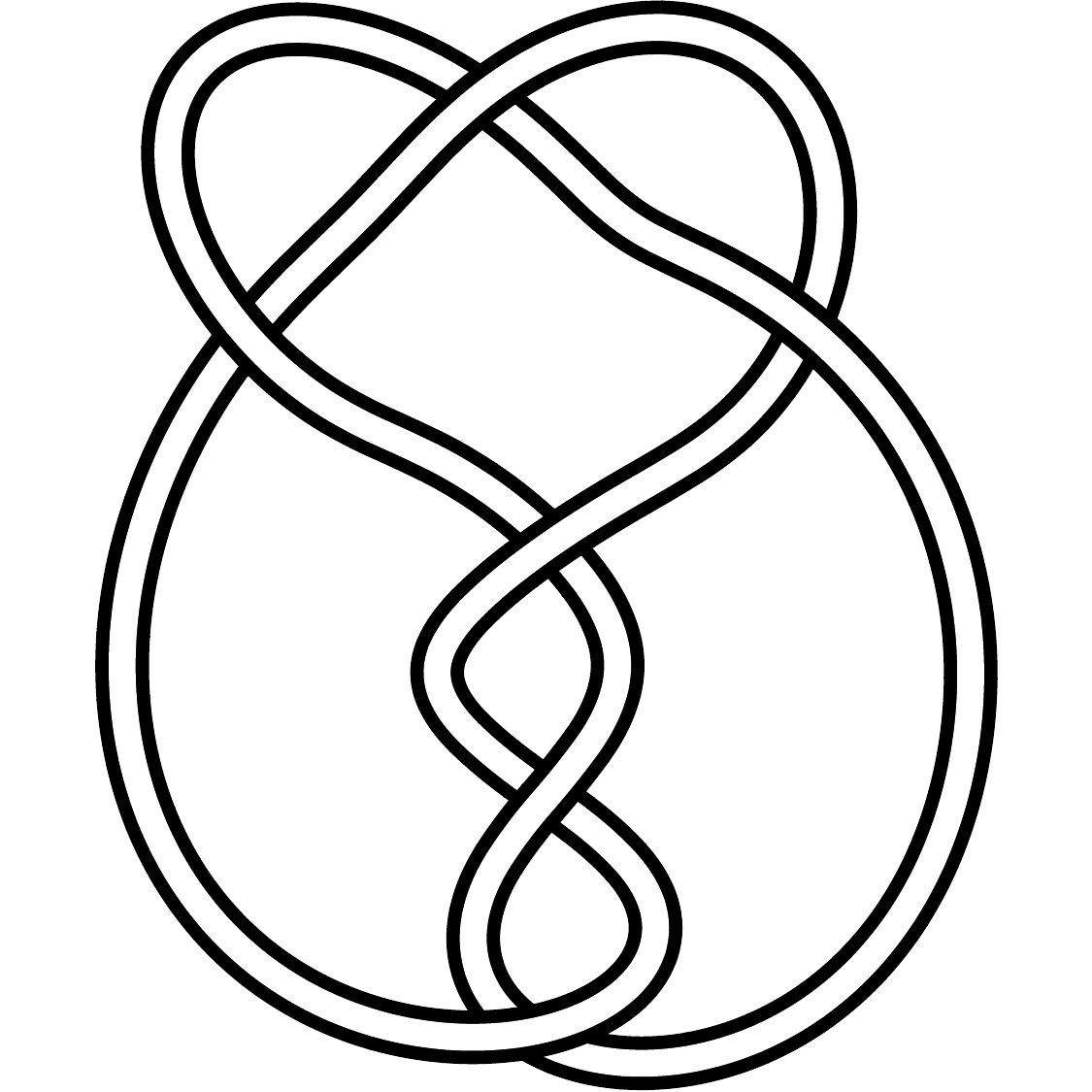} 
%& 
%\newline 
%$
%\displaystyle{%
%\KR_{2} = \frac{1}{q^4}+\frac{1}{q^2}+\frac{t}{q^4}+\frac{t^2}{q^8}+\frac{t^2}{q^6}+\frac{t^3}{q^{10}}+\frac{t^3}{q^8}+\frac{t^4}{q^{12}}+\frac{t^4}{q^{10}}+\frac{t^5}{q^{14}}+\frac{t^6}{q^{16}}+\frac{t^6}{q^{14}}
%}
%$
%\newline 
%$
%\displaystyle{%
%\KR_{3} = \frac{1}{q^8}+\frac{1}{q^6}+\frac{1}{q^4}+\frac{t}{q^8}+\frac{t}{q^6}+\frac{t^2}{q^{14}}+\frac{t^2}{q^{12}}+\frac{t^2}{q^{10}}+\frac{t^2}{q^8}+\frac{t^3}{q^{16}}+\frac{t^3}{q^{14}}+\frac{t^3}{q^{12}}+\frac{t^3}{q^{10}}+\frac{t^4}{q^{18}}+\frac{2 t^4}{q^{16}}+\frac{t^4}{q^{14}}
%}
%$
%\newline
%$
%\displaystyle{%
%\qquad\qquad +\frac{t^5}{q^{22}}+\frac{t^5}{q^{20}}+\frac{t^6}{q^{24}}+\frac{2 t^6}{q^{22}}+\frac{2 t^6}{q^{20}}+\frac{t^6}{q^{18}}
%}
%$
%\newline 
%\\
%\hline
%%%%%%%%%%%%%%%%%%%%%%%%%%%%%%%%%%%%%%%%%%%%%%%%%%%%
$6_{2}^2$ 
\includegraphics[scale=0.07,angle=0]{link6_2_2.pdf} 
& 
\newline 
$
\displaystyle{%
\KR_{2} = q^2+q^4+\frac{q^{14}}{t^6}+\frac{q^{16}}{t^6}+\frac{q^{14}}{t^5}+\frac{q^{10}}{t^4}+\frac{q^{12}}{t^4}+\frac{q^8}{t^3}+\frac{q^{10}}{t^3}+\frac{q^6}{t^2}+\frac{q^8}{t^2}+\frac{q^4}{t}
}
$
\newline 
$
\displaystyle{%
\KR_{3} = q^4+q^6+q^8+\frac{q^{18}}{t^6}+\frac{2 q^{20}}{t^6}+\frac{2 q^{22}}{t^6}+\frac{q^{24}}{t^6}+\frac{q^{20}}{t^5}+\frac{q^{22}}{t^5}+\frac{q^{14}}{t^4}+\frac{2 q^{16}}{t^4}+\frac{q^{18}}{t^4}+\frac{q^{10}}{t^3}+\frac{q^{12}}{t^3}+\frac{q^{14}}{t^3}
}
$
\newline
$
\displaystyle{%
\qquad\qquad +\frac{q^{16}}{t^3}+\frac{q^8}{t^2}+\frac{q^{10}}{t^2}+\frac{q^{12}}{t^2}+\frac{q^{14}}{t^2}+\frac{q^6}{t}+\frac{q^8}{t}
}
$
\newline 
\\
\hline
%%%%%%%%%%%%%%%%%%%%%%%%%%%%%%%%%%%%%%%%%%%%%%%%%%%%
$6_{2}^3\,\text{{\tiny (v1)}}$ 
\includegraphics[scale=0.07,angle=0]{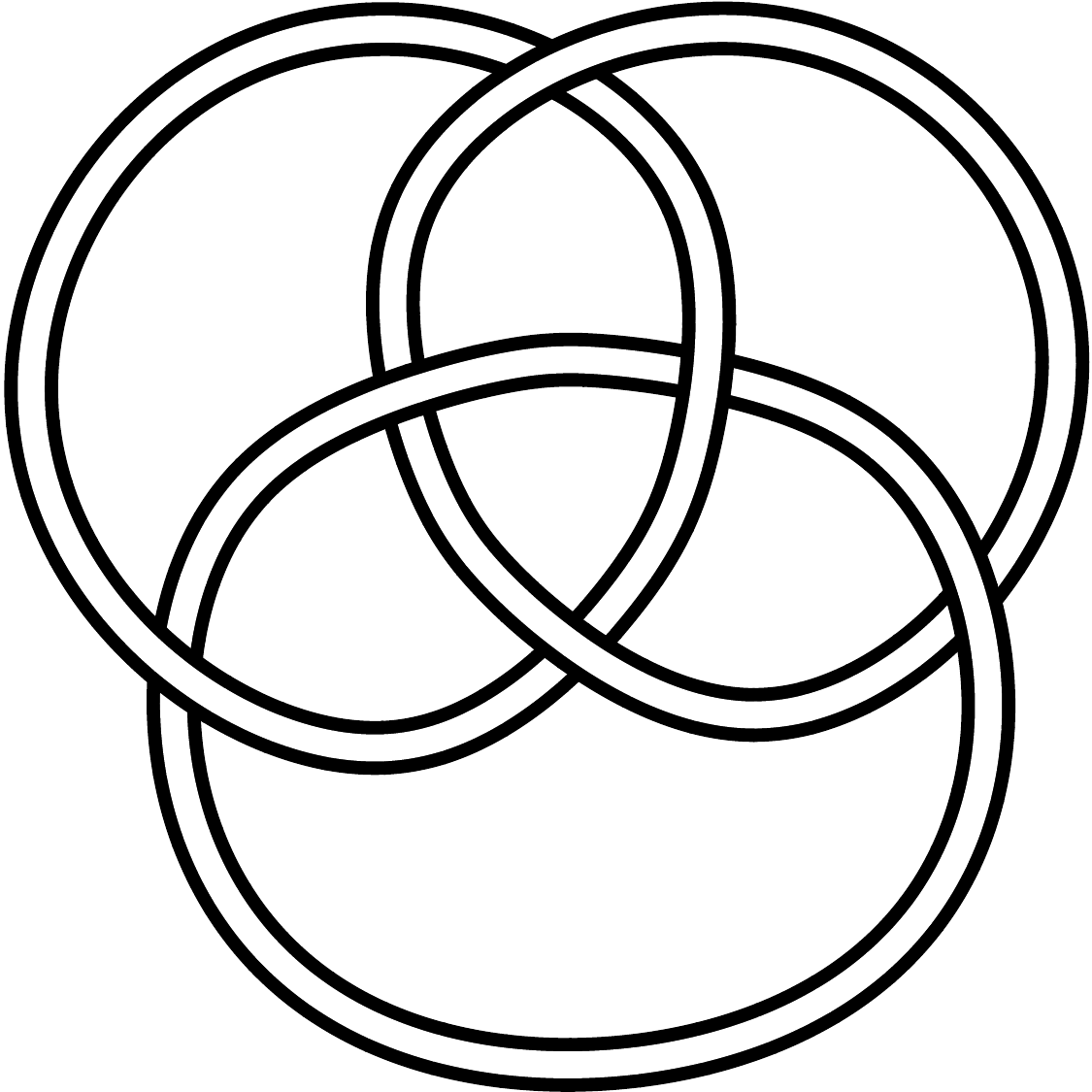} 
& 
\newline
$
\displaystyle{%
\KR_{2} = 
\frac{2}{q}+2 q+\frac{q^3}{t^2}+\frac{q^5}{t^2}+\frac{t^2}{q^5}+\frac{t^2}{q^3}
}
$
\newline 
$
\displaystyle{%
\KR_{3} = 
5+\frac{1}{q^4}+\frac{4}{q^2}+4 q^2+q^4+\frac{q^2}{t^2}+\frac{2 q^4}{t^2}+\frac{2 q^6}{t^2}+\frac{q^8}{t^2}+\frac{t^2}{q^8}+\frac{2 t^2}{q^6}+\frac{2 t^2}{q^4}+\frac{t^2}{q^2}
}
$
\newline
\\
\hline
%%%%%%%%%%%%%%%%%%%%%%%%%%%%%%%%%%%%%%%%%%%%%%%%%%%%
$6_{2}^3\,\text{{\tiny (v2)}}$ 
\includegraphics[scale=0.07,angle=0]{link6_2_3.pdf} 
& 
\newline 
$
\displaystyle{%
\KR_{2} = \frac{4}{q}+4 q+\frac{q^7}{t^3}+\frac{q^3}{t^2}+\frac{2 q^5}{t^2}+\frac{2 q}{t}+\frac{2 t}{q}+\frac{2 t^2}{q^5}+\frac{t^2}{q^3}+\frac{t^3}{q^7}
}
$
\newline 
$
\displaystyle{%
\KR_{3} = 9+\frac{2}{q^4}+\frac{7}{q^2}+7 q^2+2 q^4+\frac{q^8}{t^3}+\frac{q^{10}}{t^3}+\frac{q^2}{t^2}+\frac{q^4}{t^2}+\frac{2 q^6}{t^2}+\frac{2 q^8}{t^2}+\frac{2}{t}+\frac{2 q^2}{t}+2 t+\frac{2 t}{q^2}+\frac{2 t^2}{q^8}+\frac{2 t^2}{q^6}
}
$
\newline
$
\displaystyle{%
\qquad\qquad +\frac{t^2}{q^4}+\frac{t^2}{q^2}+\frac{t^3}{q^{10}}+\frac{t^3}{q^8}
}
$
\newline 
\\
\hline
%%%%%%%%%%%%%%%%%%%%%%%%%%%%%%%%%%%%%%%%%%%%%%%%%%%%
$6_{3}^2\,\text{{\tiny (v1)}}$ 
\includegraphics[scale=0.07,angle=0]{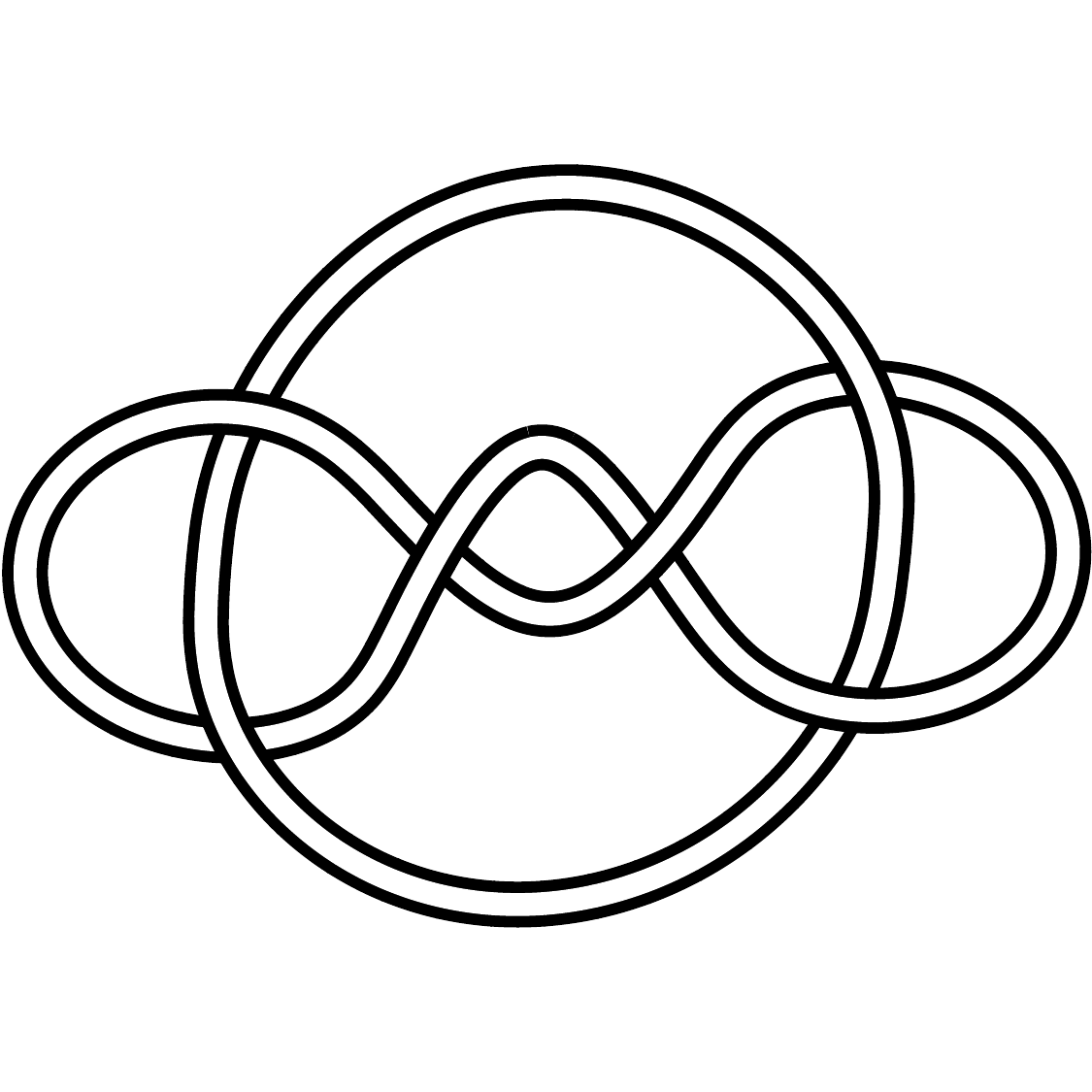} 
& 
\newline
$
\displaystyle{%
\KR_{2} = 
\frac{t^4}{q^{10}}+\frac{t^4}{q^8}+\frac{t^3}{q^8}+\frac{2 t^2}{q^6}+\frac{q^4}{t^2}+\frac{t^2}{q^4}+\frac{q^2}{t}+\frac{2 t}{q^2}+\frac{2}{q^2}+\frac{1}{t}+1
}
$
\newline 
$
\displaystyle{%
\KR_{3} = 
\frac{t^4}{q^{16}}+\frac{2 t^4}{q^{14}}+\frac{2 t^4}{q^{12}}+\frac{t^3}{q^{12}}+\frac{t^4}{q^{10}}+\frac{t^3}{q^{10}}+\frac{2 t^2}{q^{10}}+\frac{2 t^2}{q^8}+\frac{q^6}{t^2}+\frac{t^2}{q^6}+\frac{q^4}{t^2}+\frac{t^2}{q^4}+\frac{q^4}{t}+\frac{2 t}{q^4}+\frac{2}{q^4}+\frac{q^2}{t}
}
$
\newline
$
\displaystyle{%
\qquad\qquad +\frac{2 t}{q^2}+\frac{1}{q^2 t}+\frac{2}{q^2}+\frac{1}{t}+1
}
%\frac{q^{-2-6 N}}{\left(q^2-1\right)^2} \left(q^2 t^3 \left(q^2-q^4+t\right)+q^{4 N} \left(q^6 \left(q^2-1\right)+q^2 \left(q^2-1\right) t^2+t^4\right) \right.
$
\newline
\\
\hline
%%%%%%%%%%%%%%%%%%%%%%%%%%%%%%%%%%%%%%%%%%%%%%%%%%%%
$6_{3}^2\,\text{{\tiny (v2)}}$ 
\includegraphics[scale=0.07,angle=0]{link6_3_2.pdf} 
& 
\newline 
$
\displaystyle{%
\KR_{2} = q^2+q^4+\frac{q^{16}}{t^6}+\frac{q^{12}}{t^5}+\frac{q^{14}}{t^5}+\frac{2 q^{10}}{t^4}+\frac{q^{12}}{t^4}+\frac{2 q^{10}}{t^3}+\frac{2 q^6}{t^2}+\frac{q^8}{t^2}+\frac{q^4}{t}
}
$
\newline 
$
\displaystyle{%
\KR_{3} = q^4+q^6+q^8+\frac{q^{22}}{t^6}+\frac{q^{24}}{t^6}+\frac{q^{16}}{t^5}+\frac{q^{18}}{t^5}+\frac{q^{20}}{t^5}+\frac{q^{22}}{t^5}+\frac{q^{12}}{t^4}+\frac{3 q^{14}}{t^4}+\frac{3 q^{16}}{t^4}+\frac{q^{18}}{t^4}+\frac{2 q^{14}}{t^3}+\frac{2 q^{16}}{t^3}
}
$
\newline
$
\displaystyle{%
\qquad\qquad +\frac{2 q^8}{t^2}+\frac{2 q^{10}}{t^2}+\frac{q^{12}}{t^2}+\frac{q^{14}}{t^2}+\frac{q^6}{t}+\frac{q^8}{t}
}
$
\newline 
\\
\hline
%%%%%%%%%%%%%%%%%%%%%%%%%%%%%%%%%%%%%%%%%%%%%%%%%%%%
$6_{3}^3\,\text{{\tiny (v1)}}$
\includegraphics[scale=0.07,angle=0]{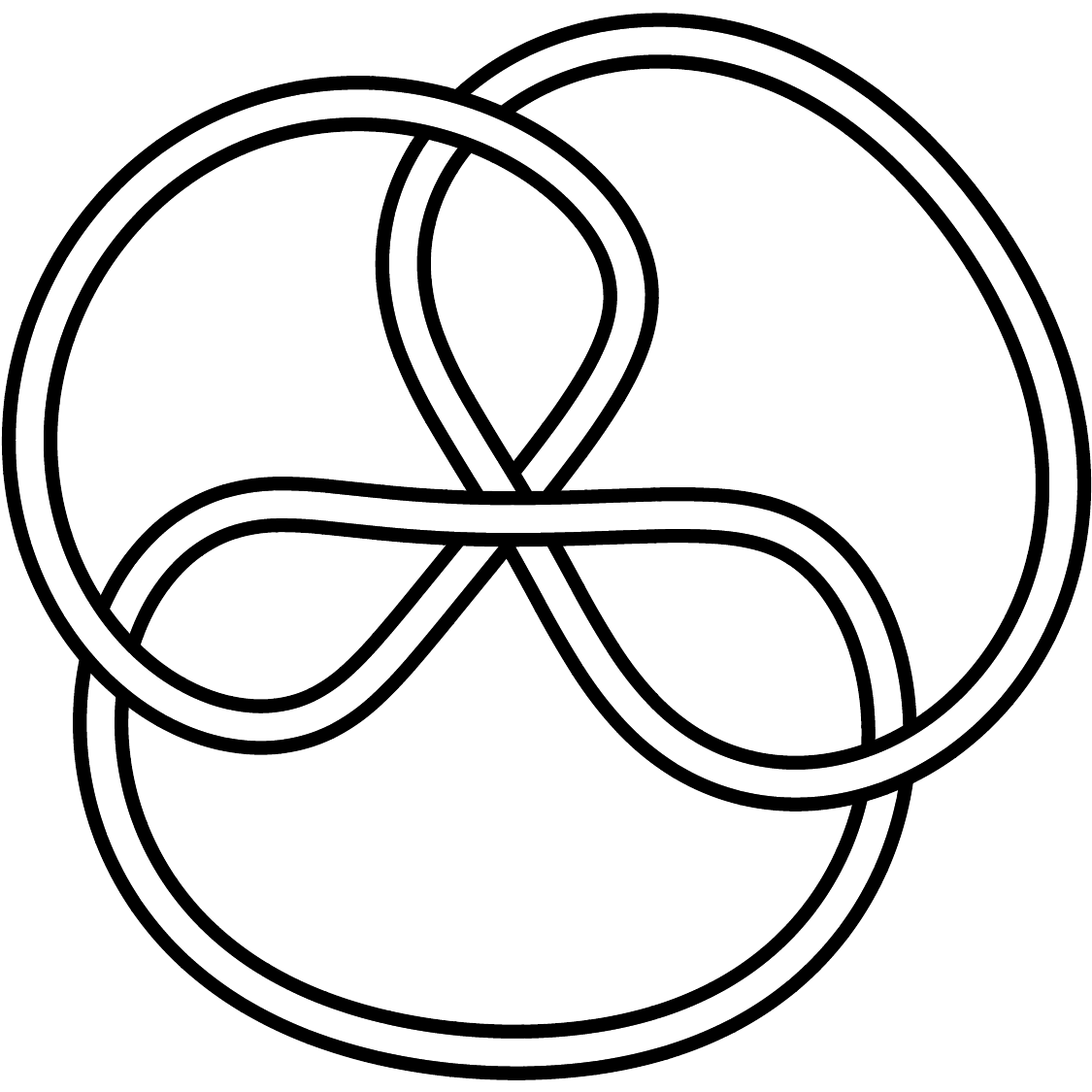} 
& 
\newline 
$
\displaystyle{%
\KR_{2} = \frac{1}{q^5}+\frac{1}{q^3}+\frac{t^2}{q^7}+\frac{t^3}{q^{11}}+\frac{2 t^4}{q^{13}}+\frac{3 t^4}{q^{11}}+\frac{t^4}{q^9}
}
$
\newline 
$
\displaystyle{%
\KR_{3} = \frac{1}{q^{10}}+\frac{1}{q^8}+\frac{1}{q^6}+\frac{t^2}{q^{12}}+\frac{t^2}{q^{10}}+\frac{t^3}{q^{18}}+\frac{t^3}{q^{16}}+\frac{2 t^4}{q^{20}}+\frac{5 t^4}{q^{18}}+\frac{6 t^4}{q^{16}}+\frac{4 t^4}{q^{14}}+\frac{t^4}{q^{12}}+\frac{t^6}{q^{24}}+\frac{2 t^6}{q^{22}}+\frac{2 t^6}{q^{20}}
}
$
\newline
$
\displaystyle{%
\qquad\qquad +\frac{t^6}{q^{18}}
}
$
\newline 
\\
\hline
%%%%%%%%%%%%%%%%%%%%%%%%%%%%%%%%%%%%%%%%%%%%%%%%%%%%
$6_{3}^3\,\text{{\tiny (v2)}}$
\includegraphics[scale=0.07,angle=0]{link6_3_3.pdf} 
&
\newline 
$
\displaystyle{%
\KR_{2} = \frac{2}{q}+3 q+q^3+\frac{q^7}{t^4}+\frac{q^9}{t^4}+\frac{q^5}{t^2}+\frac{q}{t}
}
$
\newline 
$
\displaystyle{%
\KR_{3} = 4+\frac{2}{q^2}+5 q^2+3 q^4+q^6+\frac{q^8}{t^4}+\frac{2 q^{10}}{t^4}+\frac{2 q^{12}}{t^4}+\frac{q^{14}}{t^4}+\frac{q^4}{t^2}+\frac{3 q^6}{t^2}+\frac{3 q^8}{t^2}+\frac{q^{10}}{t^2}+\frac{1}{t}+\frac{q^2}{t}
}
$
\newline
\\
\hline
%%%%%%%%%%%%%%%%%%%%%%%%%%%%%%%%%%%%%%%%%%%%%%%%%%%%
\end{longtable}
}

{\footnotesize 
\begin{longtable}{p{0.07\textwidth}|p{0.87\textwidth}} 
\caption{Reduced Khovanov-Rozansky $\sln$ invariants for prime links up to 6 crossings} \\
\label{reducedKR}
link $L$ & $\overline{\KR}_{N}(L)$ \\
\hline\hline
\endfirsthead
link $L$ & $\overline{\KR}_{N}(L)$ \\
\hline\hline
\endhead
\hline\hline
\endfoot
%%%%%%%%%%%%%%%%%%%%%%%%%%%%%%%%%%%%%%%%%%%%%%%%%%%%
$2_{1}^2\,\text{{\tiny (v1\&v2)}}$ 
\includegraphics[scale=0.07,angle=0]{link2_1_2.pdf} 
& 
\newline
$
\displaystyle{%
q^{1-N} + \frac{q^{-N-1}-q^{1-3 N}}{q^2-1} t^2
}
$
\newline\newline\newline
Checked up to $N=18$. Extends result of~\cite{r0508510} to $N=3$ and $N=4$. 
\\
\hline
%%%%%%%%%%%%%%%%%%%%%%%%%%%%%%%%%%%%%%%%%%%%%%%%%%%%
$3_{1}$ 
\includegraphics[scale=0.07,angle=0]{knot3_1.pdf} 
&
\newline 
$
\displaystyle{%
q^{4N} t^{-3} + q^{2 + 2N} t^{-2} + q^{-2 + 2N}
}
$
\newline\newline\newline
Checked up to $N=16$. Agrees with results of~\cite{r0508510, r0607544}. 
\\
\hline
%%%%%%%%%%%%%%%%%%%%%%%%%%%%%%%%%%%%%%%%%%%%%%%%%%%%
$4_{1}$ 
\includegraphics[scale=0.07,angle=0]{knot4_1.pdf} 
& 
\newline
$
\displaystyle{%
1 + q^{2 N} t^{-2} + q^2 t^{-1} + t q^{-2} + q^{-2 N} t^2
}
$
\newline\newline\newline
Checked up to $N=7$. Agrees with results of~\cite{r0508510, r0607544}. 
\\
\hline
%%%%%%%%%%%%%%%%%%%%%%%%%%%%%%%%%%%%%%%%%%%%%%%%%%%%
$4_{1}^2\,\text{{\tiny (v1)}}$ 
\includegraphics[scale=0.07,angle=0]{link4_1_2.pdf} 
& 
\newline
$
\displaystyle{%
q^{1-N}+q^{-N-1} t+q^{1-3 N} t^2-\frac{q^{1-5N} t^4}{q^2-1}+\frac{q^{-3 N-1} t^4}{q^2-1}
}
$
\newline\newline\newline
Checked up to $N=6$. Extends result of~\cite{r0508510} to $N=3$ and $N=4$. 
\\
\hline
%%%%%%%%%%%%%%%%%%%%%%%%%%%%%%%%%%%%%%%%%%%%%%%%%%%%
$4_{1}^2\,\text{{\tiny (v2)}}$ 
\includegraphics[scale=0.07,angle=0]{link4_1_2.pdf} 
& 
\newline
$
\displaystyle{%
q^{3 N-3}-\frac{q^{3N+5}}{\left(q^2-1\right) t^4}+\frac{q^{5 N+3}}{\left(q^2-1\right) t^4}+\frac{q^{5 N-1}}{t^3}+\frac{q^{3 N+1}}{t^2}
}
$
\newline\newline\newline
Checked up to $N=8$. Extends result of~\cite{r0508510} to $N=3$ and $N=4$. 
\\
\hline
%%%%%%%%%%%%%%%%%%%%%%%%%%%%%%%%%%%%%%%%%%%%%%%%%%%%
$5_{1}$ 
\includegraphics[scale=0.07,angle=0]{knot5_1.pdf} 
& 
\newline
$
\displaystyle{%
q^{4 - 4 N} + q^{-4 N} t^2 + q^{2 - 6 N} t^3 + q^{-4 - 4 N} t^4 +  q^{-2 - 6 N} t^5
}
$
\newline\newline\newline
Checked up to $N=7$. Agrees with results of~\cite{r0508510, r0607544}. 
\\
\hline
%%%%%%%%%%%%%%%%%%%%%%%%%%%%%%%%%%%%%%%%%%%%%%%%%%%%
$5_{2}$ 
\includegraphics[scale=0.07,angle=0]{knot5_2.pdf} 
& 
\newline
$
\displaystyle{%
q^{2 - 2 N} + q^{-2 N} t + q^{2 - 4 N} t^2 + q^{-2 - 2 N} t^2 + q^{-4 N} t^3 + q^{-2 - 4 N} t^4 + q^{-6 N} t^5
}
$
\newline\newline\newline
Checked up to $N=4$. Agrees with results of~\cite{r0508510, r0607544}. 
\\
\hline
%%%%%%%%%%%%%%%%%%%%%%%%%%%%%%%%%%%%%%%%%%%%%%%%%%%%
$5_{1}^2\,\text{{\tiny (v1\&v2)}}$ 
\includegraphics[scale=0.07,angle=0]{link5_1_2.pdf} 
& 
\newline
$
\displaystyle{%
q^{1-N}+\frac{q^{N-1} - q^{1-N}}{q^2-1}+\frac{q^{N+1}}{t^2}+\frac{q^{3-N}}{t}+q^{-N-1} t+q^{1-3N} t^2+q^{-N-3} t^2+q^{-3 N-1} t^3
}
$
\newline\newline\newline
Checked up to $N=4$. Extends result of~\cite{r0508510} to $N=3$ and $N=4$. 
\\
\hline
%%%%%%%%%%%%%%%%%%%%%%%%%%%%%%%%%%%%%%%%%%%%%%%%%%%%
$6_{1}$ 
\includegraphics[scale=0.07,angle=0]{knot6_1.pdf} 
& 
\newline
$
\displaystyle{%
2+q^{2 N}t^{-2}+q^2 t^{-1}+ t q^{-2} + q^{2-2N} t+q^{-2 N} t^2+q^{-2N-2} t^3+q^{-4 N} t^4
}
$
\newline\newline\newline
Checked up to $N=2$. Agrees with results of~\cite{r0508510, r0607544}. 
\\
\hline
%%%%%%%%%%%%%%%%%%%%%%%%%%%%%%%%%%%%%%%%%%%%%%%%%%%%
$6_{2}$ 
\includegraphics[scale=0.07,angle=0]{knot6_2.pdf} 
& 
\newline
$
\displaystyle{%
% fixed typo, q^{-2N-2} t^2 instead of q^{-N-2} t^2: 
%q^{-2} + q^{2-2 N} + q^2t^{-2} + q^{4-2 N} t^{-1} + 2 q^{-2 N} t+q^{2-4 N} t^2+q^{-N-2} t^2+q^{-4 N} t^3+q^{-2N-4} t^3+q^{-4N-2} t^4
q^{-2} + q^{2-2 N} + q^2t^{-2} + q^{4-2 N} t^{-1} + 2 q^{-2 N} t+q^{2-4 N} t^2+q^{-2N-2} t^2+q^{-4 N} t^3+q^{-2N-4} t^3+q^{-4N-2} t^4
}
$
\newline\newline\newline
Checked up to $N=4$. Agrees with results of~\cite{r0508510, r0607544}. 
\\
\hline
%%%%%%%%%%%%%%%%%%%%%%%%%%%%%%%%%%%%%%%%%%%%%%%%%%%%
$6_{3}$ 
\includegraphics[scale=0.07,angle=0]{knot6_3.pdf} 
& 
\newline
$
\displaystyle{%
3+q^{2N+2} t^{-3} + q^4t^{-2}+ q^{2 N}t^{-2}+ q^2 t^{-1}+ q^{2 N-2}t^{-1}+ t q^{-2}+q^{2-2 N} t+ t^2q^{-4}+q^{-2 N} t^2+q^{-2N-2} t^3
}
$
\newline\newline\newline
Checked up to $N=3$. Agrees with results of~\cite{r0508510, r0607544}. 
\\
\hline
%%%%%%%%%%%%%%%%%%%%%%%%%%%%%%%%%%%%%%%%%%%%%%%%%%%%
$6_{1}^2\,\text{{\tiny (v1)}}$ 
\includegraphics[scale=0.07,angle=0]{link6_1_2.pdf} 
& 
\newline
$
\displaystyle{%
q^{5 N-5} \left(1+\frac{q^{2N+10}-q^{12}}{\left(q^2-1\right) t^6}+\frac{q^{2 N+6}}{t^5}+\frac{q^8}{t^4}+\frac{q^{2 N+2}}{t^3}+\frac{q^4}{t^2}\right)
}
$
\newline\newline\newline
Checked up to $N=5$. Extends result of~\cite{r0508510} to $N=3$ and $N=4$. 
\\
\hline
%%%%%%%%%%%%%%%%%%%%%%%%%%%%%%%%%%%%%%%%%%%%%%%%%%%%
$6_{1}^2\,\text{{\tiny (v2)}}$ 
\includegraphics[scale=0.07,angle=0]{link6_1_2.pdf} 
& 
\newline
$
\displaystyle{%
q^{-7 N-1} \left(q^{6 N+2}+q^{6 N} t+q^{4 N+2} t^2+q^{4 N} t^3+q^{2 N+2} t^4+\frac{q^{2 N} -q^2 }{q^2-1} t^6\right)
}
$
\newline\newline\newline
Checked up to $N=3$. Extends result of~\cite{r0508510} to $N=3$.
\\
\hline
%%%%%%%%%%%%%%%%%%%%%%%%%%%%%%%%%%%%%%%%%%%%%%%%%%%%
$6_{1}^3\,\text{{\tiny (v1)}}$ 
\includegraphics[scale=0.07,angle=0]{link6_1_3.pdf} 
& 
\newline
$
\displaystyle{%
\overline{\KR}_{2} = \frac{1}{q^2}+\frac{2 t}{q^4}+\frac{3 t^2}{q^6}+\frac{t^3}{q^8}+\frac{3 t^4}{q^{10}}+\frac{t^5}{q^{12}}+\frac{t^6}{q^{14}}
}
$
\newline 
$
\displaystyle{%
\overline{\KR}_{3} = \frac{1}{q^4}+\frac{2 t}{q^6}+\frac{2 t^2}{q^{10}}+\frac{t^2}{q^8}+\frac{t^3}{q^{12}}+\frac{3 t^4}{q^{16}}+\frac{3 t^4}{q^{14}}+\frac{t^5}{q^{16}}+\frac{t^6}{q^{22}}+\frac{2 t^6}{q^{20}}
}
$
\newline
\\
\hline
%%%%%%%%%%%%%%%%%%%%%%%%%%%%%%%%%%%%%%%%%%%%%%%%%%%%
$6_{1}^3\,\text{{\tiny (v2)}}$ 
\includegraphics[scale=0.07,angle=0]{link6_1_3.pdf} 
& 
\newline
$
\displaystyle{%
\overline{\KR}_{2} = 3 q^2+\frac{q^{10}}{t^4}+\frac{2 q^8}{t^3}+\frac{3 q^6}{t^2}+\frac{q^4}{t}+t+\frac{t^2}{q^2}
}
$
\newline 
$
\displaystyle{%
\overline{\KR}_{3} = 2 q^2+3 q^4+\frac{q^{12}}{t^4}+\frac{q^{14}}{t^4}+\frac{2 q^{12}}{t^3}+\frac{q^6}{t^2}+\frac{3 q^8}{t^2}+\frac{q^{10}}{t^2}+\frac{q^6}{t}+q^2 t+\frac{t^2}{q^2}
}
$
\newline
\\
\hline
%%%%%%%%%%%%%%%%%%%%%%%%%%%%%%%%%%%%%%%%%%%%%%%%%%%%%
%$6_{2}^2\,\text{{\tiny (v1)}}$ 
%\includegraphics[scale=0.07,angle=0]{link6_2_2.pdf} 
%& 
%\newline
%$
%\displaystyle{%
%q^{-7 N-3} \left(q^{4 N+6}+q^{4 N+4} t+q^{2 N+6} t^2+q^{4 N+2} t^2+q^{4 N} t^3+q^{2 N+4} t^3+2 q^{2 N+2} t^4+q^4 t^5+\frac{q^{2 N} -q^2}{q^2-1} t^6\right)
%}
%$
%\newline\newline\newline
%Checked up to $N=4$. Extends result of~\cite{r0508510} to $N=3$ and $N=4$.
%\\
%\hline
%%%%%%%%%%%%%%%%%%%%%%%%%%%%%%%%%%%%%%%%%%%%%%%%%%%%
$6_{2}^2$ 
\includegraphics[scale=0.07,angle=0]{link6_2_2.pdf} 
& 
\newline
$
\displaystyle{%
q^{3 N-3}+\frac{q^{7 N+3}-q^{5 N+5}}{\left(q^2-1\right) t^6}+\frac{q^{7 N-1}}{t^5}+\frac{2 q^{5 N+1}}{t^4}+\frac{q^{3 N+3}}{t^3}+\frac{q^{5 N-1}}{t^3}+\frac{q^{3 N+1}}{t^2}+\frac{q^{5 N-3}}{t^2}+\frac{q^{3 N-1}}{t}
}
$
\newline\newline\newline
Checked up to $N=4$. Extends result of~\cite{r0508510} to $N=3$ and $N=4$.
\\
\hline
%%%%%%%%%%%%%%%%%%%%%%%%%%%%%%%%%%%%%%%%%%%%%%%%%%%%
$6_{2}^3\,\text{{\tiny (v1)}}$ 
\includegraphics[scale=0.07,angle=0]{link6_2_3.pdf} 
& 
\newline
$
\displaystyle{%
\overline{\KR}_{2} = 2+\frac{q^4}{t^2}+\frac{t^2}{q^4}
}
$
\newline 
$
\displaystyle{%
\overline{\KR}_{3} = 
3+\frac{1}{q^2}+q^2+\frac{q^4}{t^2}+\frac{q^6}{t^2}+\frac{t^2}{q^6}+\frac{t^2}{q^4}
}
$
\newline
\\
\hline
%%%%%%%%%%%%%%%%%%%%%%%%%%%%%%%%%%%%%%%%%%%%%%%%%%%%
$6_{2}^3\,\text{{\tiny (v2)}}$ 
\includegraphics[scale=0.07,angle=0]{link6_2_3.pdf} 
& 
\newline 
$
\displaystyle{%
\overline{\KR}_{2} = 4+\frac{q^6}{t^3}+\frac{3 q^4}{t^2}+\frac{2 q^2}{t}+\frac{2 t}{q^2}+\frac{3 t^2}{q^4}+\frac{t^3}{q^6}
}
$
\newline 
$
\displaystyle{%
\overline{\KR}_{3} = 5+\frac{2}{q^2}+2 q^2+\frac{q^8}{t^3}+\frac{q^4}{t^2}+\frac{2 q^6}{t^2}+\frac{2 q^2}{t}+\frac{2 t}{q^2}+\frac{2 t^2}{q^6}+\frac{t^2}{q^4}+\frac{t^3}{q^8}
}
$
\newline
$
\displaystyle{%
\overline{\KR}_{4} = 6+\frac{2}{q^4}+\frac{3}{q^2}+3 q^2+2 q^4+\frac{q^{10}}{t^3}+\frac{q^4}{t^2}+\frac{2 q^8}{t^2}+\frac{2 q^2}{t}+\frac{2 t}{q^2}+\frac{2 t^2}{q^8}+\frac{t^2}{q^4}+\frac{t^3}{q^{10}}
}
$
\newline 
\\
\hline
%%%%%%%%%%%%%%%%%%%%%%%%%%%%%%%%%%%%%%%%%%%%%%%%%%%%
$6_{3}^2\,\text{{\tiny (v1)}}$ 
\includegraphics[scale=0.07,angle=0]{link6_3_2.pdf} 
& 
\newline
$
\displaystyle{%
\overline{\KR}_{2} = 
\frac{t^4}{q^9}+\frac{t^3}{q^7}+\frac{3 t^2}{q^5}+\frac{q^3}{t^2}+\frac{2 t}{q^3}+\frac{2 q}{t}+\frac{2}{q}
}
$
\newline 
$
\displaystyle{%
\overline{\KR}_{3} = 
\frac{t^4}{q^{14}}+\frac{t^4}{q^{12}}+\frac{t^3}{q^{10}}+\frac{2 t^2}{q^8}+\frac{t^2}{q^6}+\frac{q^4}{t^2}+\frac{2 t}{q^4}+\frac{q^2}{t}+\frac{2}{q^2}+\frac{1}{t}
}
$
\newline\newline
Extends result of~\cite{r0508510} to $N=3$. 
\\
\hline
%%%%%%%%%%%%%%%%%%%%%%%%%%%%%%%%%%%%%%%%%%%%%%%%%%%%
$6_{3}^2\,\text{{\tiny (v2)}}$ 
\includegraphics[scale=0.07,angle=0]{link6_3_2.pdf} 
& 
\newline
$
\displaystyle{%
q^{3-3 N}+q^{1-3 N} t+q^{3-5 N} t^2+2 q^{-3 N-1} t^2+2 q^{1-5 N} t^3+q^{-5 N-1} t^4-\frac{q^{-5 N-1} t^4}{q^2-1}+\frac{q^{-3 N-3} t^4}{q^2-1}+q^{1-7 N} t^5
}
$
\newline
$
\displaystyle{%
\qquad\qquad +q^{-5 N-3} t^5+q^{-7 N-1} t^6
}
$
\newline\newline
Checked up to $N=3$. Extends result of~\cite{r0508510} to $N=3$. 
\\
\hline
%%%%%%%%%%%%%%%%%%%%%%%%%%%%%%%%%%%%%%%%%%%%%%%%%%%%
$6_{3}^3\,\text{{\tiny (v1)}}$
\includegraphics[scale=0.07,angle=0]{link6_3_3.pdf} 
& 
\newline
$
\displaystyle{%
\overline{\KR}_{2} = \frac{1}{q^4}+\frac{t^2}{q^8}+\frac{t^3}{q^{10}}+\frac{2 t^4}{q^{12}}+\frac{t^4}{q^{10}}
}
$
\newline 
$
\displaystyle{%
\overline{\KR}_{3} = \frac{1}{q^8}+\frac{t^2}{q^{12}}+\frac{t^3}{q^{16}}+\frac{2 t^4}{q^{18}}+\frac{3 t^4}{q^{16}}+\frac{t^4}{q^{14}}+\frac{t^6}{q^{22}}+\frac{t^6}{q^{20}}
}
$
\newline
\\
\hline
%%%%%%%%%%%%%%%%%%%%%%%%%%%%%%%%%%%%%%%%%%%%%%%%%%%%
$6_{3}^3\,\text{{\tiny (v2)}}$
\includegraphics[scale=0.07,angle=0]{link6_3_3.pdf} 
& 
\newline
$
\displaystyle{%
\overline{\KR}_{2} = 2+q^2+\frac{q^8}{t^4}+\frac{q^4}{t^2}+\frac{q^2}{t}
}
$
\newline 
$
\displaystyle{%
\overline{\KR}_{3} = 2+2 q^2+q^4+\frac{q^{10}}{t^4}+\frac{q^{12}}{t^4}+\frac{2 q^6}{t^2}+\frac{q^8}{t^2}+\frac{q^2}{t}
}
$
\newline
\\
\hline
%%%%%%%%%%%%%%%%%%%%%%%%%%%%%%%%%%%%%%%%%%%%%%%%%%%%
\end{longtable}
}

\appendix

\section{The construction}\label{section:constrkr}

The construction as originally given in \cite{kr0401268} has many beautiful properties but can strike the newcomer as looking unmotivated. Why these matrix factorisations, and why these maps? Things becomes clearer as one reads on \cite{k0510265,RouquierMexico} but in order to make this apparent from the outset we are going to take as our starting point a simple adjunction involving symmetric polynomials. %For the experts this will be a familiar story, but along the way there are some technical innovations that we would like to advertise.

\subsection{Symmetric polynomials and adjunction}

Consider the inclusion
\[
\varphi: A = \QQ[x_1 + x_2, x_1 x_2] \lto \QQ[x_1,x_2] = S
\]
of symmetric polynomials in all polynomials. We give the variables $x_i$ degree $2$. As with any degree zero ring morphism, $\varphi$ determines adjunctions between the categories of graded modules and degree zero morphisms: 
\[
\xymatrix@C+3pc{
\Grmodd(A) \ar@/^1pc/[r]^-{\varphi^*} \ar@/_1pc/[r]_-{\varphi^!} & \Grmodd(S) \ar[l]^-{\varphi_*}
}, \qquad
\xymatrix{
\varphi^* \ar@{-|}[r] & \varphi_* \ar@{-|}[r] & \varphi^!\,.
}
\]
Here $\varphi_*$ denotes restriction of scalars, $\varphi^* = S \otimes_A (-)$ is extension of scalars and $\varphi^! = \Hom_A(S, -)$. Any polynomial $f \in S$ can be written uniquely as $f_1 + f_2(x_1-x_2)$ for symmetric polynomials $f_i$, so $S$ is a free $A$-module on the basis $\{1, x_1 - x_2\}$ and we write $1^*, (x_1-x_2)^*$ for the corresponding dual basis of $\Hom_A(S,A)$ as an $A$-module. The important point is that there is an isomorphism of graded $S$-modules (here $\{ m \}$ denotes shifting the grading \textsl{up} by $m$)
\[
\psi: S \lto \Hom_A(S,A)\{2\} \, , \qquad \psi(1) = 2(x_1-x_2)^* \, , \qquad \psi(x_1-x_2) = 2 \cdot 1^* \, .
\]
Obviously we are free to choose the normalisation: the factors of $2$ will be justified later. From this we deduce for any graded $A$-module $N$ a natural isomorphism of graded $S$-modules
\[
\varphi^!(N) = \Hom_A(S,N) \cong \Hom_A(S,A) \otimes_A N \stackrel{\psi}{\cong} S \otimes_A N \{-2\} = \varphi^*(N)\{-2\} \, ,
\]
so that, up to a grading shift, the left and right adjoints of $\varphi_*$ are naturally isomorphic. One way to look at this is that the map $\psi$ has provided us with an adjunction $\xymatrix@1{\varphi_*\, \ar@{-|}[r] & \,\varphi^*\{-2\}}$. We say that the functors $\varphi_*, \varphi^*$ are \textsl{biadjoint}. By a process of stabilisation (discussed in the next subsection) this biadjunction will give rise to a biadjunction on the level of matrix factorisations, which is the starting point for the Khovanov-Rozansky construction.

The units and counits of these adjunctions will give rise to the morphisms $\chi_0, \chi_1$ between $\Xbul, \Xcirc$ mentioned above. The units are described for a graded $A$-module $N$ and graded $S$-module $M$ by
\begin{align*}
\eta: N & \lto \varphi_* \varphi^*(N) \, , \qquad & n & \longmapsto 1 \otimes n \, ,\\
\eta': M & \lto \varphi^* \varphi_*(M)\{-2\} \, , \qquad & m & \longmapsto \frac{1}{2}(x_1-x_2) \otimes m + 1 \otimes \frac{1}{2}(x_1 - x_2) \cdot m
\end{align*}
and the counits are given by
\begin{align*}
\varepsilon: \varphi^* \varphi_*(M) & \lto M \, , \qquad & b \otimes m & \longmapsto b \cdot m \, ,\\
\varepsilon': \varphi_*\varphi^*(N)\{-2\} & \lto N \, , \qquad & b \otimes n & \longmapsto \psi(b)(1) \cdot n \, .
\end{align*}
To make the transition from modules to matrix factorisations, we present the biadjunction in terms of graded bimodules and bimodule maps. It is clear that $\varphi^*$ is given by tensoring with the $S$-$A$-bimodule ${}_S S_A$, and likewise $\varphi_*$ is represented by the $A$-$S$-bimodule ${}_A S_S$. Hence the composite $\varphi^* \varphi_*$ is represented by ${}_S S \otimes_A S {}_S$ and $\varphi_* \varphi^*$ by ${}_A S {}_A$. Let us abuse notation and reuse $\eta, \eta', \varepsilon, \varepsilon'$ for the morphisms between these bimodules which represent the units
\begin{align*}
\eta: A & \lto S \, , \qquad & a & \longmapsto \varphi(a)\,,\\
\eta': S & \lto S \otimes_A S\{-2\} \, , \qquad & b &\longmapsto \frac{1}{2}(x_1 - x_2) \otimes b + 1 \otimes \frac{1}{2}(x_1 - x_2)b
\end{align*}
and counits
\begin{align*}
& \varepsilon: S \otimes_A S \lto S \, , \qquad & b \otimes b' & \longmapsto bb'\,,\\
& \varepsilon': S\{-2\} \lto A \, , \qquad & b & \longmapsto \psi(b)(1) \, . 
\end{align*}

\begin{lemma}\label{lemma:bimodules_as_algebras} Viewing a $\QQ[x_1,x_2,y_1,y_2]$-module as an $S$-bimodule by identifying the action of $y_1, y_2$ with the right action of $x_1, x_2$, respectively, there are isomorphisms of graded $S$-bimodules
\begin{align*}
S \otimes_A S & \cong \QQ[x_1,x_2,y_1,y_2]/(y_1 + y_2 - x_1 - x_2, y_1 y_2 - x_1 x_2) \, ,\\
S & \cong \QQ[x_1,x_2,y_1,y_2]/(y_1 - x_1, y_2 - x_2)\,.
\end{align*}
\end{lemma}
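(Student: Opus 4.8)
The plan is to obtain both isomorphisms from the single observation that $S$ is, over $A$, a free quadratic extension. Write $e_1 = x_1 + x_2$ and $e_2 = x_1 x_2$ for the algebra generators of $A$, carrying degrees $2$ and $4$. First I would check that the $A$-algebra homomorphism $A[t] \to S$, $t \mapsto x_1$, induces an isomorphism of graded $A$-algebras
\[
A[t]/(t^2 - e_1 t + e_2) \;\xrightarrow{\ \sim\ }\; S \, ,
\]
where $t$ is given degree $2$: the relation $x_1^2 - e_1 x_1 + e_2 = 0$ holds in $S$ so the map is well defined, it is surjective since $x_2 = e_1 - x_1$ lies in its image, and it is injective because $\{1, x_1\}$ is an $A$-basis of $S$ — equivalently the basis $\{1, x_1 - x_2\}$ recorded above — so the source, which is manifestly $A$-free on $\{1, t\}$, maps onto it by a basis-to-basis map.

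Next I would base-change along the right-hand copy of $S$. Since $A[t] \otimes_A S = S[t]$, tensoring the previous isomorphism with $S$ over $A$ (using the \emph{right} $A$-action on the right factor, which under $A \hookrightarrow S$ sends $e_1 \mapsto y_1 + y_2$, $e_2 \mapsto y_1 y_2$ after renaming the right variables) yields an isomorphism of graded $S$-bimodules
\[
S \otimes_A S \;\cong\; S[t]\big/\big(t^2 - (y_1 + y_2)\, t + y_1 y_2\big) \, ,
\]
where now $S$ denotes the right factor with variables $y_1, y_2$, the class of $t$ is $x_1 \otimes 1$, and the left $S$-action is by $x_1 \leftrightarrow t$ and $x_2 \leftrightarrow (y_1 + y_2) - t$. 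To match this with the asserted presentation I would eliminate $x_2$ from the right-hand side of the claimed formula: substituting $x_2 = y_1 + y_2 - x_1$ from the first relation, the second relation $y_1 y_2 - x_1 x_2$ becomes $x_1^2 - (y_1+y_2) x_1 + y_1 y_2$, so
\[
\QQ[x_1,x_2,y_1,y_2]/(y_1 + y_2 - x_1 - x_2,\ y_1 y_2 - x_1 x_2) \;\cong\; \QQ[y_1,y_2][x_1]\big/\big(x_1^2 - (y_1+y_2) x_1 + y_1 y_2\big) \, ,
\]
which is exactly the displayed bimodule with $t = x_1$; one checks along the way that under these identifications the left $S$-action is multiplication by $x_1, x_2$ and the right $S$-action is multiplication by $y_1, y_2$, as demanded. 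The second isomorphism is immediate: $\QQ[x_1,x_2,y_1,y_2]/(y_1 - x_1,\ y_2 - x_2) \to \QQ[x_1,x_2] = S$, $x_i, y_i \mapsto x_i$, is a degree-preserving ring isomorphism under which both the left action of $S$ and the right action (through the $y_i$) become ordinary multiplication, so this bimodule is ${}_S S_S$.

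The step I expect to demand the most care — the ``main obstacle'', such as it is — is the bookkeeping of module structures: verifying that each identification ($S \cong A[t]/(t^2 - e_1 t + e_2)$, the base change $-\otimes_A S$, and the elimination of $x_2$) respects the left and right $S$-actions and the grading, so that the composite is genuinely an isomorphism of graded $S$-bimodules and not merely of rings. No hypothesis on the characteristic is needed here; the only real input is the freeness of $S$ over $A$. If one prefers to stay closer to the explicit units and counits above, an alternative for the first isomorphism is to define the evident surjection $\QQ[x_1,x_2,y_1,y_2] \twoheadrightarrow S \otimes_A S$ (sending $x_i \mapsto x_i \otimes 1$, $y_i \mapsto 1 \otimes y_i$), observe that it kills $y_1 + y_2 - x_1 - x_2$ and $y_1 y_2 - x_1 x_2$ because symmetric polynomials pass across $\otimes_A$, and then prove injectivity of the induced map by comparing the free $S$-module bases $\{1 \otimes 1,\ 1 \otimes (y_1 - y_2)\}$ of $S \otimes_A S$ and $\{1, y_1\}$ of the quotient ring, which are related by a change of basis with determinant $\tfrac12$ and hence invertible over $S$ in characteristic zero.
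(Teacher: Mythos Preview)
Your proof is correct. The paper states this lemma without proof, treating both isomorphisms as evident; your argument via the quadratic presentation $S \cong A[t]/(t^2 - e_1 t + e_2)$ followed by base change is a clean and standard way to supply the details, and your bookkeeping of the left/right $S$-actions and gradings is accurate. The alternative surjection-and-basis-count argument you sketch at the end is essentially the direct verification one would expect the authors had in mind, and your determinant computation is right (indeed here, unlike in the main argument, one genuinely uses that $2$ is invertible).
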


Moreover it is clear that these isomorphisms can be chosen to make the diagrams
\begin{equation}
\label{eq:bimodalg1}
\xymatrix{
S \otimes_A S \ar[d]_{\varepsilon} \ar[r]^-{\cong} & \QQ[x_1,x_2,y_1,y_2]/(y_1 + y_2 - x_1 - x_2, y_1 y_2 - x_1 x_2) \ar[d]^{\can}\\
S \ar[r]_-{\cong} & \QQ[x_1,x_2,y_1,y_2]/(y_1 - x_1, y_2 - x_2)
}
\end{equation}
and
\begin{equation}
\label{eq:bimodalg2}
\xymatrix{
S \ar[r]^-{\cong}\ar[d]_{\eta'} & \QQ[x_1,x_2,y_1,y_2]/(y_1 - x_1, y_2 - x_2) \ar[d]^{\frac{1}{2}(x_1 + y_1 - x_2 - y_2)}\\
S \otimes_A S\{-2\} \ar[r]_-{\cong} & \QQ[x_1,x_2,y_1,y_2]/(y_1 + y_2 - x_1 - x_2, y_1 y_2 - x_1 x_2)\{-2\}
}
\end{equation}
commute. In the first diagram the vertical map on the right comes about because $y_1 + y_2 - x_1 - x_2$ and $y_1 y_2 - x_1 x_2$ belong to the ideal generated by the $y_i - x_i$. We note that the right-hand vertical map in (\ref{eq:bimodalg2}) is the determinant of the matrix which writes the regular sequence $(y_1 + y_2 - x_1 - x_2, y_1 y_2 - x_1 x_2)$ in terms of $(y_1 - x_1, y_2 - x_2)$, and if one thinks in terms of Koszul complexes it is clear that (\ref{eq:bimodalg1}) is in some sense dual to (\ref{eq:bimodalg2}). This will be made explicit at the end of Section \ref{section:stabilisation}.

Now we make the jump to matrix factorisations. If $V$ is a homogeneous symmetric polynomial of even degree then the functors $\varphi_*$ and $\varphi^*$ lift to a biadjoint pair on the triangulated categories of finite-rank graded matrix factorisations
\[
\xymatrix@C+2pc{
\hmf(A, V) \ar@<1ex>[r]^{\varphi^*} & \hmf(S, V) \ar@<1ex>[l]^{\varphi_*} \, .
}
\]
Let us fix for example the polynomial $V = x_1^{N+1} + x_2^{N+1}$. These triangulated categories are algebraic, and the general theory of \cite[Section 8.1]{RouquierMexico} (see also \cite{rickard}) applies to produce an interesting autoequivalence of $\K^b( \hmf(S, V) )$. Namely, tensoring with the complexes of graded bimodules
\begin{equation}\label{eq:bimod_cpx1}
\xymatrix{
0 \ar[r] & S \otimes_A S \ar[r]^-{\varepsilon} & \underline{S} \ar[r] & 0 \, ,
}
\end{equation}
and
\begin{equation}\label{eq:bimod_cpx2}
\xymatrix{
0 \ar[r] & \underline{S} \ar[r]^-{\eta'} & S \otimes_A S\{-2\} \ar[r] & 0
}
\end{equation}
determines endofunctors on $\K^b( \hmf(S, V) )$ which are mutually inverse equivalences. In fact these equivalences are the ones associated by Khovanov and Rozansky to over- and under-crossings, so the whole theory can be built out of (\ref{eq:bimod_cpx1}) and (\ref{eq:bimod_cpx2}). But there is an additional step before we can phrase the construction in the original terms: writing $T = S/(V)$ there is an equivalence
\[
\hmf(S, V) \cong \qderu{b}{\textup{grmod}\, T}/\K^b(\textup{grproj}\, T)\,.
\]
The intuition is that a graded matrix factorisation of $V$ remembers the asymptotic part of a graded free resolution of a graded $T$-module. The functors defined on $\hmf(S,V)$ in terms of tensoring with the bimodules $S \otimes_A S$ and $S$ will only depend on the asymptotic part of the resolutions of these bimodules over $T$ (see Appendix \ref{subsection:kernelfunctors}). It is therefore more economical to replace the bimodules in (\ref{eq:bimod_cpx1}) and (\ref{eq:bimod_cpx2}) by the matrix factorisations which remember this asymptotic data, and the bimodule morphisms by the morphisms of matrix factorisations coming from the asymptotic part of the lifted morphism to the graded free resolutions.

This process is known as \textsl{stabilisation}, and will take place in the next subsection. We will stabilise $S \otimes_A S$ to a matrix factorisation $\Xbul$ and $S$ to $\Xcirc$. The morphisms $\varepsilon$ and $\eta'$ will stabilise to morphisms $\chi_1: \Xbul \lto \Xcirc$ and $\chi_0: \Xcirc \lto \Xbul\{-2\}$, respectively, and with these stabilisations in hand we will be ready to define the $\sln$ link homology.

\begin{remark} The bimodule $S \otimes_A S$ is the simplest kind of \textsl{Soergel bimodule}. The connection between the triply-graded $\sln$ link homology and Soergel bimodules was uncovered in \cite{k0510265} and similar ideas give the connection to the bigraded $\sln$ link homology considered here; see~\cite[Section~2.3]{w0610650} where the theory is presented in terms of stabilisation of Soergel bimodules. The relation to Soergel bimodules is also explained very clearly in \cite{b1105.0702}.
\end{remark}

\subsection{Stabilisation}\label{section:stabilisation}

We will now present the explicit matrix factorisations and morphisms which stabilise the adjunction given in the previous subsection. We begin with a general definition: let $R$ be a graded ring, $W \in R_{2c}$ a homogeneous element of even degree and $M$ a finitely generated graded $R/(W)$-module. The \textsl{stabilisation} of $M$ with respect to $W$ is a finite-rank graded matrix factorisation $X_M$ of $W$ over $R$ together with a morphism of linear factorisations $\pi: X_M \lto M$ with the property that
\[
\Hom_R(Y, X_M) \lto \Hom_R(Y, M), \qquad f \mapsto \pi \circ f
\]
is a quasi-isomorphism for every finite-rank graded matrix factorisation $Y$ of $W$.

Clearly the stabilisation, if it exists, is unique up to homotopy equivalence. Take $S = \QQ[x_1,x_2]$ as before and $R = \QQ[x_1,x_2,y_1,y_2]$. The reader wanting to compare with \cite[Section 6]{kr0401268} should make the change of variables $x_1,x_2,y_1,y_2 \longleftrightarrow x_4,x_3,x_1,x_2$. Fix an integer $N > 0$ and define
\begin{align*}
W &= y_1^{N+1} + y_2^{N+1} - x_1^{N+1} - x_2^{N+1}\,,\\
t_i &= y_i - x_i\,,\\
s_1 &= y_1 + y_2 - x_1 - x_2\,,\\
s_2 &= y_1 y_2 - x_1 x_2\,.
\end{align*}
As in Lemma \ref{lemma:bimodules_as_algebras} we identify $S \otimes_A S$ and $S$ with the graded $R$-modules $R/(\bs{s})$ and $R/(\bs{t})$. Both are killed by $W$ and therefore give $R/(W)$-modules. The stabilisations of these $R/(W)$-modules exist and are described using cyclic Koszul complexes. Note that
\begin{equation}\label{eq:cons1}
W = w_1 t_1 + w_2 t_2 \, , \qquad w_i = \frac{y_i^{N+1} - x_i^{N+1}}{y_i - x_i} \, ,
\end{equation}
and due to the symmetry of $W$ there are polynomials $u_1,u_2$ such that
\begin{equation}\label{eq:cons2}
W = u_1 \cdot ( y_1 + y_2 - x_1 - x_2 ) + u_2 \cdot ( y_1 y_2 - x_1 x_2 ) \, .
\end{equation}
Then in the notation of Section \ref{prelim:cyclic_koszul}, we define graded matrix factorisations of $W$ over $R$ by
\begin{align}
\Xbul &= \{ (u_1, u_2)\, , (y_1 + y_2 - x_1 - x_2, y_1 y_2 - x_1 x_2) \} \, , \nonumber \\
\Xcirc &= \{ (w_1, w_2) \, , (t_1, t_2) \}\,. \label{XbulXcirc}
\end{align}
Of course there is some indeterminacy in the choice of the $u_i$'s, but up to homotopy equivalence the factorisation $\Xbul$ does not depend on the choice of $u_i$'s (this is a standard fact, see for example \cite[Lemma 3.10]{r0607544}, and will also be clear from the description of $\Xbul$ as the solution of a universal problem). Moreover, we can and do assume that the polynomials $u_i$ are invariant under interchanging the $x_{i}$'s with the $y_{i}$'s, that is, we arrange that $u_i(y_1,y_2,x_1,x_2) = u_i(x_1,x_2,y_1,y_2)$. We will manage to avoid discussing the explicit form of these polynomials, making use only of (\ref{eq:cons2}) and the symmetry just mentioned.

Recall the model of the Koszul complex from Section \ref{prelim:cyclic_koszul}: we set $F = R \theta_1 \oplus R \theta_2$, where the $\theta_i$'s are formal symbols of $\Ztwo$-degree $-1$. Then both $\Xbul$ and $\Xcirc$ have underlying $\Ztwo$-graded module $\bigwedge F$ and the differentials are given by
\begin{align*}
\Xbul & = ( \bigwedge F, \beta_+ + \beta_{-}) \, , & \beta_+ & = \Big(\sum_i s_i \theta_i^*\Big) \neg\, (-) \, , & \beta_{-} & = \Big(\sum_i u_i \theta_i\Big) \wedge (-) \, , \\
\Xcirc & = ( \bigwedge F, \delta_+ + \delta_{-}) \, , & \delta_+ & = \Big(\sum_i t_i \theta_i^*\Big) \neg\, (-) \, , & \delta_{-} & = \Big(\sum_i w_i \theta_i\Big) \wedge (-) \, .
\end{align*}
In the $\mathds{Z}$-grading on $\bigwedge F$ the differentials with subscripts $\pm$ increase/decrease degree, so it is clear that there are morphisms of linear factorisations (viewing modules as factorisations of zero)
\begin{align*}
& \pi_{\bullet}: \Xbul = ( \bigwedge F, \beta_+ + \beta_{-}) \lto {\bigwedge}^0 F/\Im(\beta_+^0) = R/(\boldsymbol{s}) \, ,\\
& \pi_{\circ}: \Xcirc = ( \bigwedge F, \delta_+ + \delta_{-}) \lto {\bigwedge}^0 F/\Im(\delta_+^0) = R/(\boldsymbol{t}) \, .
\end{align*}

\begin{proposition} The morphisms $\pi_{\bullet}$ and $\pi_{\circ}$ are the stabilisations of $R/(\bs{s})$ and $R/(\bs{t})$, respectively.
\end{proposition}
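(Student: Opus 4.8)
The plan is to deduce both assertions from a single general fact about cyclic Koszul factorisations, and then to check its hypotheses in the case at hand. The general fact is: if $\bs{b} = (b_1,\ldots,b_n)$ is a homogeneous regular sequence in a graded ring $R$ and $W = \sum_i a_i b_i$ with all $a_i$ homogeneous (so that $W \in (\bs{b})$ and $R/(\bs{b})$ is indeed a linear factorisation of $W$ with zero differential), then the cyclic Koszul matrix factorisation $\{(\bs{a}),(\bs{b})\}$, together with the surjection from $\bigwedge^{0}F = R$ onto $R/(\bs{b})$ that kills the image of the Koszul differential, is the stabilisation of $R/(\bs{b})$ with respect to $W$. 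To obtain the Proposition from this I would proceed as follows. By Lemma~\ref{lemma:bimodules_as_algebras}, $S\otimes_A S = R/(\bs{s})$ and $S = R/(\bs{t})$ for the sequences $\bs{t} = (y_1 - x_1,\, y_2 - x_2)$ and $\bs{s} = (y_1 + y_2 - x_1 - x_2,\, y_1 y_2 - x_1 x_2)$ in $R = \QQ[x_1,x_2,y_1,y_2]$. These are regular sequences: $R$ is Cohen--Macaulay, so a homogeneous sequence of length $2$ is regular as soon as the quotient has Krull dimension $\dim R - 2 = 2$, and indeed $R/(\bs{t}) \cong \QQ[x_1,x_2]$ while $S\otimes_A S$ is finite over $\QQ[x_1,x_2]$, both of dimension $2$. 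By (\ref{eq:cons1}) and (\ref{eq:cons2}) the potential decomposes as $W = \sum_i w_i t_i = \sum_i u_i s_i$, so the general fact applies to $(\bs{a},\bs{b}) = (\bs{w},\bs{t})$, giving $\Xcirc = \{(\bs{w}),(\bs{t})\}$, and to $(\bs{a},\bs{b}) = (\bs{u},\bs{s})$, giving $\Xbul = \{(\bs{u}),(\bs{s})\}$. Finally, the augmentation produced by the general fact coincides with $\pi_\circ$ (resp.\ $\pi_\bullet$): the Koszul differential is contraction against $\sum_i t_i\theta_i^*$ (resp.\ $\sum_i s_i\theta_i^*$), so its image in $\bigwedge^{0}F = R$ is the ideal $(t_1,t_2)$ (resp.\ $(s_1,s_2)$), and the quotient is $R/(\bs{t})$ (resp.\ $R/(\bs{s})$) as in the statement.

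To prove the general fact I would argue as follows. Since $\bs{b}$ is regular, $(\bigwedge^{\bullet}F, \delta_+)$ is a finite graded free resolution of $R/(\bs{b})$ over $R$, and the co-Koszul map $\delta_- = (\sum_i a_i\theta_i)\wedge(-)$ satisfies $\delta_+\delta_- + \delta_-\delta_+ = (\sum_i a_i b_i)\cdot\mathrm{id} = W\cdot\mathrm{id}$, so $\delta_-$ is an explicit null-homotopy for multiplication by $W$ on this resolution; folding the resolution into its $\Ztwo$-reduction and adding $\delta_-$ to the differential produces exactly $\{(\bs{a}),(\bs{b})\}$, with the augmentation of the resolution inducing the morphism $\pi$. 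Now, by the standard equivalence between $\hmf(R,W)$ and the singularity category $D_{\mathrm{sg}}(R/(W))$ (which applies because $W$ has an isolated singularity), the stabilisation of an $R/(W)$-module $M$ is characterised as a finite-rank matrix factorisation mapping isomorphically onto $M$ in $D_{\mathrm{sg}}(R/(W))$ — this isomorphism being precisely the Hom-quasi-isomorphism condition in the definition of stabilisation. So it suffices to show that $\pi$ becomes an isomorphism in $D_{\mathrm{sg}}(R/(W))$, i.e.\ that the kernel of the induced surjection $\Coker(d_{X}^{1}) \to R/(\bs{b})$ (where $X = \{(\bs{a}),(\bs{b})\}$) is a projective, in fact free, $R/(W)$-module. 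This is visible from the exactness of the augmented Koszul complex $0 \to \bigwedge^{n}F \to \cdots \to \bigwedge^{0}F \to R/(\bs{b}) \to 0$: the kernel is the image in $\Coker(d_{X}^{1})$ of the span of the exterior monomials of positive exterior degree, and exactness forces the only syzygies among these images modulo $(W)$ to be the obvious ones, so the image is $R/(W)$-free. For $n = 2$ this is a one-line check: the kernel is generated by the class of $\theta_1\wedge\theta_2$, which is annihilated by exactly $(W)$ — because the only relations $a_1 t_1 + a_2 t_2 = 0$ in the domain $R$ are the Koszul ones — so the kernel is free of rank one.

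The step that demands the most care — and the main obstacle — is precisely this last identification of the kernel. The difficulty is that the two halves $\delta_+$ and $\delta_-$ of the differential of $\{(\bs{a}),(\bs{b})\}$ move in \emph{opposite} directions along the exterior degree, so $\operatorname{cone}(\pi)$ carries no naive filtration by exterior degree and one cannot simply run a spectral sequence off the exact augmented resolution; passing through $D_{\mathrm{sg}}(R/(W))$, where $\operatorname{cone}(\pi)$ is killed the moment $\pi$ is an isomorphism there, is the clean way to sidestep this. One must also track internal degrees throughout: with the chosen homogeneous representatives $\bs{w},\bs{t}$ and $\bs{u},\bs{s}$, the modules $R/(\bs{t})$ and $R/(\bs{s})$ are generated in internal degree $0$, matching $\bigwedge^{0}F$, so $\pi_\circ$ and $\pi_\bullet$ are genuinely degree-zero morphisms and no grading shift enters this Proposition — in contrast with the shift $\{-2\}$ that will appear when the morphisms $\varepsilon$ and $\eta'$ are stabilised. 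The computation in the case at hand is already in \cite{kr0401268}, so in the final write-up I would either invoke it or reproduce the short argument above.
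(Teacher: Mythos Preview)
Your argument is essentially correct and the $n=2$ kernel computation is clean, but the route is genuinely different from the paper's. The paper proves the proposition in Appendix~\ref{section:morphismpert} via the homological perturbation lemma: over a field $S \subset R$ the augmented Koszul complex $(\bigwedge F,\delta_+)\to R/(\bs{b})$ is split exact, giving a deformation retract $(\pi,\sigma,H)$; after tensoring with $Y^\vee$ and perturbing by $\mu = d_{Y^\vee}\otimes 1 + 1\otimes\delta_-$, the perturbation lemma produces an \emph{explicit} $S$-linear homotopy inverse $\sigma_\infty = \sum_{m\ge 0}(-1)^m(H\mu)^m\sigma$ to $1\otimes\pi$, whence the required quasi-isomorphism. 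You instead pass through the Buchweitz--Orlov equivalence $\hmf(R,W)\simeq D_{\mathrm{sg}}(R/(W))$ and reduce to showing that $\ker\big(\Coker(d_X^1)\twoheadrightarrow R/(\bs{b})\big)$ is free of rank one over $R/(W)$.

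What each buys: your approach is the classical Eisenbud--Buchweitz line and is conceptually tidy once the singularity-category machinery is granted; the paper's perturbation argument is chosen deliberately because the explicit $\sigma_\infty$ is then evaluated on $1^*\otimes 1$ to obtain the closed formula for $\chi_1$ in Lemma~\ref{lemma:consfirst}, so this proof feeds directly into the stabilisation of morphisms that follows. Two points to tighten in your write-up: (i) the hypothesis needed for the equivalence with $D_{\mathrm{sg}}(R/(W))$ is regularity of $R$ (automatic for a polynomial ring), not that $W$ has an isolated singularity; (ii) to pass from ``the kernel is free over $R/(W)$'' to the paper's Hom-quasi-isomorphism definition you still need that $\Hom_R(Y,R/(W))$ is acyclic for every finite-rank $Y$ --- this is elementary (from $d_Y^1 d_Y^0 = W$ with $W$ a non-zerodivisor on free modules, $Y^\vee\!/(W)$ is exact), but it is precisely the ingredient that converts your $D_{\mathrm{sg}}$-statement into the stated universal property, and should be made explicit rather than absorbed into the phrase ``this isomorphism being precisely the Hom-quasi-isomorphism condition''.
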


That $\pi_{\circ}$ stabilises the diagonal is a theorem of Dyckerhoff \cite{d0904.4713} and the same argument settles related cases like $\pi_{\bullet}$. We give a new proof in  Appendix \ref{section:morphismpert} using different methods; methods that will help us stabilise morphisms later. What Dyckerhoff actually shows (and we recall in Appendix \ref{subsection:kernelfunctors}) is that the functors determined by these matrix factorisations agree with the functors determined by the bimodules, that is, both squares implicit in the diagram 
\[
\xymatrix@C+3pc@R+1pc{
\hmf(\QQ[x_1,x_2], x_1^{N+1} + x_2^{N+1})\ar[d]_-{\inc}\ar@<1ex>[r]^-{(-) \otimes R/(\bs{s})}
\ar@<-1ex>[r]_-{(-) \otimes R/(\bs{t})}
& \hmf(\QQ[y_1,y_2], y_1^{N+1} + y_2^{N+1})\ar[d]^-{\inc}\\
\HMF(\QQ[x_1,x_2], x_1^{N+1} + x_2^{N+1})\ar@<1ex>[r]^-{(-) \otimes \Xbul}
\ar@<-1ex>[r]_-{(-) \otimes \Xcirc}
& \HMF(\QQ[y_1,y_2], y_1^{N+1} + y_2^{N+1})
}
\]
commute up to natural isomorphism; see also \cite{b1105.0702}. Here $(-) \otimes R/(\bs{t})$ is the identity functor (up to relabelling variables) and $(-) \otimes R/(\bs{s}) = \varphi^* \varphi_*$ is the result of restricting to symmetric polynomials and coming back up. Having obtained a description of these two functors on $\hmf(S, x_1^{N+1} + x_2^{N+1})$ in terms of matrix factorisations, it remains to stabilise the complexes in (\ref{eq:bimod_cpx1}) and (\ref{eq:bimod_cpx2}), that is, we need to stabilise the maps $\varepsilon$ and $\eta'$. 

The first observation is that, by the proposition just stated, the map
\[
\Hom(1,\pi_{\circ}): \Hom_R(\Xbul, \Xcirc) \lto \Hom_R(\Xbul, R/(\boldsymbol{t}))
\]
is a quasi-isomorphism. By definition the morphism $\chi_1: \Xbul \lto \Xcirc$ stabilising $\varepsilon$ is the cohomology class of $\Hom_R(\Xbul, \Xcirc)$ corresponding under the above quasi-isomorphism to the composite
\begin{equation}\label{eq:cons4}
\xymatrix{
\Xbul \ar[r]^-{\pi_{\bullet}} & R/(\boldsymbol{s}) \ar[r]^-{\can} & R/(\boldsymbol{t})
}
\end{equation}
where the second map comes about because of the inclusion of ideals $(\boldsymbol{s}) \subseteq (\boldsymbol{t})$. That is, $\chi_1$ will be the unique (up to homotopy) morphism making the following diagram commute:
\begin{equation}\label{eq:cons5}
\xymatrix@C+2pc{
\Xbul \ar[d]_{\chi_1} \ar[r]^{\pi_{\bullet}} & R/(\boldsymbol{s}) \ar[d]^{\can}\\
\Xcirc \ar[r]_{\pi_{\circ}} & R/(\boldsymbol{t}) \, . 
}
\end{equation}

We know abstractly that $\chi_1$ exists. With patience one can construct by hand an explicit matrix for such a map, and this appears to be the approach of \cite{kr0401268} where the matrices are presented without explanation; see also~\cite[Section 2]{kr0505056} and \cite[Example 2.3.7]{b1105.0702}. In~\cite[Section 7.5]{w0907.0695} a method is developed for stabilising module maps of the form $R/(\bs{r}) \lto R/(\bs{r}')$ where $\bs{r},\bs{r}'$ are regular sequences and $(\bs{r}) \subseteq (\bs{r}')$, and this can obviously be used to give an explicit matrix for $\chi_1$.

We are going to give yet another derivation, based on the observation that the same perturbation techniques of \cite{dm1102.2957} that go into the construction of the idempotent in Section \ref{compilewebs} also provide an explicit homotopy inverse of $\Hom(1,\pi_{\circ})$, and therefore a matrix for $\chi_1$. The techniques can be used to lift any morphism $Y \lto R/(\bs{t})$ to a morphism $Y \lto \Xcirc$, and with other applications in mind we feel it is worth including the argument here. The statement is given by the next lemma (but we have relegated the details to Appendix~\ref{section:morphismpert}), for which we need to introduce the polynomial 
\[
\gamma = \frac{\partial u_1}{\partial y_1} - \frac{\partial u_1}{\partial y_2} - \frac{1}{2} \frac{\partial u_2}{\partial y_2}(x_2 + y_2) + \frac{1}{2} \frac{\partial u_2}{\partial y_1}(x_1 + y_1) \, .
\]
\begin{lemma}\label{lemma:consfirst} The partial differential equation 
\begin{equation}\label{eq:consfirst1}
2 z + \frac{\partial z}{\partial y_1} (y_1 - x_1) + \frac{\partial z}{\partial y_2}(y_2 - x_2) = \gamma
\end{equation}
in an unknown $z$ has a unique polynomial solution $\Omega_2(\gamma)$, and the map $\chi_1: \Xbul \lto \Xcirc$ defined by
\begin{align*}
\chi_1(1) &= 1 + \Omega_2( \gamma ) \cdot \theta_1 \theta_2,\\
\chi_1(\theta_1) &= \theta_1 + \theta_2,\\
\chi_1(\theta_2) &= \frac{1}{2}(x_2 + y_2)\cdot \theta_1 + \frac{1}{2}(x_1 + y_1)\cdot \theta_2,\\
\chi_1(\theta_1\theta_2) &= \frac{1}{2}(x_1 + y_1 - x_2 - y_2) \cdot \theta_1\theta_2
\end{align*}
is a morphism of graded matrix factorisations, making the diagram (\ref{eq:cons5}) commute up to homotopy.
\end{lemma}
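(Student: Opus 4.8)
The plan is to treat the two assertions separately. For the partial differential equation, I would change coordinates on $R = \QQ[x_1,x_2,y_1,y_2]$ to $t_i = y_i - x_i$ (keeping the $x_i$), under which $\partial/\partial y_i = \partial/\partial t_i$, so that the operator $z \mapsto 2z + \sum_i (\partial z/\partial y_i)\, t_i$ becomes $z \mapsto 2z + \sum_i t_i\,(\partial z/\partial t_i)$. On the monomial basis $t_1^a t_2^b x_1^c x_2^d$ this operator is diagonal, acting as multiplication by the positive integer $2 + a + b$; hence it is a bijection of $R$, and of each graded piece, with inverse given by dividing the coefficient of $t_1^a t_2^b$ by $2 + a + b$. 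This yields the unique polynomial solution $\Omega_2(\gamma)$ and shows that $\Omega_2$ preserves the $\ZZ$-grading, so that $\Omega_2(\gamma)$ is homogeneous of the same degree $2N-2$ as $\gamma$.

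For the second assertion I would first dispatch the bookkeeping: from $\deg u_1 = \deg w_1 = \deg w_2 = 2N$, $\deg u_2 = 2N-2$, $\deg s_1 = \deg t_1 = \deg t_2 = 2$, $\deg s_2 = 4$, $c = N+1$ and $\deg\gamma = 2N-2$, one reads off the internal degrees of the Koszul symbols in $\Xbul$ and $\Xcirc$ and checks that the four assignments defining $\chi_1$ are homogeneous of bidegree $(0,0)$ and preserve $\Ztwo$-degree, so that $\chi_1$ is a legitimate morphism of graded linear factorisations once $d_{\Xcirc}\circ\chi_1 = \chi_1\circ d_{\Xbul}$ is verified. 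Expanding this identity on the basis $\{1,\theta_1,\theta_2,\theta_1\theta_2\}$ of $\bigwedge F$ and sorting by exterior degree, I expect several components to collapse to the trivial identities $t_1+t_2 = s_1$ and $t_1(x_2+y_2)+t_2(x_1+y_1) = 2s_2$, while the remaining content is the pair
\[
w_1 - \Omega_2(\gamma)\,t_2 = u_1 + \tfrac12 u_2(x_2+y_2)\,, \qquad w_2 + \Omega_2(\gamma)\,t_1 = u_1 + \tfrac12 u_2(x_1+y_1)
\]
together with one further relation in $\bigwedge^2 F$. Granting the factorisation identity $w_1 t_1 + w_2 t_2 = u_1 s_1 + u_2 s_2$ (and the elementary $2 s_2 = t_1(x_2+y_2)+t_2(x_1+y_1)$), the second displayed equation and the $\bigwedge^2$-relation both follow from the first, so everything comes down to the single identity $w_1 - \Omega_2(\gamma)\,t_2 = u_1 + \tfrac12 u_2(x_2+y_2)$.

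To reach this I would observe that $P := w_1 - u_1 - \tfrac12 u_2(x_2+y_2)$ is divisible by $t_2 = y_2 - x_2$: setting $y_2 = x_2$ makes the factorisation identity read $w_1(y_1-x_1) = (y_1-x_1)(u_1 + x_2 u_2)\big|_{y_2=x_2}$, hence $w_1 = (u_1 + x_2 u_2)\big|_{y_2=x_2}$ and $P\big|_{y_2=x_2} = 0$. So $\Omega := P/t_2$ is a polynomial, the map $\chi_1$ built with $\Omega$ in place of $\Omega_2(\gamma)$ is a morphism of matrix factorisations by the reductions above, and it only remains to identify $\Omega$ with $\Omega_2(\gamma)$, i.e.\ to check that $\Omega$ solves \eqref{eq:consfirst1}. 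This last step is the main obstacle: it is where the symmetry $u_i(y_1,y_2,x_1,x_2) = u_i(x_1,x_2,y_1,y_2)$ and the precise combination of partials of $u_1,u_2$ defining $\gamma$ come into play. Rather than grind it out by hand, I would package it using the perturbation techniques of \cite{dm1102.2957}: these produce an explicit homotopy inverse of the quasi-isomorphism $\Hom(1,\pi_\circ)\colon \Hom_R(\Xbul,\Xcirc)\to\Hom_R(\Xbul,R/(\bs{t}))$, and applying it to the composite $\can\circ\pi_\bullet$ of \eqref{eq:cons4} returns precisely the stated formula for $\chi_1$ with coefficient $\Omega_2(\gamma)$; I would carry this computation out in detail in Appendix~\ref{section:morphismpert}. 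Finally, commutativity of \eqref{eq:cons5} up to homotopy is immediate — in fact on the nose — since $\pi_\bullet$ and $\pi_\circ$ annihilate everything of positive exterior degree, so both composites $\Xbul \to R/(\bs{t})$ send $1\mapsto\bar 1$ and kill $\theta_1,\theta_2,\theta_1\theta_2$.
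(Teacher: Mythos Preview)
Your proposal is correct but structurally roundabout compared to the paper's proof. Your treatment of the PDE is essentially identical to the paper's (Definition~\ref{defn:omega} and Example~\ref{example:compomega}): the operator is diagonal on the $t$-monomial basis with strictly positive eigenvalues. Your direct verification that the morphism condition on the four basis elements collapses to the single identity $w_1 - \Omega\, t_2 = u_1 + \tfrac12 u_2(x_2+y_2)$ is right and is a genuinely more elementary start than the paper's; your divisibility argument then produces the unique $\Omega = P/t_2$ making $\chi_1$ a morphism.

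The difference is in how the identification $\Omega = \Omega_2(\gamma)$ is handled. The paper does not attempt any direct verification at all: it simply runs the perturbation computation $\sigma_\infty(1^*\otimes 1) = (1 - H\mu + (H\mu)^2)\sigma(1^*\otimes 1)$ in Appendix~\ref{section:morphismpert}, which automatically yields a morphism (by Proposition~\ref{prop:pertapptechnical}) and in the process spits out the coefficient $\Omega_2(\gamma)$ on the $\theta_1\theta_2$-component. That single computation is the whole proof. You instead do the direct reduction first and then invoke the same perturbation computation to pin down the coefficient --- but once that computation is done it already proves the lemma, so your direct reduction becomes redundant. If you want to make your elementary approach self-contained, the missing step is to check directly that $P/t_2$ solves \eqref{eq:consfirst1}; this is exactly what the paper does in the lemma immediately following (where $P/t_2$ appears as $-a_2$), by differentiating the factorisation identity \eqref{eq:cons2} and substituting. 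Either route is fine, but mixing both does twice the work.
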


The only obstacle to having an explicit formula for $\chi_1$ is determining the solution $\Omega_2(\gamma)$ of the differential equation (\ref{eq:consfirst1}). Fortunately the solution has already been provided by Khovanov and Rozansky: up to a sign, it is the polynomial $a_2 = \frac{1}{2}u_2 + (u_1 + y_1 u_2 - w_2)/(x_1-y_1)$ of \cite[Section~6]{kr0401268} (where we set the parameter~$\lambda$ of \cite{kr0401268} equal to~$\frac{1}{2}$).

\begin{lemma} $\Omega_2(\gamma) = -a_2$.
\end{lemma}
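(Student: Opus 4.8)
The plan is to verify directly that the polynomial $-a_2$ solves the partial differential equation~\eqref{eq:consfirst1}; by the uniqueness assertion of Lemma~\ref{lemma:consfirst} this immediately forces $\Omega_2(\gamma) = -a_2$. Writing $L$ for the operator $L(z) = 2z + (y_1-x_1)\,\partial_{y_1}z + (y_2-x_2)\,\partial_{y_2}z$ on the left of~\eqref{eq:consfirst1}, and keeping the notation $t_i = y_i - x_i$, $s_1, s_2$, $w_i$, $u_i$ of Section~\ref{section:stabilisation}, the claim is precisely $L(a_2) = -\gamma$.

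First I would pin down $a_2$ in a form convenient for differentiation, which also makes its polynomiality transparent. Using the elementary identities $s_1 = t_1 + t_2$ and $s_2 = y_1 t_2 + x_2 t_1$ one rewrites~\eqref{eq:cons2} as $W = (u_1 + x_2 u_2)\,t_1 + (u_1 + y_1 u_2)\,t_2$; comparing with $W = w_1 t_1 + w_2 t_2$ from~\eqref{eq:cons1} gives $\bigl[(u_1 + x_2 u_2) - w_1\bigr]t_1 + \bigl[(u_1 + y_1 u_2) - w_2\bigr]t_2 = 0$ in $R$. Since $(t_1, t_2)$ is a regular sequence in $R$, there is a unique polynomial $Q$ with $u_1 + y_1 u_2 - w_2 = t_1 Q$ (and, as a by-product, $u_1 + x_2 u_2 - w_1 = -t_2 Q$), so that $a_2 = \tfrac12 u_2 - Q$.

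The core of the argument is then a direct computation, which I would organise so as to clear the denominator at the outset: differentiate $t_1 Q = u_1 + y_1 u_2 - w_2$ with respect to $y_1$ and $y_2$ (noting $\partial_{y_1}w_2 = \partial_{y_2}w_1 = 0$) to get $t_1\partial_{y_1}Q + Q = \partial_{y_1}u_1 + u_2 + y_1\partial_{y_1}u_2$ and $t_1\partial_{y_2}Q = \partial_{y_2}u_1 + y_1\partial_{y_2}u_2 - \partial_{y_2}w_2$, and differentiate $W = u_1 s_1 + u_2 s_2 = w_1 t_1 + w_2 t_2$ with respect to $y_2$ to get $s_1\partial_{y_2}u_1 + u_1 + s_2\partial_{y_2}u_2 + y_1 u_2 = t_2\partial_{y_2}w_2 + w_2$. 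Substituting $a_2 = \tfrac12 u_2 - Q$ into $t_1\bigl(L(a_2) + \gamma\bigr)$, feeding in these three relations, and using $t_1 Q = u_1 + y_1 u_2 - w_2$ repeatedly, everything cancels except one term proportional to $\partial_{y_2}u_2$ whose coefficient is $s_2 - \tfrac12\bigl[t_1(x_2 + y_2) + t_2(x_1 + y_1)\bigr]$; this vanishes by the elementary identity $2 s_2 = t_1(x_2 + y_2) + t_2(x_1 + y_1)$. Hence $t_1\bigl(L(a_2) + \gamma\bigr) = 0$, and since $R$ is a domain and $t_1 \neq 0$ we conclude $L(a_2) = -\gamma$.

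There is no conceptual difficulty here, and no step is a serious obstacle; the computation is what it is. The one genuinely non-obvious ingredient is the regular-sequence step that manufactures $Q$ and exhibits $a_2$ as a polynomial, and the only thing needing care is bookkeeping — in particular clearing the factor $t_1$ before comparing with $\gamma$, and tracking which partial derivatives of $w_1, w_2$ vanish. (It is perhaps worth remarking that this derivation uses neither the symmetry $u_i(y_1,y_2,x_1,x_2) = u_i(x_1,x_2,y_1,y_2)$ of the $u_i$ nor the explicit shape of the $w_i$ beyond $W = w_1 t_1 + w_2 t_2$.) An alternative route is simply to read the PDE off the formulas of~\cite[Section~6]{kr0401268} with the parameter $\lambda$ set to $\tfrac12$; the normalisation and sign are then fixed by matching $L(a_2) = -\gamma$ against~\eqref{eq:consfirst1}.
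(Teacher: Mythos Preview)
Your proposal is correct and follows essentially the same route as the paper: verify that $-a_2$ is a polynomial solution of~\eqref{eq:consfirst1} and invoke uniqueness, the key input being the $y_2$-derivative of~\eqref{eq:cons2} to eliminate $u_1 + y_1 u_2$. Your write-up is more explicit than the paper's terse sketch (and you establish polynomiality via a regular-sequence argument rather than by specialising $y_1 = x_1$), but the substance is the same.
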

\begin{proof}
Setting $y_1 = x_1$ in (\ref{eq:cons2}) shows that $u_1 + y_1 u_2 - w_2$ is divisible by $x_1 - y_1$. We must show it is a solution of (\ref{eq:consfirst1}). All we know of the polynomials $u_i$ is that they satisfy (\ref{eq:cons2}), and indeed one shows that $-a_2$ is a solution of (\ref{eq:consfirst1}) by direct substitution using the $y_2$-derivative of (\ref{eq:cons2}) to obtain an expression for $u_1 + y_1 u_2$.
\end{proof}

The $\chi_1$ given above agrees with the definition given in \cite{kr0401268} for $\lambda = \frac{1}{2}$, up to a sign which arises because they write their differentials and maps in the basis $\{1, \theta_2 \theta_1 = - \theta_1 \theta_2, \theta_1, \theta_2\}$.

\medskip

Next we discuss the morphism $\chi_0$ stabilising $\eta'$. Recall that $\eta': S \lto S \otimes_A S\{-2\}$ corresponds to the map $R/(\bs{t}) \lto R/(\bs{s})\{-2\}$ given by multiplication with $\frac{1}{2}(x_1 + y_1 - x_2 - y_2)$. So by definition $\chi_0$ is the unique (up to homotopy) morphism making the following diagram commute:
\begin{equation}\label{eq:cons50}
\xymatrix@C+2pc{
\Xcirc \ar[d]_{\chi_0} \ar[r]^{\pi_{\circ}} & R/(\boldsymbol{t}) \ar[d]^-{\frac{1}{2}(x_1+y_1-x_2-y_2)}\\
\Xbul\{-2\} \ar[r]_{\pi_{\bullet}} & R/(\boldsymbol{s})\{-2\} \, .
}
\end{equation}
Rather than use perturbation theory, which is much more complicated in this case, we obtain $\chi_0$ by dualising $\chi_1$ to get a morphism $(\chi_1)\mdual: \Xcirc\mdual \lto \Xbul\mdual$. Using the standard properties of cyclic Koszul complexes collected in Appendix \ref{appendix:graded_mfs}, we can rewrite the dual of a cyclic Koszul complex as a cyclic Koszul complex; up to signs, grading shifts, and a change of variables $\Xcirc$ and $\Xbul$ are self-dual, so we get a morphism $\Xcirc \lto \Xbul$. This will turn out to be the desired map $\chi_0$.

More carefully, we define $\chi_0'$ to be the morphism making the diagram
\begin{equation}\label{eq:chi1dualischi0}
\xymatrix@C+3pc{
\{ \bs{w}, \bs{t} \}\mdual = \Xcirc\mdual \ar[r]^-{\chi_1\mdual} & \Xbul\mdual = \{ \bs{u}, \bs{s} \}\mdual \ar[d]^-{(\ref{lemma:cyclickos1})}\\
\{ -\bs{t}, \bs{w} \} \ar[u]^-{(\ref{lemma:cyclickos1})} & \{ -\bs{s}, \bs{u} \} \ar[d]^-{(\ref{lemma:cyclickos2})}\\
\{ -\bs{w}, \bs{t} \}\{ 2N - 2 \} \ar[u]^-{(\ref{lemma:cyclickos2})} & \{ -\bs{u}, \bs{s} \} \{ 2N - 4 \} \ar[d]^-{(\ref{lemma:cyclickos3})}\\
\{ \bs{w}, -\bs{t} \}\{ 2N - 2 \} \ar@{-->}[r]_-{\chi_0'} \ar[u]^-{(\ref{lemma:cyclickos3})} & \{ \bs{u}, -\bs{s} \}\{ 2N - 4 \}
}
\end{equation}
commute, where each vertical map is an isomorphism labelled by the corresponding lemma in Appendix~\ref{appendix:graded_mfs}. The matrix factorisations $\{ \bs{w}, -\bs{t} \}$ and $\{ \bs{u}, -\bs{s} \}$ are what we would assign to the diagrams~\eqref{MFcrossings} with the orientation reversed. Switching the $x$'s and $y$'s will therefore give us a map between $\Xbul$ and $\Xcirc$.

Let $\phi: R \lto R$ be the $\nQ$-automorphism with $\phi(x_i) = y_i$ and $\phi(y_i) = x_i$. It is clear that $\phi$ sends $t_i$ to $-t_i$, $s_i$ to $-s_i$ and $W$ to $-W$, so that by pulling back along $\phi$ (equivalently, applying $\phi$ to the entries of all matrices) we get a morphism of matrix factorisations of $W$: 
\[
\xymatrix{
\Xcirc = \{ \phi\bs{w}, -\phi \bs{t} \} = \phi^*\{ \bs{w}, -\bs{t} \} \ar[rr]^-{\phi^* \chi'_0} & & \phi^*\{ \bs{u}, -\bs{s} \}\{-2\} = \{ \phi \bs{u}, -\phi \bs{s} \}\{-2\} = \Xbul\{-2\} \, ,
}
\]
and we define $\chi_0$ to be this map $\phi^* \chi'_0$. Here we use that we have chosen the $u_i$ such that $\phi(u_i) = u_i$. If one is careful with signs, it is straightforward to deduce from the above the explicit form of $\chi_0$:
\begin{align}
\chi_0(1) &= \frac{1}{2}(x_1 + y_1 - x_2 - y_2) \cdot 1 + a_2 \cdot \theta_1 \theta_2 \, ,\nonumber \\
\chi_0(\theta_1) &= -\theta_2 + \frac{1}{2}(x_1 + y_1) \cdot \theta_1\, ,\nonumber \\
\chi_0(\theta_2) &= \theta_2 - \frac{1}{2}(x_2 + y_2) \cdot \theta_1\, , \nonumber \\
\chi_0(\theta_1\theta_2) &= \theta_1\theta_2\,, \label{eq:chi0defn}
\end{align}
where $a_2 = \frac{1}{2}u_2 + (u_1 + y_1 u_2 - w_2)/(x_1-y_1)$ is as before, and we have used the fact that $\phi(a_2) = -a_2$. Again, keeping in mind the change of basis discussed above, this agrees with the explicit matrices for $\chi_0$ given in \cite[Section~6]{kr0401268} with their parameter~$\mu$ set to~$\frac{1}{2}$. So far we have just produced a morphism, but it is clear from the explicit form given above that this morphism actually stabilises the map $\eta'$, completing our stabilisation of the short complexes (\ref{eq:bimod_cpx1}) and (\ref{eq:bimod_cpx2}).

\begin{lemma} The map $\chi_0: \Xcirc \lto \Xbul\{-2\}$ of (\ref{eq:chi0defn}) is a morphism of graded matrix factorisations making (\ref{eq:cons50}) commute.
\end{lemma}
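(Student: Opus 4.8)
The plan is to prove the two assertions in turn: that $\chi_0$ is a degree-zero morphism of graded matrix factorisations, and that the square \eqref{eq:cons50} commutes (in fact on the nose, not merely up to homotopy). For the first assertion I would not compute directly but rather read it off the construction just given. By Lemma~\ref{lemma:consfirst} the map $\chi_1\colon \Xbul \lto \Xcirc$ is a morphism of graded matrix factorisations, hence so is $\chi_1\mdual$; every vertical arrow of \eqref{eq:chi1dualischi0} is an isomorphism of graded matrix factorisations (the standard identities for duals of cyclic Koszul complexes collected in Appendix~\ref{appendix:graded_mfs}), so the composite $\chi_0'$ is a morphism, and finally $\chi_0 = \phi^*\chi_0'$ is a morphism because pullback along the $\nQ$-algebra automorphism $\phi$ takes morphisms to morphisms. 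The one point to spell out is that $\phi^*$ really lands in $\Hom(\Xcirc,\Xbul\{-2\})$: this uses $\phi(t_i)=-t_i$, $\phi(s_i)=-s_i$, together with $\phi(w_i)=w_i$ (immediate from $w_i=(y_i^{N+1}-x_i^{N+1})/(y_i-x_i)$) and our standing assumption $\phi(u_i)=u_i$, so that $\phi^*\{\bs{w},-\bs{t}\}=\Xcirc$ and $\phi^*\{\bs{u},-\bs{s}\}=\Xbul$ with the grading shifts in \eqref{eq:chi1dualischi0} falling into place.

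Next I would obtain the explicit formulas \eqref{eq:chi0defn} by chasing the formula for $\chi_1$ from Lemma~\ref{lemma:consfirst} down the left column of \eqref{eq:chi1dualischi0} and applying $\phi^*$, using $\phi(a_2)=-a_2$ and the fact that $\phi$ fixes $\tfrac12(x_1+y_1-x_2-y_2)$. Alternatively — and this is the cleanest self-contained route — one simply takes \eqref{eq:chi0defn} as a definition and verifies $\chi_0\circ d_{\Xcirc}=d_{\Xbul}\circ\chi_0$ directly on the four basis vectors $1,\theta_1,\theta_2,\theta_1\theta_2$, using the Koszul differentials of $\Xcirc$ and $\Xbul$ from \eqref{XbulXcirc}. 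The identities needed are: the elementary $s_1=t_1+t_2$ and $2s_2=(x_2+y_2)t_1+(x_1+y_1)t_2$; the factorisation identities $W=w_1t_1+w_2t_2=u_1s_1+u_2s_2$; and the defining relation for $a_2$, equivalently $w_2=u_1+\tfrac12(x_1+y_1)u_2+t_1a_2$. For instance on $\theta_1$ the $\theta_1\theta_2$-component of the chain-map equation is exactly this last identity and its scalar component reduces to $2s_2=(x_2+y_2)t_1+(x_1+y_1)t_2$; on $\theta_1\theta_2$ the check is immediate; on $1$ and $\theta_2$ one uses in addition the second factorisation identity together with the $\phi$-symmetry of the $u_i$. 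One also checks that each displayed entry is homogeneous of the correct bidegree, so that $\chi_0$ has bidegree $(0,0)$.

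For the commutativity of \eqref{eq:cons50} I would use that $\pi_\circ$ and $\pi_\bullet$ are, by construction, projection of $\bigwedge F$ onto its exterior degree-zero part $\bigwedge^0 F=R$ followed by reduction modulo $(\bs{t})$, resp.\ $(\bs{s})$. Applying $\pi_\bullet$ to \eqref{eq:chi0defn} kills all terms involving $\theta_1,\theta_2,\theta_1\theta_2$ and sends $1\mapsto\tfrac12(x_1+y_1-x_2-y_2)\bmod(\bs{s})$, while $\tfrac12(x_1+y_1-x_2-y_2)\cdot\pi_\circ$ gives the same value on $1$ and $0$ on the other three basis vectors, so the square commutes strictly; in particular $\chi_0$ stabilises $\eta'$ in the sense defined before the lemma. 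I would also note that the right-hand vertical map is well defined because $\tfrac12(x_1+y_1-x_2-y_2)$ is the determinant of the matrix $A$ with $\bs{s}=A\bs{t}$, so by Cramer's rule multiplication by it carries $(\bs{t})$ into $(\bs{s})$; this is precisely the right-hand map of \eqref{eq:bimodalg2}.

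I expect the only genuine friction to be bookkeeping: either tracking signs and grading shifts through the dualisation chain \eqref{eq:chi1dualischi0} composed with $\phi^*$ (in the conceptual route), or, in the direct route, the $\theta_1$- and $\theta_2$-components of the chain-map equation, which is where the defining property of $a_2$ from Lemma~\ref{lemma:consfirst} (the $y_i$-derivatives of \eqref{eq:cons2}) must be fed in. Everything else is routine.
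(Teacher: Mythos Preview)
Your proposal is correct and matches the paper's (tacit) approach: the paper states the lemma without proof, having just observed that $\chi_0$ is a morphism by construction (as a dual of $\chi_1$ composed with isomorphisms and pulled back along $\phi$) and that ``it is clear from the explicit form'' that it stabilises $\eta'$. You have simply filled in those details --- the conceptual route for the morphism claim and the direct evaluation of $\pi_\bullet\circ\chi_0$ on the basis for strict commutativity of \eqref{eq:cons50} --- and your auxiliary identities ($s_1=t_1+t_2$, $2s_2=(x_2+y_2)t_1+(x_1+y_1)t_2$, and $w_2=u_1+\tfrac12(x_1+y_1)u_2+t_1 a_2$) are all correct.
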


\section{Properties of graded matrix factorisations}\label{appendix:graded_mfs}

Let $R = \bigoplus_{i \ge 0} R_i$ be a graded ring. If $X$ is a finite-rank graded matrix factorisation of $W \in R_{2c}$ over $R$ then the \textsl{dual factorisation} is $X\mdual = \Hom_R(X,R)$, which factorises $-W$. More explicitly, the dual factorisation is the pair $(-(d^1_X)^*, (d^0_X)^*)$.
%\[
%\xymatrix@C+3pc{
%\Hom_R(X^0, R) \ar[r]^-{-\Hom(1, d^1_X)} & \Hom_R(X^1,R) \ar[r]^-{\Hom(1,d^0_X)} & \Hom_R(X^0, R) \, .
%}
%\]

\begin{remark}\label{remark:dual_koszul_cyclic} If $a,b \in R$ are homogeneous with $\deg(a) + \deg(b) = 2c$ then it is clear that there is an isomorphism $\{ b, a \}\mdual \cong \{-a, b\}$, and if we think of these cyclic Koszul factorisations as exterior algebras on a symbol $\theta$, this isomorphism sends the basis element $1^*$ to $1$ and $\theta^*$ to $\theta$.
\end{remark}

We will need the following basic facts relating the graded tensor and Hom. The proofs are easy (the standard isomorphisms for graded modules commute with the differentials) and we omit them.

\begin{lemma}\label{lemma:homdual} Given $W, W' \in R_{2c}$ and a finite-rank graded matrix factorisation~$X$ of~$W$ and a graded linear factorisation~$Y$ of~$W'$, there is a natural isomorphism
\[
\xi: X\mdual \otimes Y \lto \Hom_R(X,Y)
\]
of graded linear factorisations of $W' - W$, defined for $\Ztwo$-homogeneous elements $\nu \in X\mdual, y \in Y$ and $x \in X$ by $\xi( \nu \otimes y )(x) = (-1)^{|y||\nu|} \nu(x) \cdot y$.
\end{lemma}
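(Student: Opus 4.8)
The plan is to verify, in order, that $\xi$ is a well-defined $R$-linear map, that it has bidegree $(0,0)$ and is bijective, and finally — the only point with any real content — that it intertwines the two differentials. Throughout I would use that the differential of the dual factorisation $X\mdual = \Hom_R(X,R)$, given by the pair $(-(d_X^1)^*, (d_X^0)^*)$ recorded at the start of this appendix, is
\[
d_{X\mdual}(\nu) = -(-1)^{|\nu|}\,\nu\circ d_X
\]
on a $\Ztwo$-homogeneous $\nu$, that $X\mdual$ factorises $-W$, and hence that both $X\mdual\otimes Y$ and $\Hom_R(X,Y)$ are graded linear factorisations of $-W + W' = W' - W$, as required.

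First I would check the elementary points: $\xi(\nu\otimes y)$ is $R$-linear in $x$ (immediate, $\nu$ being $R$-linear and $R$ commutative), the assignment $(\nu,y)\mapsto\xi(\nu\otimes y)$ is $R$-balanced (so $\xi$ descends to the tensor product over $R$) and additive. For the bidegree: if $\nu\in(X\mdual)^p$ has internal degree $a$ and $y\in Y^q$ has internal degree $b$, then $\xi(\nu\otimes y)$ vanishes on $X^{1-p}$ and carries $X^p_j$ into $Y^q_{j+a+b}$, so it lies in $\Hom_R(X,Y)$ in $\Ztwo$-degree $q-p\equiv p+q$ and internal degree $a+b$; hence $\xi$ has bidegree $(0,0)$. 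To see $\xi$ is an isomorphism, note that on the summand $(X\mdual)^p\otimes Y^q$ it differs from the untwisted evaluation map $\Theta(\nu\otimes y)=(x\mapsto\nu(x)y)$ only by the invertible scalar $(-1)^{pq}$; and $\Theta$ is an isomorphism because $X$ is finite-rank free — a homogeneous basis $\{e_i\}$ of $X$ with dual basis $\{e_i^*\}$ identifies both $X\mdual\otimes_R Y$ and $\Hom_R(X,Y)$ with $\bigoplus_i Y$, with matching grading shifts, compatibly with $\Theta$.

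The substantive step is compatibility with differentials, i.e.\ $\xi\circ(d_{X\mdual}\otimes 1 + 1\otimes d_Y) = d_{\Hom}\circ\xi$, where $d_{\Hom}(\alpha)=d_Y\circ\alpha-(-1)^{|\alpha|}\alpha\circ d_X$ and the source differential carries the Koszul sign $(d_{X\mdual}\otimes 1+1\otimes d_Y)(\nu\otimes y)=d_{X\mdual}(\nu)\otimes y+(-1)^{|\nu|}\nu\otimes d_Y(y)$. I would evaluate both sides on a $\Ztwo$-homogeneous $\nu\otimes y$ at a point $x\in X$ and substitute the displayed formula for $d_{X\mdual}$; each side then becomes a sum of a term proportional to $\nu(x)\,d_Y(y)$ and a term proportional to $\nu(d_X x)\,y$, with accumulated signs $(-1)^{|\nu||y|}$ on the first and $-(-1)^{|\nu||y|+|\nu|+|y|}$ on the second, in both cases. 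I expect this Koszul-sign bookkeeping to be the only place where care is needed; it is elementary but easy to get wrong, and it is exactly what forces the sign $(-1)^{|y||\nu|}$ into the definition of $\xi$.

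Finally, naturality in $X$ (contravariant) and in $Y$ (covariant) is formal: $\xi$ is assembled from the evaluation pairing and multiplication in $Y$, both natural, so for $f\colon X'\to X$ the square relating $\xi_{X,Y}$ and $\xi_{X',Y}$ (left vertical $f\mdual\otimes 1$, right vertical $f^{*}$) commutes, and likewise for $g\colon Y\to Y'$ the square with verticals $1\otimes g$ and $g_{*}$ commutes; each reduces to cancelling the same sign on both sides.
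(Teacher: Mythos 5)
Your proof is correct. The paper in fact omits any proof of this lemma, remarking only that ``the proofs are easy (the standard isomorphisms for graded modules commute with the differentials) and we omit them,'' so there is no alternative argument to compare against; your write-up simply supplies the details that the authors suppress. The one step with any real content---the Koszul sign bookkeeping in the compatibility with differentials---checks out: substituting $d_{X\mdual}(\nu)=-(-1)^{|\nu|}\nu\circ d_X$ and carrying the $(-1)^{|\nu|}$ from the Koszul rule for $1\otimes d_Y$ through the definition of $\xi$, both sides reduce to $(-1)^{|\nu||y|}\nu(x)\,d_Y(y)-(-1)^{|\nu||y|+|\nu|+|y|}\nu(d_Xx)\,y$, exactly as you claim.
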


\begin{lemma} Given $X,Y,Z$ which are graded linear factorisations of $W, W', W'' \in R_{2c}$, respectively, there is a natural isomorphism of graded linear factorisations of $W'' - W' - W$
\[
\Hom_{\mathrm{gr}}(X \otimes Y, Z) \lto \Hom_{\mathrm{gr}}(X, \Hom_{\mathrm{gr}}(Y,Z)) \, .
\]
\end{lemma}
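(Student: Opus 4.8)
The plan is to reduce the statement to the ordinary tensor--hom adjunction for graded $R$-modules and then to check compatibility with the $\Ztwo$-grading and with the differentials. First I would recall that on underlying graded $R$-modules there is a natural isomorphism
\[
\Theta\colon \Hom_R(X \otimes_R Y, Z) \lto \Hom_R\big(X, \Hom_R(Y,Z)\big)\,, \qquad \Theta(f)(x)(y) = (-1)^{|x|\,|y|}\, f(x \otimes y)\,,
\]
for $\Ztwo$-homogeneous $x,y$, extended by $R$-bilinearity; the Koszul sign is the one forced by demanding that $\Theta$ preserve total bidegree. Tracking internal degrees shows that if $f$ is homogeneous then $\Theta(f)(x)$ is a homogeneous map $Y \to Z$, so $\Theta$ carries $\Hom_{\mathrm{gr}}(X \otimes Y, Z)$ into $\Hom_{\mathrm{gr}}(X, \Hom_{\mathrm{gr}}(Y,Z))$, and the obvious inverse formula $g \mapsto \big(x \otimes y \mapsto (-1)^{|x|\,|y|} g(x)(y)\big)$ exhibits it as a bijection of $(\ZZ \times \Ztwo)$-graded $R$-modules of bidegree $(0,0)$.

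The one substantive step is to check that $\Theta$ intertwines the two differentials: on the left, $f \mapsto d_Z\circ f - (-1)^{|f|} f\circ(d_X\otimes 1 + 1\otimes d_Y)$, and on the right, $\alpha \mapsto d_{\Hom_{\mathrm{gr}}(Y,Z)}\circ\alpha - (-1)^{|\alpha|}\alpha\circ d_X$ with $d_{\Hom_{\mathrm{gr}}(Y,Z)}(\beta) = d_Z\circ\beta - (-1)^{|\beta|}\beta\circ d_Y$. I would expand $\Theta(d(f))(x)(y)$ and match it term by term against the right-hand differential applied to $\Theta(f)$: the $d_Z\circ f$ piece gives the $d_Z$-contribution, the $f\circ(1\otimes d_Y)$ piece gives the $(\cdot)\circ d_Y$ contribution inside $\Hom_{\mathrm{gr}}(Y,Z)$ — the sign produced when $d_Y$ is commuted past the homogeneous element $x$ inside $1\otimes d_Y$ is exactly the Koszul sign appearing in $d_{\Hom_{\mathrm{gr}}(Y,Z)}$ — and the $f\circ(d_X\otimes 1)$ piece gives the $(\cdot)\circ d_X$ contribution. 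Once $\Theta$ is a bijective morphism of graded linear factorisations it is an isomorphism, and since $d^2$ equals multiplication by $W'' - W' - W$ on both sides automatically, both are genuine linear factorisations of that potential. Naturality in $X$, $Y$, $Z$ then comes for free from the module-level adjunction, since morphisms of linear factorisations are in particular $R$-linear maps and $\Theta$ is given by a formula uniform in them.

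I expect the hard part to be purely bookkeeping: pinning down a single Koszul-sign convention, consistent with the paper's conventions for $d_{X\otimes Y}$ and for the $\Hom$-differential (where ``the notation implicitly involves Koszul signs''), so that $\Theta$ is simultaneously of bidegree $(0,0)$ and differential-compatible; this is a one-time sign check that I would perform and then suppress. There is no deeper input — in particular, unlike Lemma~\ref{lemma:homdual}, no finiteness or freeness hypothesis on $X$, $Y$, $Z$ is needed, because one never has to identify a $\Hom$ with a tensor product: the classical adjunction isomorphism already works for arbitrary graded modules.
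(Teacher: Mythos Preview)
Your proposal is correct and matches the paper's approach exactly: the paper simply states that ``the proofs are easy (the standard isomorphisms for graded modules commute with the differentials) and we omit them.'' You have spelled out precisely that omitted argument --- the module-level adjunction plus a Koszul-sign check that it intertwines the $\Hom$-differentials --- so there is nothing to compare.
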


In particular if $X, Y$ are finite-rank graded matrix factorisations we have
\begin{align}
(X \otimes Y)\mdual &= \Hom_R(X \otimes Y, R) 
\cong \Hom_R(X, \Hom_R(Y,R)) \nonumber \\
&\cong \Hom_R(X,R) \otimes \Hom_R(Y,R) = X\mdual \otimes Y\mdual \, . \label{eq:dual_tensor}
\end{align}
We also note that the isomorphism $X\langle n \rangle \otimes Y \cong (X \otimes Y)\langle n \rangle$ involves no signs, but the isomorphism $X \otimes (Y \langle n \rangle) \cong (X \otimes Y)\langle n \rangle$ involves a sign: $x \otimes y \longmapsto (-1)^{n|x|} x \otimes y$ for $\Ztwo$-homogeneous $x$. Grading shifts $\{ m \}$ can be pulled out of either component of a tensor product.
\\

In the following let $R = \bigoplus_{i \ge 0} R_i$ be a graded ring, and $\bs{a}, \bs{b}$ sequences of $n$ homogeneous elements in $R$ with $\deg(a_i) + \deg(b_i) = 2c$. If $\bs{a} = (a_1,\ldots,a_n)$ then $- \bs{a}$ denotes $(-a_1, \ldots, -a_n)$. In describing maps between cyclic Koszul complexes we use the symbols $\theta_i$ introduced in Section \ref{prelim:cyclic_koszul}.

\begin{lemma}\label{lemma:cyclickos1} There is a canonical isomorphism of graded matrix factorisations $\{ \bs{a}, \bs{b} \}\mdual \cong \{ -\bs{b}, \bs{a} \}$.
\end{lemma}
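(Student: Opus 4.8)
The plan is to reduce to the one-variable case, which is exactly Remark~\ref{remark:dual_koszul_cyclic}, and then transport the isomorphism through the tensor product defining $\{\bs a, \bs b\}$ by means of \eqref{eq:dual_tensor}. First I would recall from Section~\ref{prelim:cyclic_koszul} that $\{\bs a, \bs b\} = \{a_1,b_1\} \otimes \cdots \otimes \{a_n, b_n\}$ and, since the degree hypothesis is symmetric, $\deg(-b_i) + \deg(a_i) = \deg(a_i) + \deg(b_i) = 2c$, so that $\{-\bs b, \bs a\} = \{-b_1, a_1\} \otimes \cdots \otimes \{-b_n, a_n\}$ is a well-defined cyclic Koszul factorisation of $\sum_i (-b_i) a_i = -\sum_i a_i b_i$, which is precisely the potential factorised by the dual $\{\bs a, \bs b\}\mdual$.

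Next I would apply the iterated form of \eqref{eq:dual_tensor} — obtained by induction on $n$ from Lemma~\ref{lemma:homdual} and the adjunction isomorphism $\Hom_{\mathrm{gr}}(X \otimes Y, Z) \cong \Hom_{\mathrm{gr}}(X, \Hom_{\mathrm{gr}}(Y,Z))$, all tensor factors here being finite-rank free — to get a natural isomorphism of graded matrix factorisations
\[
\{\bs a, \bs b\}\mdual \;\cong\; \{a_1,b_1\}\mdual \otimes \cdots \otimes \{a_n,b_n\}\mdual \, .
\]
Then I would apply Remark~\ref{remark:dual_koszul_cyclic} to each tensor factor, which supplies canonical isomorphisms $\{a_i, b_i\}\mdual \cong \{-b_i, a_i\}$ sending $1^* \mapsto 1$ and $\theta_i^* \mapsto \theta_i$, and tensor these together to land on $\{-\bs b, \bs a\}$. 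Composing all of this gives the claimed isomorphism, and it is canonical because each building block is.

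The only point requiring genuine care — and the place where the bookkeeping is least transparent — is the matching of the internal grading shifts and the Koszul signs. In $\{a_i, b_i\}$ the middle term carries the shift $\{c - \deg(a_i)\}$, whereas in $\{-b_i, a_i\}$ it carries $\{c - \deg(b_i)\}$; dualising turns the first into the second precisely because $\deg(a_i) + \deg(b_i) = 2c$, and the sign convention $(-(d^1)^*, (d^0)^*)$ for the differential of the dual is exactly what reproduces the differential of $\{-b_i, a_i\}$, which is the content of Remark~\ref{remark:dual_koszul_cyclic}. For the tensor product the Koszul signs entering \eqref{eq:dual_tensor} are the standard ones, so the composite is strictly compatible with the differentials; if needed downstream (e.g.\ for the commuting square \eqref{eq:chi1dualischi0}), I would record the induced map on the exterior-algebra basis explicitly, namely $\theta_{i_1}^* \wedge \cdots \wedge \theta_{i_k}^* \mapsto \pm\, \theta_{i_1} \wedge \cdots \wedge \theta_{i_k}$ with the sign dictated by the Koszul rule, but for the statement as given this level of detail is unnecessary. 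I do not anticipate any essential difficulty beyond this sign-and-shift verification.
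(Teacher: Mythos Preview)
Your proposal is correct and follows exactly the approach the paper intends: the paper's one-line proof ``Clear from Remark~\ref{remark:dual_koszul_cyclic}'' is shorthand for precisely the argument you wrote out, namely distributing the dual over the tensor product via \eqref{eq:dual_tensor} and then applying the one-variable isomorphism of Remark~\ref{remark:dual_koszul_cyclic} to each factor. Your additional care with the grading shifts and Koszul signs is appropriate and matches the paper's conventions.
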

\begin{proof}
Clear from Remark \ref{remark:dual_koszul_cyclic}.
\end{proof}

\begin{lemma}\label{lemma:cyclickos2} There is a canonical isomorphism of graded matrix factorisations
\begin{equation}
\label{abba}
\{ - \bs{b}, \bs{a} \} \cong \{ -\bs{a}, \bs{b} \}\langle n \rangle \big\{ \sum_i \deg(a_i) - nc \big\} \, .
\end{equation}
\end{lemma}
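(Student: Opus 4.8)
The plan is to prove the isomorphism \eqref{abba} by writing both sides explicitly in the exterior-algebra model of Section~\ref{prelim:cyclic_koszul} and then writing down the isomorphism on generators, checking it commutes with the two halves $\delta_\pm$ of the differential and has the asserted bidegree. First I would recall that $\{-\bs{b},\bs{a}\}$ has underlying $\Ztwo$-graded module $\bigwedge F$ with $F = \bigoplus_i R\theta_i$, where now $\theta_i$ is the generator attached to the pair $(-b_i, a_i)$, so $\theta_i$ has bidegree $(-1, \deg(-b_i) - c) = (-1, \deg(b_i) - c)$, and the differential is $\delta = \delta_+ + \delta_-$ with $\delta_+ = (\sum_i a_i\theta_i^*)\neg(-)$ and $\delta_- = (\sum_i (-b_i)\theta_i)\wedge(-)$. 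On the other side, $\{-\bs{a},\bs{b}\}$ has generators $\theta_i'$ of bidegree $(-1,\deg(a_i) - c)$, differential with $\delta_+' = (\sum_i b_i(\theta_i')^*)\neg(-)$ and $\delta_-' = (\sum_i(-a_i)\theta_i')\wedge(-)$; applying the suspension $\langle n\rangle$ and the grading shift $\{\sum_i\deg(a_i) - nc\}$ adjusts cohomological and internal degrees.

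The key step is to guess the right map. The natural candidate is the ``Hodge star'' / contraction-against-top-form operator: fixing the ordering $1,\ldots,n$, send a monomial $\theta_{i_1}\cdots\theta_{i_k}$ (with $i_1 < \cdots < i_k$) to $\pm\,\theta'_{j_1}\cdots\theta'_{j_{n-k}}$ where $\{j_1 < \cdots < j_{n-k}\}$ is the complementary index set and the sign is the signature of the shuffle, i.e. contraction of $\theta'_1\wedge\cdots\wedge\theta'_n$ against the corresponding element of $\bigwedge F^*$. Under this map $\wedge\theta_i$ on the source becomes $\pm\,\theta_i^*\neg$ on the target and $\theta_i^*\neg$ becomes $\pm\,\wedge\theta_i'$, so $\delta_- = \sum_i(-b_i)\theta_i\wedge$ is carried to $\sum_i(-b_i)(\theta_i')^*\neg$ up to sign, which (after fixing signs correctly) is $-\delta_+'$ or $\delta_+'$ as required, and symmetrically $\delta_+$ is carried to $\delta_-'$. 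I would verify that with the Koszul sign conventions of Section~\ref{prelim:cyclic_koszul} the signs work out so that $\delta'$ is intertwined \emph{on the nose} (not just up to sign), which is where the $-\bs{b}$ versus $\bs{b}$ and $-\bs{a}$ versus $\bs{a}$ bookkeeping pays off: the minus signs are exactly what is needed to absorb the $(-1)$'s produced by moving contractions past wedges.

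Next I would check the bidegree. The image of $1 \in \bigwedge^0 F$ is $\theta'_1\cdots\theta'_n$, which sits in cohomological degree $-n$ and internal degree $\sum_i(\deg(a_i) - c) = \sum_i\deg(a_i) - nc$; so to make the map have bidegree $(0,0)$ we must apply $\langle n\rangle$ (shifting cohomological degree by $n$) and $\{\sum_i\deg(a_i) - nc\}$ to the target, which is precisely the right-hand side of \eqref{abba}. One then checks this accounting is consistent across all monomials: a degree-$k$ monomial on the left goes to a degree-$(n-k)$ monomial on the right, and $(n-k) + n \equiv k \pmod 2$ matches the $\Ztwo$-folding, while the internal degrees match by the defining relation $\deg(a_i) + \deg(b_i) = 2c$. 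Finally, the map is visibly bijective (complementation of index sets is an involution up to sign), so it is an isomorphism. I expect the main obstacle to be purely bookkeeping: getting every Koszul sign and every shuffle sign to cancel correctly so that the diagram commutes strictly rather than up to a global sign — this is routine but error-prone, and is the only place real care is needed.
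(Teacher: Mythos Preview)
Your proposal is correct, and in fact constructs the \emph{same} isomorphism as the paper, but by a different route. The paper's proof is more modular: it first establishes the $n=1$ case by inspection, observing directly that $\{-a_i,b_i\}\langle 1\rangle\{\deg(a_i)-c\} = \{-b_i,a_i\}$ (the ``Hodge star'' in rank one is just the swap $1\leftrightarrow\theta$, and the suspension supplies the sign), and then tensors these rank-one isomorphisms together using the definition $\{\bs{a},\bs{b}\} = \bigotimes_i\{a_i,b_i\}$ and the standard rules for commuting $\langle\,\cdot\,\rangle$ and $\{\,\cdot\,\}$ past tensor factors. Your approach instead writes down the full Hodge star $\theta_I\mapsto\pm\theta'_{I^c}$ on $\bigwedge F$ at once and checks compatibility with $\delta_\pm$ and the bidegree globally.

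What each buys: the paper's factored argument makes the sign bookkeeping essentially trivial, since in rank one there is nothing to check and the Koszul rules for tensoring suspended factorisations handle the rest automatically. Your global approach is more direct and makes the geometry of the map (complementation of index sets) transparent, but as you correctly anticipate, the price is that all the shuffle signs and the signs introduced by $\langle n\rangle$ on the differential must be tracked by hand. Your internal-degree computation is right (and uniform across all monomials, as you note), so the only remaining work in your version is exactly the routine sign audit you flag at the end.
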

\begin{proof}
Let us begin with $a,b$ homogeneous such that $\deg(a) + \deg(b) = 2c$. Then $\{ b, a \}\langle 1 \rangle$ is
\[
\xymatrix{
R\{ \deg(a) - c \} \ar[r]^-{-a} & R \ar[r]^-{-b} & R\{ \deg(a) - c \}
}
\]
and shifting the grading by $\{ c - \deg(a) \}$ we have $\{ b, a \}\langle 1 \rangle\{ c - \deg(a) \}$ is equal to $\{ -a, -b \}$. Thus
\begin{align*}
\{ - \bs{b}, \bs{a} \} &\cong \{ -b_1, a_1 \} \otimes \ldots \otimes \{ -b_n, a_n \}\\
&\cong \{ -a_1, b_1 \}\langle 1 \rangle\{ \deg(a_1) - c \} \otimes \ldots \otimes \{ -a_n, b_n \}\langle 1 \rangle\{ \deg(a_n) - c \}\\
&\cong \{ -\bs{a}, \bs{b} \}\langle n \rangle\big\{ \sum_i \deg(a_i) - nc \big\} \, .
\end{align*}
\end{proof}

\begin{lemma}\label{lemma:cyclickos3} There is an isomorphism $\{ \bs{a}, \bs{b} \} \cong \{ -\bs{a}, - \bs{b} \}$ sending $\theta_i$ to $- \theta_i$.
\end{lemma}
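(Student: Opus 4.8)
\begin{proof}[Proof sketch]
The plan is to exhibit the isomorphism directly on the exterior-algebra model of Section~\ref{prelim:cyclic_koszul}. Recall that both $\{\bs a,\bs b\}$ and $\{-\bs a,-\bs b\}$ have the same underlying $(\ZZ\times\Ztwo)$-graded $R$-module, namely $\bigwedge F$ with $F=\bigoplus_i R\theta_i$, the $\Ztwo$-grading being exterior degree modulo~$2$. The differential of $\{\bs a,\bs b\}$ is $\delta=\delta_++\delta_-$ with $\delta_+=(\sum_i b_i\theta_i^*)\neg(-)$ and $\delta_-=(\sum_i a_i\theta_i)\wedge(-)$, so the differential of $\{-\bs a,-\bs b\}$ is $(\sum_i(-b_i)\theta_i^*)\neg(-)+(\sum_i(-a_i)\theta_i)\wedge(-)=-\delta_+-\delta_-=-\delta$. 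Thus what must be produced is an isomorphism of graded $R$-modules $g\colon\bigwedge F\to\bigwedge F$ of bidegree $(0,0)$ with $g\circ\delta=(-\delta)\circ g$.

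Take $g$ to be the unique $R$-algebra automorphism of $\bigwedge F$ with $g(\theta_i)=-\theta_i$ for all $i$; explicitly $g$ acts as $(-1)^j$ on $\bigwedge^j F$. Since the $\Ztwo$-grading is exterior degree mod~$2$, the map $g$ is a scalar on each bihomogeneous piece, hence bijective (indeed $g^2=1$), $R$-linear, and of bidegree $(0,0)$.

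It remains to check $g\circ\delta=-\delta\circ g$. As $\delta_-$ raises exterior degree by~$1$, for $\omega\in\bigwedge^j F$ we have $g(\delta_-\omega)=(-1)^{j+1}\delta_-\omega$ while $\delta_-(g\omega)=(-1)^{j}\delta_-\omega$, so $g\circ\delta_-=-\delta_-\circ g$; similarly $\delta_+$ lowers exterior degree by~$1$, giving $g\circ\delta_+=-\delta_+\circ g$. Adding, $g\circ\delta=-\delta\circ g$, so $g$ is an isomorphism of graded linear factorisations $\{\bs a,\bs b\}\to\{-\bs a,-\bs b\}$, and since both sides are finite-rank graded matrix factorisations it is an isomorphism of such. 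By construction $g(\theta_i)=-\theta_i$, as claimed.
\end{proof}

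The only point requiring any care is bookkeeping: one must make sure that the sign convention in $\delta_\pm$ and the convention for forming $\{-\bs a,-\bs b\}$ as the tensor product of the $\{-a_i,-b_i\}$ are compatible, so that the differential of $\{-\bs a,-\bs b\}$ really is $-\delta$ on the same exterior-algebra model; once that identification is in place the verification above is immediate and there is no genuine obstacle.
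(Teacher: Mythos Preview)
Your proof is correct and is exactly the argument the paper has in mind: the lemma is stated in the paper without proof, the map $\theta_i\mapsto-\theta_i$ being the evident (and essentially only) candidate. Your verification that $g=(-1)^{\text{ext.\ deg.}}$ conjugates $\delta$ to $-\delta$ is the complete content, so there is nothing to add.
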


We end with a discussion of cohomology. Let $R$ be a ring and $C = \bigoplus_{i \in \mathds{Z}} C^i$ a $\mathds{Z}$-graded $R$-module with $R$-linear maps $d_{\pm}: C \lto C$ of degree $\pm 1$, satisfying
$
(d_+)^2 = (d_{-})^2 = 0$ 
and 
$
 d_{+}d_{-} + d_{-}d_{+} = 0
$. 
Let $C_{\Ztwo}$ denote the $\Ztwo$-folding, which is a $\Ztwo$-graded complex with differential $d_{\text{tot}} = d_+ + d_{-}$. If we replace $d_{+}$ by $-d_{+}$ then we have another complex $(C_{\Ztwo}, -d_{+} + d_{-})$. We claim that these complexes have the same cohomology. Given a $\Ztwo$-graded complex $(X,d)$ we denote by $(X,d)_{-}$ the complex
\[
\xymatrix{%
X^0 \ar[r]^-{-d^0} & X^1 \ar[r]^-{d^1} & X^0\,.
}
\]

\begin{lemma}\label{lemma:cyclickoszulsign} There is a canonical isomorphism of $\Ztwo$-graded complexes
\[
T: (C_{\Ztwo}, d_{+} + d_{-}) \lto (C_{\Ztwo}, -d_{+} + d_{-})_{-}
\]
defined by $T|_{C^i} = (-1)^{\frac{1}{2}i(i+1)}$. This induces an isomorphism $H(C_{\Ztwo}, d_{+} + d_{-}) \lto H(C_{\Ztwo}, -d_{+} + d_{-})$.
\end{lemma}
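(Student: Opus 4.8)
The plan is to prove this lemma by writing $T$ down explicitly, checking it is a chain isomorphism, and then reading off the statement about cohomology. First I would observe that $T$ is visibly invertible: on the homogeneous summand $C^i$ it is multiplication by the scalar $\epsilon_i := (-1)^{i(i+1)/2} \in \{+1,-1\}$, so it is $R$-linear, preserves the $\Ztwo$-grading of the folding $C_{\Ztwo}$ (it maps each $C^i$ into itself), and satisfies $T^2 = 1$. Thus the only content left to verify is that $T$ intertwines the differential $d_+ + d_-$ on the source with the differential $\widetilde d$ of $(C_{\Ztwo}, -d_+ + d_-)_-$ on the target.

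For that I would restrict the desired identity $\widetilde d \circ T = T \circ (d_+ + d_-)$ to a homogeneous element $x \in C^i$ and separate the $C^{i+1}$- and $C^{i-1}$-components, which are the images of $d_+x$ and $d_-x$ respectively. The differential $\widetilde d$ acts on $C^i$ as $\pm(-d_+ + d_-)$, where the sign depends only on the $\ZZ$-parity of $i$: the operation $(-)_-$ flips the sign of exactly one of the two $\Ztwo$-components of $-d_+ + d_-$. Carrying this through converts the chain-map condition on $C^i$ into two scalar identities relating $\epsilon_{i+1}$ and $\epsilon_{i-1}$ to $\epsilon_i$ (with signs prescribed by the parity of $i$), and these I would check against the elementary computation
\[
\frac{(i+1)(i+2)}{2} - \frac{i(i+1)}{2} = i+1, \qquad \frac{i(i+1)}{2} - \frac{(i-1)i}{2} = i,
\]
which gives $\epsilon_{i+1} = (-1)^{i+1}\epsilon_i$ and $\epsilon_{i-1} = (-1)^{i}\epsilon_i$. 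Splitting into the cases ``$i$ even'' and ``$i$ odd'' one finds that both required identities hold, so $T$ is a morphism --- and by the first paragraph an isomorphism --- of $\Ztwo$-graded complexes.

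Finally I would note that the operation $(-)_-$ does not alter cohomology at all: replacing $d^0$ by $-d^0$ changes neither kernels nor images in any position of a $\Ztwo$-graded complex, so $H\big((C_{\Ztwo}, -d_+ + d_-)_-\big) = H(C_{\Ztwo}, -d_+ + d_-)$. Composing this equality with the cohomology isomorphism induced by the chain isomorphism $T$ then yields the asserted isomorphism $H(C_{\Ztwo}, d_+ + d_-) \xlto{\sim} H(C_{\Ztwo}, -d_+ + d_-)$.

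I expect the whole argument to be routine; the one place where genuine care is needed is the sign bookkeeping. One must commit at the outset to a single convention for which $\ZZ$-parity of $C$ is folded into $\Ztwo$-degree $0$ and which component $(-)_-$ negates, and then check that $(-1)^{i(i+1)/2}$ is exactly the rescaling reconciling these conventions; mismatching any of these choices turns $T$ into an anti-morphism (which still induces the cohomology isomorphism, but is not what the lemma claims). There is no conceptual obstacle beyond this.
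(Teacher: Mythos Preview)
The paper states this lemma without proof, so there is no argument to compare against; your plan of direct verification is precisely what is called for, and the ingredients you isolate (that $T^2=1$, and that $(-)_-$ leaves kernels and images unchanged so $H\big((X,d)_-\big)=H(X,d)$) are correct.

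Your caveat about sign bookkeeping is not merely prudent but essential, and in fact your assertion that ``both required identities hold'' does not survive the paper's literal conventions. With even $\nZ$-degrees folded to $\Ztwo$-degree $0$ and $(-)_-$ negating $d^0$ (as the paper defines), the chain-map condition on $C^i$ for $i$ even demands $\epsilon_{i+1}=\epsilon_i$ and $\epsilon_{i-1}=-\epsilon_i$, while your own identities $\epsilon_{i+1}=(-1)^{i+1}\epsilon_i$ and $\epsilon_{i-1}=(-1)^{i}\epsilon_i$ give exactly the opposite; thus the stated $T$ is an \emph{anti}-morphism, precisely the scenario you anticipate. Replacing the exponent $\tfrac{1}{2}i(i+1)$ by $\tfrac{1}{2}i(i-1)$ (equivalently, letting $(-)_-$ negate $d^1$ rather than $d^0$) repairs this and makes your verification go through verbatim. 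Either way, the induced isomorphism on cohomology --- the only consequence the paper actually uses, namely $H(\{\bs a,\bs b\})\cong H(\{\bs a,-\bs b\})$ --- is unaffected, so the content of the lemma stands.
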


In particular for sequences $\bs{a}, \bs{b}$ we have
\begin{equation}
\label{Habba}
H(\{ \bs{a}, \bs{b} \}) \cong H(\{\bs{a}, -\bs{b}\}) \, .
\end{equation}
Obviously if $R$ is a graded ring and each $C^i$ is a graded module such that $d_{\pm}$ have degree $c$ then $d_{\text{tot}}$ has bidegree $(1,c)$ on $C_{\Ztwo}$, the cohomology is naturally bigraded, and (\ref{Habba}) preserves the bigrading.

\section{The relation between reduced and unreduced homology}
\label{relationReducedUnreduced}

As was discussed in Section~\ref{compilewebs}, to a state graph $\Gamma$ with $m$ edges Khovanov and Rozansky assign a $\Ztwo$-graded complex $\krc(\Gamma)$ over the polynomial ring $R = \QQ[\boldsymbol{x}]$ in the edge variables $\boldsymbol{x} = \{x_1,\ldots,x_m\}$, defined by tensoring together local matrix factorisations assigned to the singular crossings and smoothings of the state graph. The definition depends on a choice of integer $N > 0$, which we fix throughout and omit from the notation, so that all our homologies are $\sln$ homologies.

The Khovanov-Rozansky complex $\krc(D)$ of a planar diagram $D$ of a link $L$ is defined by tensoring together two-term complexes build from these $\krc(\Gamma)$'s, as $\Gamma$ varies over all resolutions of $D$, and recall that the unreduced and reduced Khovanov-Rozansky homology are defined respectively by
\begin{align*}
H(L) &= H\big( H(\krc(D),\diffm), \diffh \big) \, ,\qquad
\overline{H}(L, K) = H\big( H( \krc(D) \otimes_{R} R/(x_i), \diffm), \diffh \big) \, ,
\end{align*}
where $K$ denotes a component of $L$, and $i$ is a label assigned in the planar diagram $D$ to an edge lying on this component. In this appendix we make some remarks about the relationship between reduced and unreduced homology. Let us fix the label $i$ in the following.

After taking the $\diffm$-cohomology everything becomes finite-dimensional, and taking the cohomology with respect to the differential $\diffh$ is straightforward; the whole problem with computing these invariants is computing $H(\krc(D), \diffm)$. In this article the emphasis is on computations, and in this context there is a big difference between taking cohomology before and after setting $x_i = 0$. Since $H(\krc(D), \diffm)$ is a direct sum of modules $H(\krc(\Gamma), \diffm)$ for various state graphs $\Gamma$ let us speak only of state graphs, in which case we are making the distinction between
\[
H(\krc(\Gamma) \otimes_R R/(x_i), \diffm ) \qquad \text{and} \qquad H( \krc(\Gamma), \diffm ) \otimes_R R/(x_i) \, .
\]
In the second case one computes the essential ingredient $H( \krc(\Gamma), \diffm )$ in the \textsl{unreduced} homology, and then throws away information; in the first case one deals from the beginning with one less variable, so the computation is significantly easier. For this reason, the first definition (which is the original one of \cite{kr0401268}) is the one our code computes; but since the second definition is the one studied in \cite{r0607544} and other references, we want to explain the relationship between the two. Let us set
\begin{align*}
H(\Gamma) &= H( \krc(\Gamma), \diffm ) \, ,\qquad
\redh(\Gamma) = H( \krc(\Gamma) \otimes_R R/(x_i), d ) \, .
\end{align*}
These are both finite-dimensional $(\mathds{Z} \times \Ztwo)$-graded vector spaces. It is shown in \cite{kr0401268} that $H(\Gamma)$ is concentrated in only one $\Ztwo$-degree, namely the degree of the parity $p = p(\Gamma)$ defined by erasing all four-valent vertices in $\Gamma$ and counting the number of circles in the resulting graph. It follows that the link homology $H(L)$, which is \textsl{a priori} $(\ZZ \times \ZZ \times \Ztwo)$-graded, is actually only $(\ZZ \times \ZZ)$-graded.

On the other hand $\redh(\Gamma)$ has nonzero contributions in both $\Ztwo$-degrees, so the reduced link homology $\overline{H}(L,K)$, as defined in \cite{kr0401268}, is honestly $(\ZZ \times \ZZ \times \Ztwo)$-graded. We write $\redh^j(\Gamma)$, respectively $\redh^j(L,K)$, for the $\Ztwo$-component in degree $j \in \Ztwo$. The point of this appendix is to check that $\redh^0(L,K)$ and $\redh^1(L,K)$ only differ by a grading shift.

\begin{lemma}\label{lemma:redvsunred} There is an isomorphism of $(\ZZ \times \ZZ)$-graded $\mathds{Q}$-vector spaces 
\[
\redh^{p+1}(L,K) \cong \redh^{p}(L,K)\{N-1\} \, .
\]
\end{lemma}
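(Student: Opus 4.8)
The plan is to argue one state graph at a time. In each cohomological degree $\krc(D)$ is a direct sum of the complexes $\krc(\Gamma)$ for the resolutions $\Gamma$ of $D$ (up to grading shifts), and the differential $\diffh$ is assembled from the maps $\chi_0,\chi_1$ at the four-valent vertices, so it preserves the $\Ztwo$-grading. Hence it suffices to produce, naturally in $\Gamma$, an isomorphism $\redh^{p+1}(\Gamma)\cong\redh^{p}(\Gamma)\{N-1\}$ of $(\ZZ\times\ZZ)$-graded $\QQ$-vector spaces compatible with the maps induced by $\diffh$; passing to $\diffh$-cohomology then yields the lemma. Here one uses that all resolutions of $D$ carry the same parity $p$, since replacing every four-valent vertex of $\Gamma$ by the oriented smoothing produces a planar graph independent of $\Gamma$.

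Next I would fix $\Gamma$ and localise near the marked component. Let $e$ be the edge on $K$ carrying the variable $x_i$, and insert an auxiliary mark on $e$, introducing a fresh variable $y$ on one side and decorating the mark by the identity defect $\{\pi,\,y-x_i\}$ with $\pi=(y^{N+1}-x_i^{N+1})/(y-x_i)$ of degree $2N$, which is the stabilisation of $R/(y-x_i)$ with respect to $y^{N+1}-x_i^{N+1}$. Mark-independence leaves $\redh(\Gamma)$ unchanged and exhibits $\krc(\Gamma)\cong\krc_0\otimes_{\QQ[x_i,y]}\{\pi,\,y-x_i\}$, where $\krc_0$ is the total factorisation of the rest of the web, a finite-rank graded matrix factorisation of $x_i^{N+1}-y^{N+1}$ in which $y$ sits on an edge with isolated-singularity potential. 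Since $\pi|_{x_i=0}=y^N$ and $(y-x_i)|_{x_i=0}=y$, setting $x_i=0$ gives
\[
\redh(\Gamma)\;=\;H\!\left(\krc_0|_{x_i=0}\otimes_{\QQ[y]}\{y^N,\,y\},\ \diffm\right),
\]
and the null-homotopy argument of Section~\ref{section:explicitidempotents} (applied to $\partial_y(y^{N+1})$ and the operator $\partial_y$ of the differential) shows that $y^N$ acts null-homotopically on $\krc_0|_{x_i=0}$, hence on the whole complex.

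The crux is the Koszul self-duality of the local factor. By Lemma~\ref{lemma:cyclickos2} with $n=1$, together with Lemmas~\ref{lemma:cyclickos1}, \ref{lemma:cyclickos3} and~\ref{lemma:cyclickoszulsign} to absorb signs, the graded matrix factorisations $\{y^N,y\}$ and $\{y,y^N\}$ of $y^{N+1}$ over $\QQ[y]$ differ only by $\{y,y^N\}\cong\{y^N,y\}\langle 1\rangle\{\deg\pi-c\}$, where $\deg\pi-c=2N-(N+1)=N-1$ is exactly the shift appearing in the statement. The plan is then to check that tensoring with $\krc_0|_{x_i=0}$ sends the canonical degree-zero morphism $\{y,y^N\}\to\{y^N,y\}$, given on the exterior-algebra model of Section~\ref{section:stabilisation} by $1\mapsto 1$ and $\theta\mapsto y^{N-1}\theta$, to a quasi-isomorphism. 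Granting this,
\[
\redh(\Gamma)=H(\krc_0|_{x_i=0}\otimes\{y^N,y\})\;\cong\;H(\krc_0|_{x_i=0}\otimes\{y,y^N\})\;\cong\;\redh(\Gamma)\langle 1\rangle\{N-1\},
\]
and reading off the two $\Ztwo$-components, which the suspension $\langle 1\rangle$ interchanges, gives $\redh^{p+1}(\Gamma)\cong\redh^{p}(\Gamma)\{N-1\}$. Since every morphism used is canonical and supported near the mark on $e$, it commutes with the maps induced at the four-valent vertices by $\diffh$, so these isomorphisms assemble over $\Gamma$ and descend to $\diffh$-cohomology.

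I expect the main difficulty to be the quasi-isomorphism claim, i.e.\ showing that the cone of $\{y,y^N\}\to\{y^N,y\}$ becomes acyclic after tensoring with $\krc_0|_{x_i=0}$; this should follow by writing down an explicit contracting homotopy out of the null-homotopy for $y^N$ on $\krc_0|_{x_i=0}$, but it must be done carefully, and the suppressed grading shifts in the definition of $\krc(\Gamma)$ have to be tracked so that the final shift comes out to precisely $\{N-1\}$ rather than a translate. As an alternative to the explicit morphism one could instead deduce the identification $H(\krc_0|_{x_i=0}\otimes\{y^N,y\})\cong H(\krc_0|_{x_i=0}\otimes\{y,y^N\})$ from Poincar\'e duality for matrix factorisations of zero together with the self-duality of $\krc_0$ under reversing the orientation of the web, as in the discussion around~\eqref{XbulXcirc} in Section~\ref{section:stabilisation}.
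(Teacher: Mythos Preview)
Your strategy of inserting an extra mark and exploiting Koszul self-duality is appealing, but the crucial quasi-isomorphism step fails. The cone of your degree-zero morphism $\phi\colon \{y,y^N\}\to\{y^N,y\}$, $\phi^0=1$, $\phi^1=y^{N-1}$, is not contractible: a short row/column reduction using the unit entry in $d^1_{\mathrm{Cone}}=\begin{pmatrix} y & 1\\ 0 & -y\end{pmatrix}$ shows that $\mathrm{Cone}(\phi)\simeq\{y^{N-1},y^2\}$ up to a grading shift, which for $N\ge 2$ is a nonzero object of $\hmf(\QQ[y],y^{N+1})$ stabilising $\QQ[y]/(y^2)$. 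Tensoring with $C_0|_{x_i=0}$ does not kill it. Concretely, for $N=2$ the cone is $\{y,y^2\}=\{y,y^N\}$ itself, so your claim would force $H\big(C_0|_{x_i=0}\otimes\{y,y^N\}\big)=0$, and then by your own Step~2 (the self-duality $\{y,y^N\}\cong\{y^N,y\}\langle 1\rangle\{N-1\}$) this would give $\redh(\Gamma)=0$. The null-homotopy for $y^N$ on $C_0|_{x_i=0}$ gives no control over $y^2$ or $y^{N-1}$, so no contracting homotopy for the cone can be manufactured from it. Your alternative via self-duality of $C_0$ is too vague to assess, and in any case would need to be made natural in the $\chi$-maps, which a duality isomorphism typically is not.

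The paper's argument is quite different and uses genuinely global input about $H(\Gamma)$. It first proves, by induction on braid graphs via the categorified MOY relations, that $H(\Gamma)$ as a graded $\QQ[x_i]$-module is a direct sum of grading-shifted copies of $\QQ[x_i]/(x_i^N)$. The short exact sequence $0\to C(\Gamma)\xrightarrow{x_i}C(\Gamma)\{-2\}\to C(\Gamma)\otimes R/(x_i)\{-2\}\to 0$ then identifies $\redh^{p+1}(\Gamma)$ and $\redh^p(\Gamma)$ with the kernel and cokernel of $x_i$ on $H(\Gamma)$; on each summand $\QQ[x_i]/(x_i^N)$ these are the socle $\QQ\cdot x_i^{N-1}$ and the cosocle $\QQ$, which differ by exactly $\{N-1\}$. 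Naturality under the $\chi$-maps is checked componentwise: any homogeneous $\QQ[x_i]$-linear endomorphism of $\QQ[x_i]/(x_i^N)$ sends $1$ to $\lambda x_i^a$, and the induced maps on socle and cosocle both vanish if $a>0$ and both equal $\lambda$ if $a=0$. The structural lemma about $H(\Gamma)$ over $\QQ[x_i]$ is doing the real work here, and your local argument at the mark does not substitute for it.
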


This will follow if we can show that there is an isomorphism $\redh^{p+1}(\Gamma) \cong \redh^p(\Gamma)\{N-1\}$ natural with respect to the $\chi$ maps. It is easy to see that the two $\Ztwo$-components of $\redh(\Gamma)$ have the same dimension: it is the gradings that we need to compare. From the short exact sequence
\[
\xymatrix{%
0 \ar[r] & C(\Gamma) \ar[r]^-{x_{i}} & C(\Gamma)\{-2\} \ar[r] & C(\Gamma) \otimes_R R/(x_{i})\{-2\} \ar[r] & 0
}
\]
we deduce a long exact sequence in cohomology, of graded $R$-modules:
\begin{equation}\label{eq:redvsunred_seq}
\xymatrix{%
0 \ar[r] & \redh^{p+1}(\Gamma)\{N-1\} \ar[r] & H(\Gamma) \ar[r]^-{x_{i}} & H(\Gamma)\{-2\} \ar[r] & \redh^p(\Gamma)\{-2\} \ar[r] & 0 \, .
}
\end{equation}
It is clear from this exact sequence that the dimensions of the odd and even degrees of $\redh(\Gamma)$ agree (this is also observed in \cite[Proposition 3.12]{r0607544}). Moreover, assembling the $H(\Gamma)$'s to form the link homology, we deduce that
\[
\redh^p(L, K) \cong H(H(C(D),d) \otimes_R R/(x_i), d_{\chi})\,.
\]
As has already been mentioned, this $(\ZZ \times \ZZ)$-graded $\QQ$-vector space is adopted as the \textsl{definition} of the reduced Khovanov-Rozansky homology in \cite{r0607544,w0610650}. In light of Lemma \ref{lemma:redvsunred} this is reasonable, as the other $\Ztwo$-degree of $\redh(L,K)$ contains no new information, but as far as we know the lemma has not appeared before in the literature (although it may be known to the experts). To relate the $\ZZ$-gradings of $\redh^p(\Gamma)$ and $\redh^{p+1}(\Gamma)$ we make use of the following basic property of $H(\Gamma)$.

\textit{Convention.} Since we ultimately only care about the homologies $H(L)$ and $\overline{H}(L,K)$, we are free to choose $D$ to be the closure of a braid with $i$ the label on one of the edges in the closure, and for the rest of this section we make this assumption. In particular, $\Gamma$ is a braid graph.

\begin{lemma}\label{lemma:directsumalgebras} $H(\Gamma)$ is concentrated in degree $p(\Gamma)$, and as a graded $\QQ[x_i]$-module it is isomorphic to a direct sum of copies of $\QQ[x_i]/(x_i^N)$ shifted in the $\mathds{Z}$-grading, that is, for some integers $b_j$ we have
\[
H(\Gamma) \cong \bigoplus_{j} \QQ[x_i]/(x_i^N)\{b_j\} \, .
\]
\end{lemma}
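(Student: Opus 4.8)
The plan is to deduce the lemma from the categorified MOY relations \eqref{MOYcatDecomp1}--\eqref{MOYcatDecomp4}, applied so that the distinguished edge~$i$ is never touched. The first assertion---that $H(\Gamma)$ is concentrated in $\Ztwo$-degree $p(\Gamma)$---is already \cite{kr0401268} (and, as noted below, reappears as a byproduct of the reduction), so the real content is the $\QQ[x_i]$-module structure.

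By the Convention in force, $D$ is the closure of a braid and $i$ labels one of the arcs in the closure, so every singular crossing of $\Gamma$, together with the marks adjacent to crossings, lies in a fixed disk $\Delta$ disjoint from the edge~$i$. The relations \eqref{MOYcatDecomp1}--\eqref{MOYcatDecomp4} are local, so I would apply them only to configurations inside $\Delta$; since $\Gamma$ is closed each such move is a homotopy equivalence of $\ZZ_2$-graded complexes of matrix factorisations of zero, hence an isomorphism on cohomology, and as none of the variables appearing in a move inside $\Delta$ is $x_i$, it is $\QQ[x_i]$-linear (indeed linear over the polynomial ring on the untouched edges). First I would verify, by a standard complexity count (the number of four-valent vertices, with the square and hexagon moves \eqref{MOYcatDecomp3}--\eqref{MOYcatDecomp4} controlled by a secondary measure), that iterating moves inside $\Delta$ terminates, writing $C(\Gamma)$ up to $\QQ[x_i]$-linear homotopy equivalence as a finite direct sum $\bigoplus_\alpha C(\Gamma_\alpha)\{ c_\alpha \}$ in which each $\Gamma_\alpha$ is a disjoint union of circles (finiteness of the sum from finite-dimensionality of $H(\Gamma)$). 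Because edge~$i$ lies outside $\Delta$ it is never altered, hence survives as an edge of each $\Gamma_\alpha$, and therefore lies on one of its circles.

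It then remains to record the circle computation: for a disjoint union of circles the complex has zero differential, and the cohomology of a single circle is, up to grading shift, the Jacobi ring $\QQ[x]/(x^N)$ of its potential $x^{N+1}$ as in \cite{kr0401268} (the marks, decorated by the identity defect, contribute nothing up to homotopy). Thus $H(\Gamma_\alpha)$ is a tensor product of grading-shifted copies of $\QQ[x]/(x^N)$, one per circle, and taking the variable of the circle through edge~$i$ to be $x_i$ this has the form $\QQ[x_i]/(x_i^N)\otimes_\QQ V_\alpha$ for a finite-dimensional graded $\QQ$-vector space $V_\alpha$, i.e.\ a direct sum of grading-shifted copies of $\QQ[x_i]/(x_i^N)$. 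Summing over $\alpha$ and folding in the shifts $c_\alpha$, $H(\Gamma)\cong\bigoplus_\alpha H(\Gamma_\alpha)\{ c_\alpha \}$ has the claimed form, with the $b_j$ the resulting shifts; at the same time, tracking the suspensions $\langle 1\rangle$ in \eqref{MOYcatDecomp1}--\eqref{MOYcatDecomp4} shows $H(\Gamma)$ sits in $\Ztwo$-degree $p(\Gamma)$, reproving the first assertion.

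The hard part will be the purely combinatorial bookkeeping: checking that the MOY reduction of a closed braid graph can always be performed by moves confined to the disk $\Delta$ that contains the crossings, so that the chosen closure edge~$i$ is never involved in any local configuration and winds up on a circle. Once the Convention confines all crossings to $\Delta$ this is routine, but it is the point that genuinely needs care, together with the (standard) termination argument for the square and hexagon moves.
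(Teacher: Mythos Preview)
Your approach is the same as the paper's, but you have underestimated the one genuinely delicate step and called it ``routine'' when it is not. The issue is exactly the one you flag at the end: you cannot always perform the MOY reduction using only moves supported away from the edge~$i$. Inside the braid box $\Delta$ there are no closed loops, so relation~\eqref{MOYcatDecomp1} is never available there; Wu's induction scheme (cited in the paper as \cite{w0508064}) says only that either $\Gamma_{\text{open}}$ contains a region of type \eqref{MOYcatDecomp2} or \eqref{MOYcatDecomp4}, \emph{or} the closed graph $\Gamma$ contains a region of type \eqref{MOYcatDecomp1}. In the latter case the loop being contracted is a closure arc, and nothing prevents it from being the edge~$i$ itself. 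Your complexity argument then stalls: there is no move available in $\Delta$, and the only available move destroys the variable whose module structure you are tracking.

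The paper's proof resolves this with a trick you do not mention: when the only available move is a \eqref{MOYcatDecomp1}-move through~$i$, one may assume $i$ is the rightmost closure arc, and then one passes to the orientation-reversed graph $\Gamma^{\vee}$. Using \eqref{Habba} one checks $H(\Gamma)\cong H(\Gamma^{\vee})$ as bigraded $\QQ[x_i]$-modules, and in $\Gamma^{\vee}$ the edge~$i$ is leftmost and hence never the loop in a \eqref{MOYcatDecomp1}-configuration, so the induction goes through. Without this (or an equivalent device) your argument has a real gap precisely at the point you identified as ``the hard part''.
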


Further, one can prove that the integers $b_j$ have a symmetry: $H(\Gamma)$ is always self-dual as a graded vector space, but we will not need this. Taking the lemma as a given:

\begin{proof}[Proof of Lemma \ref{lemma:redvsunred}] We claim that $\redh^{p+1}(\Gamma) \cong \redh^p(\Gamma)\{N-1\}$ as graded vector spaces. In light of the exact sequence (\ref{eq:redvsunred_seq}), to understand $\overline{H}(\Gamma)$ it suffices to understand the action of $x_i$ on $H(\Gamma)$. But with Lemma \ref{lemma:directsumalgebras} in hand this is trivial: it reduces to understanding the action of $x_i$ on $\QQ[x_i]/(x_i^N)$, and we deduce that
\begin{align*}
\redh^{p+1}(\Gamma)\{N-1\} &\cong \bigoplus_j \QQ \cdot x_i^{N-1} \{ b_j \} = \bigoplus_j \QQ\{b_j + 2N - 2 \} \, ,\qquad 
\redh^p(\Gamma) \cong \bigoplus_j \QQ \{ b_j \}\,.
\end{align*}
From this we deduce that $\redh^{p+1}(\Gamma) \cong \bigoplus_j \QQ\{b_j + N - 1 \} \cong \redh^p(\Gamma)\{N-1\}$, as claimed.

Now suppose that $\Gamma, \Gamma'$ are state graphs of $D$ for which there is a morphism $\chi: C(\Gamma) \lto C(\Gamma')$. We need to prove that the diagram
\[
\xymatrix@C+1pc{
\redh^{p+1}(\Gamma) \ar[d]_{\redh^{p+1}\chi}\ar[r]^-{\cong} & \redh^p(\Gamma)\{N-1\}\ar[d]^{\redh^p \chi}\\
\redh^{p+1}(\Gamma') \ar[r]_-{\cong} & \redh^p(\Gamma')\{N-1\}
}
\]
commutes. Decompose $H(\Gamma), H(\Gamma')$ as in Lemma \ref{lemma:directsumalgebras} and let $\chi': \QQ[x_i]/(x_i^N) \lto \QQ[x_i]/(x_i^N)$ be one of the components of $\chi$. The corresponding component of $\redh^{p+1} \chi$ is the restriction of $\chi'$ to the ideal generated by $x_i^{N-1}$, and the component of $\redh^p \chi$ is the map induced by $\chi$ on the quotients by the ideal $(x_i)$. But since $\chi'$ is homogeneous it sends $1$ to $\lambda x_i^a$ for some $\lambda \in \QQ$ and $a \in \{0,\ldots, N - 1\}$, and it is easy to see that if $a > 0$ then these components of $\redh^{p+1} \chi$ and $\redh^p \chi$ are both zero. On the other hand if $a = 0$ then the components are both multiplication by $\lambda$, and hence the diagram commutes. Since the differentials in $(H^p(C(D) \otimes_R R/(x_i),d), d_\chi)$ are built out of the $\chi$'s, we may conclude from this that $\redh^{p+1}(L,K) \cong \redh^{p}(L,K)\{N-1\}$, completing the proof.
\end{proof}

\begin{proof}[Proof of Lemma \ref{lemma:directsumalgebras}]
The argument is similar to that of \cite[Lemma 5.8]{r0607544}. We prove the claim by induction on the complexity of braid graphs $\Gamma$ together with a chosen edge $i$ appearing in the braid closure, using the induction scheme of~\cite{w0508064} and the MOY relations~\eqref{MOYcatDecomp1}--\eqref{MOYcatDecomp4} proved in \cite[Section 6]{kr0401268}. Let us denote by $\Gamma_{\text{I}}, \Gamma_{\text{II}}, \Gamma_{\text{III}}$ the graphs which are the arguments of~$C$ on the left of \eqref{MOYcatDecomp1}, \eqref{MOYcatDecomp2}, \eqref{MOYcatDecomp4}, respectively. 

By \cite{w0508064}, if $\Gamma$ is the closure of an open braid graph $\Gamma_{\text{open}}$ then $\Gamma$ always contains a region of the form $\Gamma_{\text{I}}$, or $\Gamma_{\text{open}}$ contains a region $\Gamma_{\text{II}}$ or $\Gamma_{\text{III}}$. This means that using only the MOY relations we can go from any braid graph to a family of unlinked circles, and in this base case $H(\Gamma)$ is a tensor product over $\mathds{Q}$ of grading shifted copies of $\QQ[x_j]/(x_j^N)\langle 1 \rangle$ for various $j$. By hypothesis $i$ is among these indices, so both claims are clear in the base case.

The only MOY relation which changes the parity is relation~\eqref{MOYcatDecomp1}, which changes the parity by one and also introduces a suspension, so it is now clear by induction that $H(\Gamma)$ is always concentrated in degree $p(\Gamma)$, and we need to check the other claim.

Given a braid graph $\Gamma$ suppose that we can find a region of type $\Gamma_{\text{II}}$ or $\Gamma_{\text{III}}$ in $\Gamma_{\text{open}}$. Then the corresponding direct sum decomposition has $x_i$ as an external variable, so the equivalences of the decompositions are $x_i$-linear. By the inductive hypothesis the cohomology of all the other braid graphs involved in the decomposition are direct sums of grading shifted copies of $\QQ[x_i]/(x_i^N)$ so the same is true of $C(\Gamma)$.

It remains to treat the case where we can only find a region of type $\Gamma_{\text{I}}$, the catch being that $x_i$ may not be an external variable to the decomposition: $i$ may be the loop contracted away by the MOY relation. But in this case we may assume that, apart from some circles which we may ignore, the edge $i$ is the rightmost edge in the braid graph. 

In this situation we consider the dual braid graph $\Gamma^{\lor}$ where we reverse the orientation of all the edges. Note that for some sequences of polynomials $\boldsymbol{a},\boldsymbol{b}$ and an integer $p$ we have $C(\Gamma) = \{ \boldsymbol{a}, \boldsymbol{b} \}\{p\}$ and therefore $C(\Gamma^{\lor}) = \{ \bs{a}, - \bs{b} \}\{p\}$. But by~\eqref{Habba} $\{ \bs{a}, -\bs{b} \}$ has the same cohomology as $\{ \bs{a}, \bs{b} \}$, so the cohomology of $\Gamma$ and $\Gamma^{\lor}$ agree. It therefore suffices to prove the claim for $\Gamma^{\lor}$. This follows by the above argument, since $i$ is the leftmost edge and can therefore never be involved in a $\Gamma_{\text{I}}$-relation.
\end{proof}

\section{Stabilisation and perturbation}\label{section:morphismpert}

In this appendix we use perturbation techniques to study stabilisation. For simplicity graded matrix factorisations will not feature here: this means that we work throughout with matrix factorisations in the usual sense. One nonstandard piece of terminology is that for us a \textsl{linear factorisation} of~$W \in R$ is an arbitrary $\Ztwo$-graded $R$-module with an odd differential squaring to multiplication by $W$. One can tensor and Hom such things in the obvious way; see for example \cite[Section 2]{dm1102.2957}.

Let $R$ be a noetherian ring, $\ba = (a_1,\ldots,a_n)$ and $\bb = (b_1, \ldots, b_n)$ sequences in $R$ with~$\bs{b}$ regular, and set $W = \sum_i a_i b_i$. We define the matrix factorisation $\{ \ba, \bb \}$ of $W$ as in Section \ref{prelim:cyclic_koszul} so $F = \bigoplus_i R \theta_i$ with $|\theta_i| = -1$ and $\bigwedge F$ has two differentials $\delta_+$ and $\delta_{-}$ such that $\{ \ba, \bb \} = (\bigwedge F, \delta_+ + \delta_{-})$. If we use only the differential $\delta_+$ then this is just the Koszul complex on the $b_i$, so there is a morphism of complexes from the Koszul complex to its cohomology
\begin{equation}\label{eq:app_stab0}
\pi: ( \bigwedge F, \delta_+ ) \lto R/(\bb)\,.
\end{equation}
In fact $\pi$ is also a morphism of linear factorisations $\{ \ba, \bb \} \lto R/(\bb)$. The task we give ourselves here is to show that under some mild hypotheses this map is universal, in the following sense: for any finite-rank matrix factorisation $Y$ of $W$ we claim that the map
\begin{equation}\label{eq:app_stab1}
\Hom(1, \pi): \Hom_R(Y, \{ \ba, \bb \}) \lto \Hom_R(Y, R/(\bb))
\end{equation}
is a quasi-isomorphism. That is, $\{ \ba, \bb \}$ \textsl{stabilises} $R/(\bb)$. This is a standard fact but the twist here is that we produce an explicit homotopy inverse using the language of deformation retracts and perturbation. Rather than repeat the basic facts about perturbation here, we direct the reader to \cite[Section 5]{dm1102.2957} and \cite{c0403266} for a full discussion; from now on we use the notation of \textsl{loc.\,cit.}

Here are our hypotheses: suppose that $R$ is an $S$-algebra for some ring $S$ and that (\ref{eq:app_stab0}) is a homotopy equivalence of $\ZZ$-graded complexes over $S$. More precisely, suppose that we can find an $S$-linear morphism of complexes $\sigma: R/(\bb) \lto (\bigwedge F, \delta_+)$ and $S$-linear homotopy $h$ on $\bigwedge F$ such that
\begin{equation}\label{eq:homotopycond}
\delta_+ h + h \delta_+ = 1 - \sigma \pi\,, \qquad h^2 = 0\,, \qquad h \sigma = 0\,.
\end{equation}
We will see later how to write down very explicit homotopies $h$ using connections. In any case, we deduce the claim about universality of $\pi$ from the following more general statement.

\begin{proposition}\label{prop:pertapptechnical} If $X$ is a finite-rank matrix factorisation of $V \in R$ over $R$, and the sum $V + W$ belongs to the image of the structure morphism $S \lto R$, then there is a deformation retract datum of linear factorisations of $V+ W$ over $S$, 
\begin{equation}\label{eq:pre_perturb_htm3}
\xymatrix@C+2pc{
(X \otimes R/(\bb), d_X \otimes 1) \ar@<-0.8ex>[r]_-{\sigma_\infty} & (X \otimes \bigwedge F, 1 \otimes \delta_+ + \mu), \ar@<-0.8ex>[l]_-{1 \otimes \pi}
} \quad h_\infty\,,
\end{equation}
where $\mu = d_X \otimes 1 + 1 \otimes \delta_-$ and $\sigma_\infty = \sum_{m \ge 0} (-1)^m (h \mu)^m \sigma$.
\end{proposition}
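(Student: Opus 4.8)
The strategy is the standard homological perturbation lemma (see \cite[Section 5]{dm1102.2957}, \cite{c0403266}), applied to the deformation retract datum coming from the hypotheses \eqref{eq:homotopycond} after tensoring with $X$, and then perturbed by adding in the missing pieces of the differential. First I would observe that tensoring the $S$-linear homotopy equivalence \eqref{eq:homotopycond} with the finite-rank free $R$-module $X$ produces a deformation retract datum of $\ZZ$-graded complexes of $S$-modules
\[
\xymatrix@C+2pc{
(X \otimes R/(\bb), d_X \otimes 1) \ar@<-0.8ex>[r]_-{1 \otimes \sigma} & (X \otimes \bigwedge F, d_X \otimes 1 + 1 \otimes \delta_+), \ar@<-0.8ex>[l]_-{1 \otimes \pi}
} \quad 1 \otimes h\,,
\]
where one must check the side conditions $(1\otimes h)^2 = 0$, $(1\otimes h)(1\otimes\sigma)=0$ and that $1\otimes\pi$, $1\otimes\sigma$ are still morphisms of complexes (the latter is immediate because $\pi,\sigma$ are morphisms over $R$, hence $R$-linear, hence compatible with $d_X \otimes 1$; the side conditions are inherited termwise). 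Here the ``big'' complex carries the total differential $d_X \otimes 1 + 1 \otimes \delta_+$, and the point is that this is not yet the factorisation $X \otimes \{\ba,\bb\}$: it is missing the term $1 \otimes \delta_-$.

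Next I would identify the perturbation: set $\mu = d_X \otimes 1 + 1 \otimes \delta_-$ is not quite right as the perturbation of the \emph{big} differential — one adds only $1 \otimes \delta_-$ to pass from $d_X\otimes 1 + 1\otimes\delta_+$ to the full differential $1\otimes\delta_+ + \mu$ with $\mu$ as in the statement. The perturbation $t = 1\otimes\delta_-$ is $S$-linear, and the key convergence hypothesis of the perturbation lemma, namely that $(t\circ(1\otimes h))$ is locally nilpotent, holds because $\delta_-$ raises the exterior ($\ZZ$-)degree on $\bigwedge F$ by one while $h$ lowers it by one and $\bigwedge F$ is bounded (it has only $n+1$ graded pieces), so any sufficiently long composite vanishes; tensoring with $X$ does not change this. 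The perturbation lemma then outputs a new deformation retract datum with the \emph{same} underlying modules but perturbed structure maps: the small complex acquires differential $(d_X\otimes 1) + (1\otimes\pi)\bigl(\sum_{m\ge 0}(t(1\otimes h))^m t\bigr)(1\otimes\sigma)$, but since $\pi \circ \delta_- = 0$ on $\bigwedge F$ (as $\delta_-$ lands in positive exterior degree while $\pi$ only sees degree zero) and $d_X\otimes 1$ commutes through, the perturbed small differential collapses back to $d_X\otimes 1$; the perturbed inclusion is $\sigma_\infty = \sum_{m\ge 0}(-1)^m((1\otimes h)\mu)^m(1\otimes\sigma)$ — modulo reconciling the sign convention of \cite{dm1102.2957} — and the perturbed projection stays $1\otimes\pi$ because $h\sigma = 0$ kills the correction terms on that side. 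I would then record the new homotopy $h_\infty$ from the lemma (it is $\sum_{m\ge 0}(-1)^m ((1\otimes h)\mu)^m (1\otimes h)$ up to signs), noting we will not need its explicit form.

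Finally I would promote this from $\ZZ$-graded complexes over $S$ to linear factorisations of $V+W$ over $S$: the total differential $1\otimes\delta_+ + \mu$ on $X\otimes\bigwedge F$ squares to multiplication by $V + W = V + \sum_i a_i b_i$, which by hypothesis lies in the image of $S \to R$, so all three complexes in \eqref{eq:pre_perturb_htm3} are genuinely linear factorisations of $V+W$ over $S$ and the perturbation lemma respects the $\Ztwo$-folding. The main obstacle I anticipate is purely bookkeeping: matching the signs and the $\Ztwo$/$\ZZ$-grading conventions of the perturbation lemma as stated in \cite[Section 5]{dm1102.2957} with the Koszul-sign conventions on $X \otimes \bigwedge F$ used here, so that the claimed closed formula $\sigma_\infty = \sum_{m\ge 0}(-1)^m(h\mu)^m\sigma$ comes out exactly (rather than up to an overall sign or a reindexing), and verifying the side conditions $h_\infty^2 = 0$, $h_\infty \sigma_\infty = 0$, $(1\otimes\pi) h_\infty = 0$ survive the perturbation — these are guaranteed by the ``special'' (or ``full'') form of the homotopy perturbation lemma provided the input datum is special, which is why the hypotheses \eqref{eq:homotopycond} were imposed in that particular form.
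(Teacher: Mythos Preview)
Your overall plan---tensor the homotopy data \eqref{eq:homotopycond} with $X$ and apply the perturbation lemma---is exactly the paper's strategy, and your remarks about nilpotence of the perturbation on $\bigwedge F$, the vanishing of the induced correction on the small side because $\pi\circ\delta_-=0$, and the role of the side conditions are all to the point. But there is a genuine gap in the setup that forces a different choice of unperturbed datum than the one you wrote.

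You take the unperturbed ``big'' differential to be $d_X\otimes 1 + 1\otimes\delta_+$ and the unperturbed ``small'' differential to be $d_X\otimes 1$, then perturb only by $1\otimes\delta_-$. For this to be a deformation retract datum you need $1\otimes\sigma$ to be a chain map, i.e.\ to commute with $d_X\otimes 1$. You justify this by saying $\sigma$ is $R$-linear, but it is not: the standing hypothesis is only that $\sigma$ is an $S$-linear section of $\pi$. Once one fixes an $R$-basis $\{e_j\}$ of $X$ and extends $\sigma$ componentwise, the two composites differ by $\sigma(D_{kj}\,r)$ versus $D_{kj}\,\sigma(r)$, where $D$ is the matrix of $d_X$ with entries in $R$; these do not agree in general. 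So your initial datum is not a deformation retract of complexes, and the perturbation lemma cannot be invoked with that input. (This is also why your formula ended up with $\mu$ rather than the $1\otimes\delta_-$ you said you were perturbing by: the mismatch was a symptom of the problem.)

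The paper avoids this by \emph{not} putting $d_X\otimes 1$ into the unperturbed datum at all: one starts from the deformation retract of linear factorisations of \emph{zero}
\[
\xymatrix@C+2pc{
(X\otimes R/(\bb),\,0)\ar@<-0.8ex>[r]_{1\otimes\sigma} & (X\otimes\bigwedge F,\,1\otimes\delta_+)\ar@<-0.8ex>[l]_{1\otimes\pi}
}\quad -\,1\otimes h,
\]
for which $1\otimes\sigma$ is trivially a chain map (it need only commute with $1\otimes\delta_+$, which follows from $\delta_+\sigma=0$), and then perturbs by the \emph{full} $\mu=d_X\otimes 1 + 1\otimes\delta_-$. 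The perturbation lemma (specifically \cite[Proposition~6.1]{dm1102.2957}) then outputs exactly the datum \eqref{eq:pre_perturb_htm3}, with the small differential picking up $d_X\otimes 1$ and $\sigma_\infty=\sum_{m\ge 0}(-1)^m(h\mu)^m\sigma$ involving the full $\mu$ as stated. Your convergence argument still applies, since $h$ raises the $\ZZ$-degree on $\bigwedge F$ while $\mu$ lowers it (the $d_X\otimes 1$ piece leaves it fixed and $\delta_-$ lowers it), so $(h\mu)^m$ vanishes for $m>n$.
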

\begin{proof}
First let us note that the statement makes sense: the differential $d_X \otimes 1$ on $X \otimes R/(\bb)$ squares to multiplication by $V + W$, since $d_X^2 = V \cdot 1_X$ and $W$ acts as zero on $R/(\bb)$. We fix a homogeneous $R$-basis of $X$ and use it to extend $h, \sigma$ (with Koszul signs in the former case) to $S$-linear maps on and between $X \otimes \bigwedge F$ and $X \otimes R/(\bb)$. Then the conditions in (\ref{eq:homotopycond}) amount to the statement that we have a deformation retract datum of linear factorisations \textsl{of zero} over $S$, 
\[
\xymatrix@C+2pc{
(X \otimes R/(\bb), 0) \ar@<-0.8ex>[r]_{1 \otimes \sigma} & (X \otimes \bigwedge F, 1 \otimes \delta_+), \ar@<-0.8ex>[l]_{\pi}
} \quad - 1 \otimes h \, .
\]
Consider the perturbation $\mu$ of the differential on $X \otimes \bigwedge F$. The hypotheses of \cite[Proposition 6.1]{dm1102.2957} are easily checked, and the conclusion is that the desired linear factorisation exists.
\end{proof}

\begin{remark}\label{remark:splitexactsequenceskos}
Suppose that $R$ and $R/(\bb)$ are projective $S$-modules. Then the sequence of $R$-modules
\begin{equation}\label{eq:koszulexactseq}
\xymatrix{
0 \ar[r] & {\bigwedge}^n F \ar[r] & \cdots \ar[r] & {\bigwedge}^0 F = R \ar[r]^-{\pi} & R/(\bb) \ar[r] & 0
}
\end{equation}
is not just exact over $S$, but is even \textsl{split} exact. From the numerous short split exact sequences we construct a map $\sigma$ and homotopy $h$ satisfying (\ref{eq:homotopycond}).
\end{remark}

As has already been mentioned, the next corollary (with a different proof) is due to Dyckerhoff \cite{d0904.4713}. There are related results in the literature on link homology, see \cite[Section 3.2]{r0607544} and \cite[Section 1.2]{w0610650}.

\begin{corollary} If $R$ contains a field then $\pi: \{\bs{a},\bs{b}\} \lto R/(\bb)$ is a stabilisation. That is, for any finite-rank matrix factorisation $Y$ of $W$ the map (\ref{eq:app_stab1}) is a quasi-isomorphism.
\end{corollary}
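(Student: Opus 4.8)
The plan is to deduce the corollary from Proposition~\ref{prop:pertapptechnical} by choosing the auxiliary data cleverly. First I would take $S = k$ to be a field contained in $R$. Then $R$ and the regular quotient $R/(\bb)$ are $k$-vector spaces, hence free, hence projective $S$-modules, so Remark~\ref{remark:splitexactsequenceskos} applies: the split exact Koszul resolution \eqref{eq:koszulexactseq} furnishes an $S$-linear section $\sigma$ and an $S$-linear homotopy $h$ on $\bigwedge F$ satisfying the contraction identities \eqref{eq:homotopycond}. This is precisely the set-up in which Proposition~\ref{prop:pertapptechnical} is available.

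Given a finite-rank matrix factorisation $Y$ of $W$ over $R$, I would apply Proposition~\ref{prop:pertapptechnical} with $X = Y\mdual$, the dual factorisation, which is a finite-rank matrix factorisation of $V := -W$. The reason for this choice is that $V + W = 0$, which trivially lies in the image of the structure morphism $S \lto R$, so the hypothesis on the potentials holds. The proposition then produces a deformation retract datum of $\Ztwo$-graded complexes over $k$ between $(Y\mdual \otimes R/(\bb),\, d_{Y\mdual}\otimes 1)$ and $(Y\mdual \otimes \bigwedge F,\, 1\otimes\delta_+ + \mu)$, with $\mu = d_{Y\mdual}\otimes 1 + 1\otimes\delta_-$ and with structure maps $\sigma_\infty$ and $1\otimes\pi$. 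In particular $1\otimes\pi$ is a homotopy equivalence with homotopy inverse $\sigma_\infty$, and a fortiori a quasi-isomorphism.

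It then remains to translate this statement back through the Hom--tensor isomorphism. By inspection of the differentials, the complex $(Y\mdual \otimes \bigwedge F,\, 1\otimes\delta_+ + \mu)$ is exactly $Y\mdual \otimes \{\ba,\bb\}$, and $(Y\mdual \otimes R/(\bb),\, d_{Y\mdual}\otimes 1)$ is $Y\mdual \otimes R/(\bb)$, using that $W$ annihilates $R/(\bb)$. Lemma~\ref{lemma:homdual} identifies these with $\Hom_R(Y, \{\ba,\bb\})$ and $\Hom_R(Y, R/(\bb))$ via the natural isomorphism $\xi$, and naturality of $\xi$ in its second argument applied to $\pi \colon \{\ba,\bb\} \lto R/(\bb)$ shows that under these identifications $1\otimes\pi$ corresponds to $\Hom(1,\pi)$. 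Since $1\otimes\pi$ is a quasi-isomorphism, so is $\Hom(1,\pi)$, which is the claim.

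There is no real obstacle beyond organising this reduction: the substantive perturbation-theoretic input is already packaged in Proposition~\ref{prop:pertapptechnical}, and the only points needing care are (i)~observing that over $S=k$ everything in sight is a $k$-module, so Remark~\ref{remark:splitexactsequenceskos} yields the required contraction data $(\sigma,h)$, and (ii)~the naturality check for $\xi$, which is routine bookkeeping with Koszul signs. The one place where a misstep is easy is keeping straight that a deformation retract datum over $S$ still gives a genuine homotopy equivalence of underlying complexes, so that "quasi-isomorphism" is unambiguous.
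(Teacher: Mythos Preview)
Your proposal is correct and follows essentially the same approach as the paper: take $S$ to be a field contained in $R$, invoke Remark~\ref{remark:splitexactsequenceskos} to obtain the contraction data, apply Proposition~\ref{prop:pertapptechnical} with $X = Y\mdual$ and $V = -W$, and then pass from the tensor description to $\Hom$ via the natural isomorphism $\xi$ of Lemma~\ref{lemma:homdual}. The paper's proof is more terse but structurally identical, packaging your last paragraph into the commutative diagram~(\ref{eq:cons3}).
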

\begin{proof}
Let $S$ be a field contained in $R$. Then in light of the previous remark we can find $\sigma, h$ satisfying our hypotheses, and Proposition \ref{prop:pertapptechnical} applied to $X = Y\mdual, V = -W$ shows that the bottom row of the following commutative diagram is a homotopy equivalence over $S$: 
\begin{equation}\label{eq:cons3}
\xymatrix@C+2pc{
\Hom_R(Y, \{\ba, \bb\}) \ar[r]^-{\Hom(1,\pi)} & \Hom_R(Y, R/(\bb))\\
Y^{\lor} \otimes \{ \ba, \bb \} \ar[u]^{\cong} \ar[r]_-{1 \otimes \pi} & Y^{\lor} \otimes R/(\bb) \, . \ar@/^2pc/[l]^-{\sigma_\infty} \ar[u]_{\cong}
}
\end{equation}
But then the top row is also a homotopy equivalence over $S$, hence a quasi-isomorphism over $R$.
\end{proof}

Here our interest splits two ways. In Section \ref{subsection:kernelfunctors} we clarify a point about kernel functors and in Section \ref{subsection:liftingmorphisms} we use the formula for $\sigma_\infty$ given in Proposition \ref{prop:pertapptechnical} to lift morphisms $Y \lto R/(\bb)$ to morphisms $Y \lto \{ \ba, \bb \}$.

\subsection{Kernel functors}\label{subsection:kernelfunctors}

We want to interpret $\{ \bs{a}, \bs{b} \}$ and $R/(\bb)$ as functors, and $\pi$ as a natural isomorphism. To this end, suppose that $k$ is a ring, $S,T$ are $k$-algebras and that our ring $R$ above is equal to $T \otimes_k S$. We view $R$-modules as $T$-$S$-bimodules, and make the following hypotheses:
\begin{itemize}
\item $W = V - U$ for elements $U \in T$ and $V \in S$.
\item $R$ is free as an $S$-module, and $R/(\bb)$ is free of finite rank on both sides, i.\,e.~as a right $S$-module and as a left $T$-module.
\end{itemize}
It follows from Remark \ref{remark:splitexactsequenceskos} that the hypotheses of Proposition \ref{prop:pertapptechnical} are satisfied; thus for any finite-rank matrix factorisation $X$ of $U$ over $T$ the map
\[
\pi: X \otimes \{ \bs{a}, \bs{b} \} \lto X \otimes R/(\bb)
\]
is a homotopy equivalence over $S$. Hence the diagram of functors
\[
\xymatrix@C+2.5pc{
\hmf(T, U) \ar[d]_-{\inc}\ar[r]^-{(-) \otimes R/(\bb)} & \hmf(S,V) \ar[d]^-{\inc}\\
\HMF(T, U) \ar[r]_-{(-) \otimes \{ \bs{a}, \bs{b} \}} & \HMF(S,V)
}
\]
commutes up to the natural isomorphism.

\subsection{Lifting morphisms}\label{subsection:liftingmorphisms}

The techniques work more generally, but for simplicity we specialise to the situation of Section~\ref{section:stabilisation} where $S = \QQ[x_1, x_2], R = \QQ[x_1,x_2,y_1,y_2]$ and $W = y_1^{N+1} + y_2^{N+1} - x_1^{N+1} - x_2^{N+1}$. The cyclic Koszul complex of interest is $\Xcirc = \{ \bs{w}, \bs{t} \}$, where $t_i = y_i - x_i$ and the sequence $\bs{w} = (w_1,w_2)$ is defined in~\eqref{eq:cons1}. This means that we take $\bs{a}=\bs{w}$ and $\bb = \bs{t}$ in the above. 

Since we are interested in morphisms $\Xbul \lto \Xcirc$, in addition we take $Y = \Xbul$. We can write $R$ as $S[t_1,t_2]$ so there is an $S$-linear standard (in the sense of \cite[Section 8.1]{dm1102.2957}) flat connection
\begin{gather*}
\nabla: R \lto R \otimes_{S[\boldsymbol{t}]} \Omega^1_{S[\boldsymbol{t}]/S} \, ,\qquad
\nabla( f(x_1,x_2,y_1,y_2) ) = \frac{\partial f}{\partial y_1} \ud t_1 + \frac{\partial f}{\partial y_2} \ud t_2 \, .
\end{gather*}
This induces an $S$-linear map, also denoted $\nabla$, on $R \otimes_{S[\boldsymbol{t}]} \Omega_{S[\boldsymbol{t}]/S}$ where $\Omega = \bigwedge^\bullet \Omega^1$. We identify this free $\mathds{Z}$-graded $R$-module with $\bigwedge F$ via $\ud t_i = \theta_i$. It is easily checked that for $\theta = \theta_{i_1} \ldots \theta_{i_p}$ with distinct indices $i_j$ and $p > 0, f \in R$,
\[
(\nabla \delta_+ + \delta_+ \nabla)(f \cdot \theta) = (p + \delta_{+}\nabla)(f) \cdot \theta \, ,
\]
and moreover the $S$-linear operator $p \cdot 1_R + \delta_{+} \nabla$ on $R$ is invertible. It will be convenient to have notation for the inverse operator on $R$.

\begin{definition}\label{defn:omega} For $p > 0$ we write $\Omega_p = (p \cdot 1_R + \delta_{+} \nabla)^{-1}$. So by definition $\Omega_p(f)$ is the unique polynomial solution of the partial differential equation in an unknown $z$,
\[
p z + \frac{\partial z}{\partial y_1} (y_1 - x_1) + \frac{\partial z}{\partial y_2}(y_2 - x_2) = f \, .
\]
\end{definition}

\begin{example}\label{example:compomega} For a monomial $f = x_1^{c_1} x_2^{c_2} t_1^{d_1} t_2^{d_2}$ we have by direct computation $\Omega_p(f) = \frac{1}{p + d_1 + d_2} f$. Hence in particular
\begin{itemize}
\item[(i)] If $f \in k[x_1,x_2]$ then $\Omega_p(f) = \frac{1}{p} f$,
\item[(ii)] $\Omega_p(y_i) = \Omega_p(x_i + t_i) = \frac{1}{p(p+1)} x_i + \frac{1}{p+1} y_i$.
\end{itemize}
\end{example}

One defines an $S$-linear map
\[
H = (\nabla \delta_+ + \delta_+ \nabla)^{-1} \nabla
\]
of degree $-1$ on the module $\bigwedge F$. Each application of $H$ involves differentiation with respect to the $y_i$, and then solving a differential equation. Let $\sigma: S = R/(\boldsymbol{t}) \lto \bigwedge F$ be the inclusion of $\QQ[x_1,x_2]$ into $R$. One checks (see \cite[Section 8.1]{dm1102.2957}) that $H \delta_+ + \delta_+ H = 1 - \sigma \pi$, so that $\sigma$ is exhibited by $H$ as the $S$-linear homotopy inverse to the morphism $\pi: (\bigwedge F, \delta_+) \lto (R/(\boldsymbol{t}),0)$.

As explained above, we tensor with $Y\mdual$ and perturb in the other differentials. The first step is to extend $H$ and $\sigma$ to $S$-linear maps
\[
\xymatrix@C+2pc{
Y\mdual \otimes \bigwedge F \ar@(dl,dr)[]_{H} & Y\mdual \otimes R/(\boldsymbol{t}) \ar[l]_{\sigma}
}
\]
using the homogeneous basis $1^*, \theta_1^*, \theta_2^*, (\theta_1\theta_2)^*$ for $Y\mdual$. This involves Koszul signs as we move~$H$ and~$\sigma$ past homogeneous elements of $Y\mdual$, for example, $H(\theta_1^* \otimes f \theta_2) = - \theta_1^* \otimes H(f \theta_2)$. 

Next we consider the $R$-linear map
\[
\mu = d_{Y\mdual} \otimes 1 + 1 \otimes \delta_{-}
\]
on $Y\mdual \otimes \bigwedge F$, viewed as a perturbation of the differential on $(Y\mdual \otimes \bigwedge F, 1 \otimes \delta_{+})$. The total differential $\mu + 1 \otimes \delta_{+}$ is the usual differential on the tensor product, and the perturbation lemma tells us that the $S$-linear map $\sigma_\infty = \sum_{m \ge 0} (-1)^m (H \mu)^m \sigma$ is an $S$-homotopy inverse to $1 \otimes \pi$. Composing with the vertical isomorphisms in (\ref{eq:cons3}) we have the desired inverse of $\Hom(1,\pi)$. The morphism (\ref{eq:cons4}) corresponds to the tensor $1^* \otimes 1 \in Y\mdual \otimes R/(\boldsymbol{t})$, so to find $\chi_1$ we are left with the task of evaluating
\[
\sigma_\infty(1^* \otimes 1) = \sigma(1^* \otimes 1) - H \mu \sigma(1^* \otimes 1) + H \mu H \mu(1^* \otimes 1) \, .
\]

\begin{proof}[Proof of Lemma \ref{lemma:consfirst}]
The map corresponding to $\sigma_\infty(1^* \otimes 1)$ makes (\ref{eq:cons5}) commute, so we need to evaluate this tensor and show that it has the explicit form given in the statement of the lemma. The notation becomes a little involved: throughout recall that we are producing tensors in $Y\mdual \otimes \bigwedge F$, an $R$-basis of which is given by tensors $\theta_1^* \otimes 1, 1^* \otimes \theta_1\theta_2$, etc. Clearly $\sigma(1^* \otimes 1)$ is just $1^* \otimes 1$, and
\begin{align*}
H \mu \sigma(1^* \otimes 1) &= H( d_{Y\mdual} + \delta_{-})(1^* \otimes 1)\\
&= H( -\theta_1^* \otimes s_1 - \theta_2^* \otimes s_2 + 1^* \otimes w_1 \theta_1 + 1^* \otimes w_2 \theta_2 )\\
&= \theta_1^* \otimes H(s_1) + \theta_2^* \otimes H(s_2) + 1^* \otimes H(w_1 \theta_1) + 1^* \otimes H(w_2 \theta_2) \, .
\end{align*}
By easy computations, for example
\begin{align*}
H(s_1) &= \Omega_1\Big( \frac{\partial s_1}{\partial y_1} \Big) \theta_1 + \Omega_1\Big( \frac{\partial s_1}{\partial y_2} \Big) \theta_2 = \Omega_1(1) \theta_1 + \Omega_1(1) \theta_2 = \theta_1 + \theta_2 \, ,
\end{align*}
and
\begin{align*}
H(w_1 \theta_1) &= (\nabla \delta_{+} + \delta_{+} \nabla)^{-1} \nabla( w_1 \theta_1 ) = (\nabla \delta_{+} + \delta_{+} \nabla)^{-1}\Big( \frac{\partial w_1}{\partial y_2} \theta_2 \theta_1 \Big) = 0
\end{align*}
we find that
$$
H \mu \sigma(1^* \otimes 1) = \theta_1^* \otimes (\theta_1 + \theta_2) + \frac{1}{2}\theta_2^* \otimes ( (x_2 + y_2) \theta_1 + (x_1 + y_1) \theta_2) \, .
$$
Applying $H\mu$ again we obtain after some work an expression for $H\mu H\mu\sigma(1^* \otimes 1)$, which we combine with the previous equation to find
\begin{align*}
\sigma_\infty(1^* \otimes 1) &= 1^* \otimes 1 + 1^* \otimes \Omega_2\Big( \frac{\partial u_1}{\partial y_1} - \frac{\partial u_1}{\partial y_2} - \frac{1}{2} \frac{\partial u_2}{\partial y_2}(x_2 + y_2) + \frac{1}{2} \frac{\partial u_2}{\partial y_1}(x_1 + y_1) \Big)\theta_1 \theta_2  \\
& \qquad+  \frac{1}{2}(\theta_1 \theta_2)^* \otimes (x_1 + y_1 - x_2 - y_2) \theta_1 \theta_2  \\
& \qquad- \theta_1^* \otimes (\theta_1 + \theta_2) - \frac{1}{2} \theta_2^* \otimes ( (x_2+y_2)\theta_1 + (x_1+y_1)\theta_2 ) \, . 
\end{align*}
The image under the canonical isomorphism $\xi: \Xbul\mdual \otimes \Xcirc \lto \Hom_R(\Xbul, \Xcirc)$ is our morphism $\chi_1 := \xi \sigma_\infty(1^* \otimes 1)$, which has the desired form (the signs get corrected by Koszul signs in $\xi$).
\end{proof}

\newcommand{\etalchar}[1]{$^{#1}$}
\providecommand{\href}[2]{#2}

\end{document}